\newtheorem{thm}{Theorem}[section]
\newtheorem{cor}{Corollary}
\newtheorem{lem}[thm]{Lemma}
\newtheorem{prop}{Proposition}
\theoremstyle{definition}
\newtheorem{defn}[thm]{Definition}
\newtheorem{rem}{Remark}
\newtheorem{example}[thm]{Example}
\newcommand{\hilf}{\mathcal{H}}   
\newcommand{\R}{\mathbb{R}}   
\newcommand{\A}{\mathcal{A}}
\newcommand{\h}{\mathcal{H}}
\def\a{\alpha}
\def\o{\omega}
\title [Indirect stabilization with hybrid boundary conditions] 
{Indirect stabilization of weakly coupled systems with hybrid boundary conditions}
\author[F. Alabau-Boussouira, P. Cannarsa and R. Guglielmi]{}
\subjclass{93D15, 35L53, 47D06, 46B70.}
 \keywords{indirect stabilization, energy estimates, interpolation spaces, evolution equations, hyperbolic systems.}
 \email{alabau@univ-metz.fr}
 \email{cannarsa@mat.uniroma2.it}
 \email{guglielm@mat.uniroma2.it}
\thanks{The authors wish to thank Institut Henri Poincar\'e (Paris, France) for providing a very stimulating environment during the "Control of Partial and Differential Equations and Applications" program in the Fall 2010. Part of this work was completed during the C.I.M.E. course `Control of partial differential equations' (Cetraro, July 19--23, 2010). This research has been performed in the framework of the GDRE CONEDP, see \mbox{\url{http://www.ceremade.dauphine.fr/~glass/GDRE/index.php}}}
\begin{document}
\maketitle

\centerline{\scshape Fatiha Alabau-Boussouira}
\medskip
{\footnotesize
 \centerline{Present position D\'el\'egation CNRS at MAPMO, UMR 6628.}
   \centerline{ Current position Universit\'e Paul Verlaine-Metz,}
   \centerline{ Ile du Saulcy, 57045 Metz Cedex 1, France}
} 

\medskip

\centerline{\scshape Piermarco Cannarsa and Roberto Guglielmi}
\medskip
{\footnotesize
 \centerline{Dipartimento di Matematica - Universit\`a di Roma ``Tor Vergata''}
   \centerline{Via della Ricerca Scientifica 1 - 00133 Roma, ITALY}
}

\bigskip

 \centerline{(Communicated by the associate editor name)}

\begin{abstract}
We investigate stability properties of indirectly damped systems of evolution equations in Hilbert spaces, under new compatibility assumptions. We prove polynomial decay for the energy of solutions and optimize our results by interpolation techniques, obtaining a full range of power-like decay rates. In particular, we give explicit estimates with respect to the initial data. We discuss several applications to hyperbolic systems with {\em hybrid} boundary conditions, including the coupling of two wave equations subject to Dirichlet and Robin type boundary conditions, respectively.
\end{abstract}

\section{Introduction}\label{se:intro}

There is no doubt that the interest of the scientific community in the stabilization and control of systems of partial differential equations has remarkably increased, in recent years. This is probably due to the fact that such systems arise in several applied mathematical models, such as those used for studying the vibrations of flexible structures and networks (see \cite{DZ} and references therein), or fluids and fluid-structure interactions (see, for instance, \cite{AT}, \cite{ALT}, \cite{BO}, \cite{IT}, \cite{RV}, \cite{ZZ}).

When dealing with systems involving quantities described by several components, pretending to control or observe all the state variables might be irrealistic. In applications to mathematical models for the vibrations of flexible structures (see \cite{A1} and \cite{AMR}), electromagnetism (see, for instance, \cite{Ka}), or fluid control (see \cite{CG} and the references therein), it may happen that only part of such components can be observed. This is why it becomes essential to study whether controlling only a reduced number of state variables suffices  to ensure the stability of the full system. 

It turns out that certain systems possess an internal structure that compensates for the aforementioned lack of control variables. Such a phenomenon is referred to as {\em indirect stabilization} or {\em indirect control} (see \cite{Ru}). 
An example of indirect stabilization occurs with the hyperbolic system
\begin{equation}\label{IS}
\begin{cases}
\partial_t ^2u-\Delta  u + \partial_t u+\a v=0
&\mbox{in}\quad\Omega\times\R
\\
\partial_t ^2v -\Delta  v +\a u=0
& \mbox{in}\quad\Omega\times\R
\\
u =0=v & \mbox{on}\quad\partial\Omega\times\R\,,
\end{cases}
\end{equation}
where $\Omega$ is a bounded open domain of $\mathbb{R}^N$, and the `frictional' term
 $\partial_t u$ acts as a stabilizer. Indeed, a general result proved in \cite{alcako02} ensures that, for sufficiently smooth initial conditions and $|\a|>0$ small enough, the energy of the solution $(u,v)$ of \eqref{IS} decays to zero at a polynomial rate as $t\to\infty$.
 
The above indirect stabilization property holds true for more general systems of partial differential equations, under the compatibility assumption \eqref{theta2a} below, see \cite{alcako02}. For applications to problems in mechanical engineering, however, it is extremely important to consider also boundary conditions that fail to satisfy the assumption of  \cite{alcako02}. This is the case of Neumann or Robin boundary conditions, which describe different physical situations such as hinged or clamped devices.   
For instance, let us change the boundary conditions in \eqref{IS} as follows:
\begin{equation}\label{IS1}
\begin{cases}
\partial_t ^2u-\Delta  u + \partial_t u+\a v=0
&\mbox{in}\quad\Omega\times\R
\\
\partial_t ^2v -\Delta  v +\a u=0
& \mbox{in}\quad\Omega\times\R
\\
u + \frac{\partial u}{\partial \nu} =0=v & \mbox{on}\quad\partial\Omega\times\R\, .
\end{cases}
\end{equation}
Then, as is shown in Proposition \ref{compcondns} below, the compatibility assumption \eqref{theta2a} is not satisfied. Nevertheless, in this paper we will prove polynomial stability for system \eqref{IS1}, using a new hypothesis which is specially designed to handle boundary conditions as above---that we call {\em hybrid}. 

More generally, in a real Hilbert space $H$, with scalar product $\langle\cdot,\cdot\rangle$ and norm $|\,\cdot\,|$, we shall study the system of evolution equations
\begin{equation}\label{eq:1z}
\begin{cases}
u''(t)+A_1u(t)+Bu'(t)+\a v(t)=0
\\
v''(t)+A_2v(t)+\a u(t)=0
\end{cases}
\end{equation}
where
\begin{enumerate}
\item [(H1)]
 $A_i:D(A_i)\subset H\to H\;(i=1,2)$ are densely defined closed linear operators such that
 \begin{eqnarray*}
A_i=A_i^*\,,\qquad
\langle A_iu,u\rangle\geq \omega_i |u|^2\qquad\forall u\in D(A_i)
\end{eqnarray*}
for some $\omega_1,\omega_2>0$,
\item[(H2)]
$B$ is a bounded linear operator on $H$ such that
 \begin{eqnarray*}
B=B^*\,,\qquad
\langle Bu,u\rangle\geq \beta |u|^2\qquad\forall u\in H
\end{eqnarray*}
for some $\beta>0$,
\item[(H3)]
$\a$ is a real number such that
$$
0<|\a|<\sqrt{\o_1\o_2}\,.
$$
\end{enumerate}
System (\ref{eq:1z}), with the initial conditions
\begin{equation}\label{eq:icz}
\left\{\begin{array}{ll}
u(0)=u^0\,,&\quad u'(0)=u^1\, ,
\\
v(0)=v^0\,,&\quad v'(0)=v^1\,,
\end{array}\right.
\end{equation}
can be formulated as a Cauchy problem for a certain first order evolution equation in the product space
$$
\hilf:=D(A_1^{1/2})\times H\times D(A_2^{1/2})\times H \,.
$$
More precisely, let us define the energies associated to operators $A_1,A_2$ by 
\begin{equation}\label{eq:enercompsz}
E_i(u,p)=\frac12 \left(|A_i^{1/2}u|^2+|p|^2\right)\quad \forall (u,p)\in
D(A_i^{1/2})\times H\ (i=1,2)\,,
\end{equation}
and the total energy of the system as
\begin{equation}
\label{calEz}
{\mathcal E}(U):=E_1(u,p)+E_2(v,q)+\a \langle u,v\rangle
\end{equation}
for every $U = \left(u,p,v,q\right)\in\hilf$. Then, assumption (H1) yields, for $i=1,2$,
\begin{equation}\label{2.4z}
|u|^2\le \frac2{\omega_i}\,E_i(u,p)\qquad\forall u\in D(A_i^{1/2}),
\;\forall p \in H\,.
\end{equation}
Moreover, in view of $(H3)$, for all $U=(u,p,v,q)\in \h$
\begin{equation}\label{2.6z}
{\mathcal E}(U)\ge
\nu(\a)\Big[E_1(u,p)+E_2(v,q)\Big]\, ,
\end{equation} 
where $\nu(\a)=1-|\a|(\omega_1\omega_2)^{-1/2}>0$.

Let us introduce the bilinear form on ${\mathcal H}$
$$
(U|\widehat U)=\langle A_1^{1/2}u,A_1^{1/2}\widehat u\rangle+
\langle p,\widehat p\rangle+
\langle A_2^{1/2}v,A_2^{1/2}\widehat v\rangle+
\langle q,\widehat q\rangle
 + \a \langle u,\widehat v\rangle + \a \langle v,\widehat u\rangle\,.
$$
Since
\begin{equation*}
(U|U) = 2{\mathcal E}(U)\quad \forall U\in {\mathcal H} \, ,
\end{equation*}
 thanks to \eqref{2.6z} the above  form is a scalar product on
${\mathcal H}$, and ${\mathcal H}$ is a Hilbert space with such a product.

Let now ${\mathcal A}:D({\mathcal A})\subset\hilf\to 
\hilf$ be the operator defined by
$$
\begin{cases}
D({\mathcal A})=D(A_1)\times D(A_1^{1/2})\times D(A_2)\times D(A_2^{1/2})
&
\\
{\mathcal A} U=
\left(~p\,,\,-A_1u-Bp-\a v\,,\,q\,,-A_2v-\a u~\right)
\qquad\forall
U\in D({\mathcal A})
\,.
\end{cases}
$$
Then, problem (\ref{eq:1z}) takes the equivalent form
\begin{equation}\label{eq:2z}
\begin{cases}
U'(t)={\mathcal A} U(t)
&
\\
U(0)=U_0:=(u^0,u^1,v^0,v^1)\,.
\end{cases}
\end{equation}
As will be proved in Lemma~\ref{leminvtdiopa}, ${\mathcal A}$ is a maximal dissipative operator. Then, from classical results (see, for instance, \cite{Pa}), it follows that ${\mathcal A}$ gene\-ra\-tes a 
$C_0$-semigroup, $e^{t{\mathcal A} }$, on ${\mathcal H}$. Also,
$$ e^{t{\mathcal A} }U_0 =(u(t),p(t),v(t),q(t))\,,$$
where $(u,v)$ is the solution of problem (\ref{eq:1z})-(\ref{eq:icz}), and $(p,q)=(u',v')$. 

In order to introduce our asymptotic analysis of system \eqref{eq:1z}-\eqref{eq:icz}---or, equi\-va\-lently, \eqref{eq:2z}---let us observe that, as is explained in \cite{alcako02}, no exponential stability can be expected. Therefore, weaker decay rates at infinity, such as polynomial ones, are to be sought for. Polynomial decay results for \eqref{eq:1z} were obtained in \cite{alcako02} assuming that, for some integer $j\ge 2$,
\begin{equation}\label{theta2a}
|A_1u|\le c|A_2^{j/2}u|
\qquad \forall u\in D(A_2^{j/2})\,.
\end{equation} 
Similar decay estimates for the case of boundary damping  (that is, when operator $B$ is unbounded) were derived in \cite{A2}. Also, we refer the reader to \cite{Be1}, \cite{Be2} and \cite{Youssef} for indirect stabilization with localized damping, and to \cite{BA} for the study of  a one-dimensional wave system coupled through velocities. 

The asymptotic behavior of wave-like equations and, in particular, the derivation of optimal decay rates for the energy when the geometry of the domain and damping region allow rays to be trapped, have been intensively studied for several decades. For such questions and results, we refer the reader to Lebeau~\cite{lebeauamorties} and Burq~\cite{burq} (and the references therein). In~\cite{lebeauamorties}, Lebeau considered a locally damped wave equation and proved optimal logarithmic decay rates for the energy, provided that  damping is active on a nonempty open set. The proof relies on optimal resolvent estimates for the corresponding infinitesimal generator of the associated semigroup. Later on these results were completed by Burq in~\cite{burq} in exterior domains, in particular for cases in which rays may be trapped  by the obstacle.

Independently, indirect stabilization for symmetric hyperbolic systems was first considered by the first author in~\cite{A}, and further developed in~\cite{A2,alcako02, alleau11}, using energy type methods, together with some new ideas such as the new integral inequality given in Theorem~\ref{Theorem1.1} (see~\cite{A, alcako02}). In this approach, the purpose is rather to focus on the properties of the data---that is, the operators $A_1, A_2, B$ and the coupling operator---that allow to transfer the damping action of the feedback to the undamped equation.

Subsequently, 
indirect stabilization of coupled systems was investigated in  \cite{BEPS} and \cite{LR}. In \cite{BEPS},  resolvent estimates were obtained and  spectral analysis was used to prove polynomial decay for \eqref{eq:1z}, covering some of the examples treated in \cite{alcako02}. In~\cite{LR}, where a Riesz basis approach is followed,   polynomial decay rates for the energy were derived for a simplified case of coupled system, where operators $A_1$ and $A_2$ are supposed to be equal (to $A$) and the damping operator is a nonpositive fractional power of $A$.

More recently,  inspired by \cite{lebeauamorties}
and  \cite{burq}, and, through \cite{BEPS}, by \cite{A,A2, alcako02}, the optimality of spectral-analysis-derived decay rates was shown in \cite{BD} and \cite{BT}, taking into account the asymptotic behaviour of the resolvent on the imaginary axis. 

In the context of indirect stabilization for coupled systems, we would like to stress the fact that checking  the assumptions on the data---$A_1,A_2,B$ and the coupling operator---that are needed to ensure decay, may be a difficult task. In particular, resolvent estimates  may be hard to obtain when $A_1$ and $A_2$ do not commute, or  damping and  coupling operators do not commute with $A_1$ and $A_2$. For results in this direction
we refer the reader to~\cite{A, A2}. The case of localized or boundary damping, together with localized coupling, is analyzed
in \cite{alleau11}, where $A_1=A_2=A$, but $B$ and the coupling operator do not commute with $A$. Moreover, since  coupling is localized, the corresponding operator is no longer coercive. This fact  generates additional difficulties.

In this paper, we will replace \eqref{theta2a} by
\begin{equation}\label{eq:thetab}
D(A_2)\subset D(A_1^{1/2})\qquad\mbox{and}\qquad|A_1^{1/2}u|\le c|A_2u|
\quad \forall u\in D(A_2),
\end{equation}
which is satisfied by a large class of systems  including \eqref{IS1} as a special case (see Section~\ref{se:applications} below). Under such a condition we will show that any solution $U$ of \eqref{eq:2z} satisfies the integral inequality
\begin{equation}\label{eq:main0z}
\int_{0}^{T}{\mathcal E}(U(t))dt
\leq c_1 \sum_{k=0}^{4}{\mathcal E}(U^{(k)}(0))
\qquad \forall\, T>0\, ,\ U_0\in D(\mathcal{A}^4)\, .
\end{equation}
Moreover, since the energy of solutions is decreasing in time, 
\eqref{eq:main0z} implies, in turn, the polynomial decay of order $n$ of ${\mathcal E}$, that is,
\begin{equation}\label{eq:maindz}
{\mathcal E}(U(t))
\le \frac{c_n}{t^n}\sum_{k=0}^{4n}{\mathcal E}(U^{(k)}(0))
\qquad \forall t>0
\end{equation}
for all $n\ge 1$ and $U_0\in D(\mathcal{A}^{4n})$ (see Corollary~\ref{co:main1} below). Notice that \eqref{eq:maindz} yields, in particular, the strong stability of $e^{t\mathcal{A}}$.

The compatibility condition~\eqref{eq:thetab} is equivalent to the boundedness of $A_1^{1/2}A_2^{-1}$. Let us point out that this hypothesis is sufficient but not necessary. Such a fact can be observed taking, for example,  $A_2=A_1^{\tau}$ with $\tau \in (0,1/2)$. In this case,  condition \eqref{eq:thetab} is violated, but it is easy to check that  condition~\eqref{theta2a} holds for the smallest integer $j$ such that $j > 2/\tau$. On the other hand, condition \eqref{eq:thetab} is satisfied for all $\tau \ge 1/2$.
This example shows that the present results and those of~\cite{alcako02} are in some sense complementary ---and, for $A_2=A_1^{\tau}$,($\tau \geq 0$) exactly complementary. 
One should also note that, for general operators
$A_1$ and $A_2$, the two compatibility conditions \eqref{theta2a}  and  \eqref{eq:thetab} do not cover all  possible cases.

Passing from polynomial to a general power-like decay estimate is quite natural, once \eqref{eq:maindz} has been established.  Indeed, in Section \ref{se:intres}, using {\em interpolation theory}, we obtain the fractional decay rate
\begin{equation}\label{eq:maind*z}
{\mathcal E}(U(t))
\le \frac{c_n}{t^{n/4}}\sum_{k=0}^{n}{\mathcal E}(U^{(k)}(0))
\qquad \forall t>0
\end{equation}
for all $n\ge 1$ and $U_0\in D(\mathcal{A}^n)$ (see Corollary \ref{th:main*} below). Moreover, taking initial data in $\big(\mathcal{H},D(\mathcal{A}^n)\big)_{\theta,2}$ for any $0 < \theta < 1$, 
we deduce the continuous decay rate
\begin{equation}\label{eq:maind2*z}
\|U(t)\|_{\mathcal{H}}^2
\le \frac{c_{n,\theta}}{t^{n\theta/4}}\|U_0\|_{(\mathcal{H},D(\mathcal{A}^n))_{\theta,2}}^2
\qquad \forall t>0\, .
\end{equation}
Notice that a somewhat comparable result is obtained in~\cite[Proposition~3.1] {BEPS} using a different technique. 
 In particular, for $n=1$, \eqref{eq:maind*z} implies that, for every
$
U_0\in D({\mathcal A})\, ,
$
the solution $U$ of problem (\ref{eq:2z}) satisfies 
\begin{equation}\label{eq:maind3z}
E_1(u(t),u'(t)) + E_2(v(t),v'(t))
\le \frac{c}{t^{1/4}} \|U_0\|_{D({\mathcal A})}^2\qquad \forall t>0\, ,
\end{equation}
and there exists $c_1 > 0$ such that
$$
\|U_0\|_{D({\mathcal A})}^2 \le c_1\left(|A_1 u^0|^2 + |A_1^{1/2}u^1|^2 + |A_2 v^0|^2 + |A_2^{1/2}v^1|^2\right)\, .
$$

Thus, interpolation theory applied to systems satisfying \eqref{eq:thetab} allows  to prove continuous energy decay rates, together with decay rates under explicit smoothness conditions on the initial data. Furthermore, we would like to point out that it also yields stronger results in the framework studied in \cite{alcako02}, that is, under  condition \eqref{theta2a}. We describe such applications in Section \ref{impr}, where we show how to deduce power-like decay rates from the energy estimates of \cite{alcako02}, thus recovering, in a more general set-up, related asymptotic estimates that can be obtained by spectral analysis.

Let us now mention some open questions.
One interesting problem is to derive optimal  decay rates for the energy of an indirectly damped coupled system in geometric situations for which trapped rays may exist for the uncoupled damped equation. More precisely, it would be very interesting to generalize Lebeau's resolvent analysis in \cite{lebeauamorties} to such coupled systems obtaining optimal energy estimates. In a somewhat different spirit, another open question would be to determine if it is possible to  combine the results of \cite{lebeauamorties} and \cite{burq} with the techniques developed in~\cite{A2, alcako02, alleau11} in order to derive sharp upper decay rates for the energy. In all the examples we discuss in the present work---as well as in  \cite{A, A2, alcako02, alleau11}---operators $A_1$ and $A_2$ happen  to have compact resolvents. It would be very interesting to see if explicit energy decay rates can be derived in different situations. For instance, it would be nice to extend Burq's approach~\cite{burq} in order to obtain indirect damping of coupled systems in exterior domains, and prove decay of the local total energy of  solutions.

This paper is organized as follows. Section \ref{se:preliminaries} recalls preliminary notions, mainly related to interpolation theory which is so relevant for most of  this paper. Section \ref{se:main} is devoted to our polynomial decay result and its proof. In Section \ref{se:intres}, we complete the analysis with estimates in interpolation spaces. In Section \ref{se:applications}, we describe several applications to systems of partial differential operators. Finally, in Section \ref{impr}, we show how to improve the results of \cite{alcako02} by interpolation.

\section{Preliminaries}\label{se:preliminaries}

In this section, we introduce the main tools required to deal with interpolation theory between Banach spaces. For a general exposition of this theory the reader is referred to \cite{Trie} and \cite{Lu2}. Interesting introductions are also given in \cite{BPDM} from the point of view of control theory, and \cite{Lu} for the specific case of analytic semigroups.

In this section $(X,|\,\cdot\, |_X)$ stands for a real Banach space. Let $(Y,|\,\cdot\, |_Y)$ be another Banach space. We say that $Y$ is continuously embedded into $X$, and we write 
$Y \hookrightarrow X$, if $Y\subset X$ and
\begin{equation*}
|x|_X\le c|x|_Y\qquad\forall x\in Y
\end{equation*}
for some constant $c> 0$.

We denote by $\mathcal{L}(Y;X)$ the Banach space of all bounded linear operators $T:Y\to X$ equipped with the standard operator norm. If $Y=X$, we refer to such a space as $\mathcal{L}(X)$. For any given subspace $D$ of $X$, we denote by $T_{|D}$ the restriction of $T$ to $D$.
\begin{defn}
Let $(D,|\,\cdot\, |_D)$ be a closed subspace of $X$. A subspace $(Y,|\,\cdot\, |_Y)$ of $X$ is said to be an interpolation space between $D$ and $X$ if
\begin{itemize}
\item[(a)] $D\hookrightarrow Y \hookrightarrow X$, and
\item[(b)] for every $T\in\mathcal{L}(X)$ such that $T_{|D}\in\mathcal{L}(D)$, we have that $T_{|Y}\in\mathcal{L}(Y)$.
\end{itemize}
\end{defn}
Let $X$, $D$ be Banach spaces, with $D$ continuously embedded into $X$. For any $\a\in [0,1]$, we denote by $J_\a(X,D)$ the family of all subspaces $Y$ of $X$
containing $D$ such that  
$$
|x|_Y\le c|x|_D^\a \,|x|_X^{1 - \a} \quad\forall x\in D
$$
for some constant $c>0$. 

Let us introduce, for each $x\in X$ and $t>0$, the quantity
\begin{equation}
K(t,x,X,D) := \inf_{x = a+b, \atop a\in X,\, b\in D}(|a|_X + t|b|_D)\, .
\end{equation}
Let $0 < \theta < 1$ be fixed. We define
\begin{equation}
(X,D)_{\theta,2} := \left\{x\in X : \int_0^{+\infty}|t^{-\theta - \frac{1}{2}} K(t,x,X,D)|^2\, dt < +\infty\right\}
\end{equation}
and
$$
|x|^2_{\theta,2} := \int_0^{+\infty}|t^{-\theta - \frac{1}{2}} K(t,x,X,D)|^2\, dt\, .
$$
The space $(X,D)_{\theta,2}$, endowed with the norm $|\,\cdot\,|_{\theta,2}$, is a Banach space.
\indent The reader is referred to \cite{Lu2} for the proof of the following results.

\begin{thm}\label{prop:intreswes}
Let $X_1$, $X_2$, $D_1$, $D_2$ be Banach spaces such that $D_i$ is continuously embedded in $X_i$, for $i = 1,\, 2$. If $T\in \mathcal{L}(X_1;X_2)\cap \mathcal{L}(D_1;D_2)$, then we have that $T\in\mathcal{L}((X_1,D_1)_{\theta,2};(X_2,D_2)_{\theta,2})$ for every $\theta\in (0,1)$. Moreover,
$$
\|T\|_{\mathcal{L}((X_1,D_1)_{\theta,2};(X_2,D_2)_{\theta,2})}\le \|T\|_{\mathcal{L}(X_1;X_2)}^{1 - \theta} \,\|T\|_{\mathcal{L}(D_1;D_2)}^\theta\, .
$$
\end{thm}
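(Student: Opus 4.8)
The plan is to reduce the statement to a single scaling identity for the $K$-functional and then to a change of variable in the integral defining the interpolation norm. Throughout, set $M_0:=\|T\|_{\mathcal{L}(X_1;X_2)}$ and $M_1:=\|T\|_{\mathcal{L}(D_1;D_2)}$, and assume first $M_0,M_1>0$ (the degenerate cases $M_0M_1=0$ are disposed of at the end). Fix $x\in(X_1,D_1)_{\theta,2}$. Since $T\in\mathcal{L}(X_1;X_2)$ we have $Tx\in X_2$, so $K(t,Tx,X_2,D_2)$ is well defined for every $t>0$, and it suffices to control this quantity by a rescaled copy of $K(t,x,X_1,D_1)$.

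The key step is the estimate
\begin{equation}\label{eq:Kscaling}
K(t,Tx,X_2,D_2)\le M_0\, K\!\Big(\tfrac{M_1}{M_0}\,t,\, x,\, X_1,\, D_1\Big)\qquad\forall\, t>0.
\end{equation}
To obtain it I would take an arbitrary decomposition $x=a+b$ with $a\in X_1$ and $b\in D_1$. Then $Ta\in X_2$ and, because $T\in\mathcal{L}(D_1;D_2)$, also $Tb\in D_2$, so $Tx=Ta+Tb$ is an admissible decomposition of $Tx$ in the pair $(X_2,D_2)$. Consequently
$$
K(t,Tx,X_2,D_2)\le |Ta|_{X_2}+t\,|Tb|_{D_2}\le M_0\,|a|_{X_1}+t\,M_1\,|b|_{D_1}=M_0\Big(|a|_{X_1}+\tfrac{M_1}{M_0}\,t\,|b|_{D_1}\Big),
$$
and taking the infimum over all decompositions $x=a+b$ yields exactly \eqref{eq:Kscaling}. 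This is the only genuinely new input; everything after is bookkeeping.

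With \eqref{eq:Kscaling} in hand I would insert it into the definition of the interpolation norm and dilate. Writing $|y|_{\theta,2}^2=\int_0^{+\infty}t^{-2\theta-1}K(t,y,\cdot,\cdot)^2\,dt$ and substituting $s=(M_1/M_0)\,t$,
$$
|Tx|_{\theta,2}^2\le M_0^2\int_0^{+\infty}t^{-2\theta-1}K\!\Big(\tfrac{M_1}{M_0}t,x,X_1,D_1\Big)^2 dt=M_0^2\Big(\tfrac{M_1}{M_0}\Big)^{2\theta}\int_0^{+\infty}s^{-2\theta-1}K(s,x,X_1,D_1)^2 ds,
$$
the factor $(M_1/M_0)^{2\theta}$ arising from the homogeneity of $t^{-2\theta-1}\,dt$ under the dilation. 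The remaining integral is precisely $|x|_{\theta,2}^2$, so $|Tx|_{\theta,2}^2\le M_0^{2-2\theta}M_1^{2\theta}\,|x|_{\theta,2}^2$; taking square roots gives $|Tx|_{\theta,2}\le M_0^{1-\theta}M_1^{\theta}\,|x|_{\theta,2}$, which proves both $Tx\in(X_2,D_2)_{\theta,2}$ and the asserted operator-norm bound. For the degenerate cases: if $M_0=0$ then $T=0$ on $X_1\supset(X_1,D_1)_{\theta,2}$ and the inequality is trivial since $0^{1-\theta}=0$ for $\theta<1$; if $M_1=0<M_0$, then \eqref{eq:Kscaling} gives $K(t,Tx,X_2,D_2)\le M_0\,\mathrm{dist}_{X_1}(x,D_1)$, and the standard inclusion $(X_1,D_1)_{\theta,2}\subset\overline{D_1}^{\,X_1}$ forces this distance to vanish, whence $Tx=0$, consistent with $M_0^{1-\theta}M_1^{\theta}=0$.

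I do not expect a deep obstacle here: the entire argument pivots on the scaling estimate \eqref{eq:Kscaling}, whose proof is immediate once one observes that $T$ carries admissible decompositions of $x$ to admissible decompositions of $Tx$. The only points that demand care are the \emph{sharp} placement of the constants---achieved through the dilation $s=(M_1/M_0)t$ rather than a crude pointwise bound---and the degenerate subcase $M_1=0$, where one must invoke the fact that the interpolation space lies inside the $X_1$-closure of $D_1$.
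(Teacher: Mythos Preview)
Your argument is correct and is essentially the standard proof of this interpolation inequality via the $K$-functional: the scaling estimate \eqref{eq:Kscaling} together with the dilation $s=(M_1/M_0)t$ in the defining integral is exactly how this result is established in the reference the paper cites. Note, however, that the paper does not prove this theorem at all---it simply refers the reader to Lunardi's monograph \cite{Lu2}---so there is no ``paper's own proof'' to compare against; your write-up supplies precisely the argument one finds in that reference, including a careful treatment of the degenerate cases $M_0=0$ and $M_1=0$ that such references often leave implicit.
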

Consequently, the space $(X,D)_{\theta,2}$ belongs to $J_\theta(X,D)$ for every $\theta\in (0,1)$.
Let $\a\in [0,1]$ and denote by $K_\a(X,D)$ the family of all subspaces $(Y, |\,\cdot\,|_Y)$ of $X$ containing $D$ such that
$$
\sup_{t>0,\, x\in Y}\frac{K(t,x,X,D)}{t^\a |x|_Y} < +\infty \, .
$$

\begin{thm}[Reiteration Theorem]\label{th:reiteration}
Let $0 < \theta_0 < \theta_1 < 1$. Fix $\theta\in\,\,]0,1[$ and set $\o = (1-\theta)\theta_0 + \theta\theta_1$.
\begin{itemize}
\item[1)] If $E_i\in K_{\theta_i}(X,D)$, $i = 0,\,1$, then $(E_0,E_1)_{\theta,2}\subset (X,D)_{\o,2}\, .$
\item[2)] If $E_i\in J_{\theta_i}(X,D)$, $i = 0,\,1$, then $(X,D)_{\o,2}\subset (E_0,E_1)_{\theta,2}$.
\end{itemize}
Consequently, if $E_i\in J_{\theta_i}(X,D)\cap K_{\theta_i}(X,D)$, $i = 0,\,1$, then $(E_0,E_1)_{\theta,2} = (X,D)_{\o,2}$, with equivalence between the respective norms.
\end{thm}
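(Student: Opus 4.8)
The plan is to prove the two inclusions separately by comparing the $K$-functional of the couple $(X,D)$ with that of the couple $(E_0,E_1)$, the bridge being the change of parameter $\tau=s^{\theta_1-\theta_0}$. Throughout set $\delta=\theta_1-\theta_0>0$ and record, from the definition of $\omega$, the two identities $\delta\theta+\theta_0=\omega$ and $\omega-\theta_0=\theta\delta$. Note also that every space in $J_{\theta_i}(X,D)$ or $K_{\theta_i}(X,D)$ contains $D$, so that $D\hookrightarrow E_0\cap E_1$; this guarantees that any representation of $x$ built from elements of $D$ is admissible in the couple $(E_0,E_1)$.

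For part 1), I assume $E_i\in K_{\theta_i}(X,D)$, i.e. $K(s,x,X,D)\le c_i\,s^{\theta_i}|x|_{E_i}$ for $x\in E_i$. For any decomposition $x=a+b$ with $a\in E_0$, $b\in E_1$, subadditivity of $s\mapsto K(s,\cdot,X,D)$ gives $K(s,x,X,D)\le c_0 s^{\theta_0}|a|_{E_0}+c_1 s^{\theta_1}|b|_{E_1}$. Factoring out $s^{\theta_0}$ and taking the infimum over such decompositions yields
$$K(s,x,X,D)\le c\,s^{\theta_0}\,K\big(s^{\delta},x,E_0,E_1\big).$$
Substituting this into $\int_0^\infty\big(s^{-\omega}K(s,x,X,D)\big)^2\frac{ds}{s}$ and performing the change of variable $\tau=s^{\delta}$ (so that $\frac{ds}{s}=\frac1\delta\frac{d\tau}{\tau}$ and $s^{\theta_0-\omega}=\tau^{-\theta}$) turns the integral into a constant multiple of $\int_0^\infty\big(\tau^{-\theta}K(\tau,x,E_0,E_1)\big)^2\frac{d\tau}{\tau}$. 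Hence $(E_0,E_1)_{\theta,2}\hookrightarrow(X,D)_{\omega,2}$.

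For part 2), I assume $E_i\in J_{\theta_i}(X,D)$. The key additional ingredient is the fundamental lemma of interpolation theory, equivalently the coincidence of the $K$- and $J$-methods: any $x\in(X,D)_{\omega,2}$ admits a representation $x=\int_0^\infty u(s)\frac{ds}{s}$ with $u(s)\in D$ and $\big(\int_0^\infty(s^{-\omega}J(s,u(s)))^2\frac{ds}{s}\big)^{1/2}\le c\,|x|_{\omega,2}$, where $J(s,u)=\max(|u|_X,s|u|_D)$. Using $E_i\in J_{\theta_i}$ together with the elementary bound $|u|_X^{1-\theta_i}|u|_D^{\theta_i}\le s^{-\theta_i}J(s,u)$ (valid since $|u|_X\le J$ and $|u|_D\le J/s$), I obtain $|u(s)|_{E_0}\le c\,s^{-\theta_0}J(s,u(s))$ and $|u(s)|_{E_1}\le c\,s^{-\theta_1}J(s,u(s))$. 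Choosing once more $\tau=s^{\delta}$ makes the two estimates homogeneous, giving $J(\tau,u(s),E_0,E_1)\le c\,s^{-\theta_0}J(s,u(s),X,D)$. Thus, rewritten as an integral in $\tau$, the representation of $x$ is a $J$-representation in $(E_0,E_1)$ whose $J$-norm, after the same substitution as before and the identity $\delta\theta+\theta_0=\omega$, is controlled by $|x|_{\omega,2}$. By the equivalence theorem this places $x$ in $(E_0,E_1)_{\theta,2}$, so $(X,D)_{\omega,2}\hookrightarrow(E_0,E_1)_{\theta,2}$. When $E_i\in J_{\theta_i}\cap K_{\theta_i}$, both inclusions hold and the two norm estimates combine into the claimed equality with equivalence of norms.

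The main obstacle is part 2): producing the $J$-representation of $x$ with $J$-functional controlled by the $(X,D)_{\omega,2}$-norm relies on the fundamental lemma and the $K$--$J$ equivalence, which is the genuinely nontrivial input. Part 1), by contrast, needs only subadditivity of the $K$-functional and a change of variables. Care is also required to verify convergence of all integrals and admissibility of the representations, but these are routine once the scaling $\tau=s^{\theta_1-\theta_0}$ has been isolated as the correct dictionary between the two couples.
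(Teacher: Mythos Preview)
Your argument is correct and is essentially the standard proof of the Reiteration Theorem as found in the interpolation literature (e.g.\ Lunardi, Bergh--L\"ofstr\"om, Triebel): bound $K(s,x,X,D)$ by $s^{\theta_0}K(s^{\delta},x,E_0,E_1)$ via subadditivity for part~1), and use the fundamental lemma together with the $J$--$K$ equivalence for part~2), the change of variable $\tau=s^{\theta_1-\theta_0}$ doing the bookkeeping in both directions.

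Note, however, that the paper does \emph{not} give its own proof of this theorem: it is stated in the preliminaries section with the proof delegated to \cite{Lu2}. So there is nothing in the paper to compare your argument against beyond the reference; your write-up supplies precisely the proof that the cited textbook contains.
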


\begin{rem}\label{remreiterationth}
Since $(X,D)_{\theta,2}$ is contained in $J_{\theta}(X,D)\cap K_{\theta}(X,D)$, for every $0<\theta_0,\theta_1<1$ we have
\begin{eqnarray}
\left((X,D)_{\theta_0,2},(X,D)_{\theta_1,2}\right)_{\theta,2} = (X,D)_{(1-\theta)\theta_0 + \theta\theta_1,2}\, .
\end{eqnarray}
Since $X\in J_0(X,D)\cap K_0(X,D)$ and $D\in J_1(X,D)\cap K_1(X,D)$, we also have
\begin{eqnarray}
\left(X,(X,D)_{\theta_1,2}\right)_{\theta,2} = (X,D)_{\theta\theta_1,2}\quad \textrm{and} \\
\left((X,D)_{\theta_0,2},D\right)_{\theta,2} = (X,D)_{(1-\theta)\theta_0 + \theta,2}\, .
\end{eqnarray}
\end{rem}

\subsection{Interpolation spaces and fractional powers of operators}

Let $(H,\langle\,\cdot\, ,\,\cdot\,\rangle)$ be a real Hilbert space, with norm $|\,\cdot\,|$. Let $A:D(A)\subset H\to H$ be a densely defined closed linear operator on $H$ such that
\begin{equation}\label{eq:positivity}
\langle Ax,x\rangle \ge \delta |x|^2\, ,\quad \forall\, x\in D(A)
\end{equation}
for some $\delta > 0$. As usual, we denote by $A^\theta$ the fractional power of $A$ for any $\theta\in\mathbb{R}$ (see, for instance, \cite[Chapter 1 - Section 5]{BPDM}), and  by $A^*$ the adjoint of $A$.
We recall that $A$ is self-adjoint if $D(A) = D(A^*)$ and $\langle Ax,y\rangle = \langle x,Ay\rangle$ for every $x,\, y\in D(A)$.
For the proof of the following result we refer to \cite[Theorem 4.36]{Lu2}. 

\begin{thm}\label{thmintspfractpow}
Let $A$ be a self-adjoint operator satisfying \eqref{eq:positivity}. Then, for every $\theta\in (0,1)$, $\alpha,\, \beta\in\mathbb{R}$ such that $\beta> \alpha\ge 0$,
\begin{equation}\label{eq:interp2}
(D(A^\alpha),D(A^\beta))_{\theta,2} = D(A^{(1-\theta)\alpha + \theta\beta})\, .
\end{equation}
In particular,
\begin{equation}\label{eq:interp1}
(H,D(A^\beta))_{\theta,2} = D(A^{\beta\theta})\, .
\end{equation}
\end{thm}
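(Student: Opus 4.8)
The plan is to reduce the statement to a computation in weighted $L^2$ spaces by means of the spectral theorem, and then to evaluate the interpolation norm explicitly. Since $A$ is self-adjoint and satisfies \eqref{eq:positivity}, the multiplication-operator form of the spectral theorem represents $A$, up to a unitary $U\colon H\to L^2(\Omega,\mu)$, as multiplication by a measurable function $a\colon\Omega\to[\delta,+\infty)$. Through the Borel functional calculus, $A^s$ is carried by $U$ onto multiplication by $a^s$, so that $U$ maps $D(A^s)$ onto $\{f\in L^2(\mu):a^sf\in L^2(\mu)\}$; moreover, because $a\ge\delta>0$ and $s\ge 0$, the graph norm of $A^s$ is equivalent to $|a^sf|_{L^2(\mu)}$. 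A unitary maps one compatible couple isomorphically onto another and hence commutes, up to equivalence of norms, with the real interpolation functor: applying Theorem~\ref{prop:intreswes} to $U$ and to $U^{-1}$ shows that $U$ is an isomorphism of $\big(D(A^\alpha),D(A^\beta)\big)_{\theta,2}$ onto the interpolation space of the weighted couple. Thus it suffices to identify this latter space, where the embedding $D(A^\beta)\hookrightarrow D(A^\alpha)$ becomes $\int a^{2\alpha}|f|^2\,d\mu\le\delta^{2(\alpha-\beta)}\int a^{2\beta}|f|^2\,d\mu$, valid precisely because $a\ge\delta$ and $\alpha\le\beta$.

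For the second step I would work with the equivalent \emph{quadratic} functional $K_2(t,f)^2:=\inf_{f=g+h}\big(|a^\alpha g|^2_{L^2}+t^2|a^\beta h|^2_{L^2}\big)$, which satisfies $K_2\le K\le\sqrt2\,K_2$ (from $\sqrt{x^2+y^2}\le x+y\le\sqrt2\sqrt{x^2+y^2}$) and therefore defines the same $(\theta,2)$-norm up to equivalence. Its advantage is that it is minimised pointwise: at each $\omega$ one minimises the quadratic $a^{2\alpha}|g|^2+t^2a^{2\beta}|f-g|^2$ in $g$, obtaining the ``series'' value $\tfrac{a^{2\alpha}\,t^2a^{2\beta}}{a^{2\alpha}+t^2a^{2\beta}}|f|^2$, so that $K_2(t,f)^2=\int_\Omega \tfrac{t^2a^{2(\alpha+\beta)}}{a^{2\alpha}+t^2a^{2\beta}}\,|f|^2\,d\mu$. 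Inserting this into $|f|^2_{\theta,2}=\int_0^\infty t^{-2\theta-1}K(t,f)^2\,dt$, exchanging the order of integration by Tonelli (the integrand being nonnegative) and performing the scaling $s=t\,a^{\beta-\alpha}$ in the inner integral, the $\omega$-dependence collapses to the single power $a^{2((1-\theta)\alpha+\theta\beta)}$ times the finite constant $\int_0^\infty s^{1-2\theta}(1+s^2)^{-1}\,ds$, which converges exactly because $0<\theta<1$. This gives $|f|^2_{\theta,2}\approx|a^{(1-\theta)\alpha+\theta\beta}f|^2_{L^2}$, i.e.\ \eqref{eq:interp2}, and \eqref{eq:interp1} is just the case $\alpha=0$.

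As an alternative for the general identity, once \eqref{eq:interp1} is established the case $0<\alpha<\beta$ follows with no further computation from the reiteration formula of Remark~\ref{remreiterationth}: choosing $N>\beta$ and using \eqref{eq:interp1} with exponent $N$ to write $D(A^\alpha)=(H,D(A^N))_{\alpha/N,2}$ and $D(A^\beta)=(H,D(A^N))_{\beta/N,2}$, the first displayed identity of Remark~\ref{remreiterationth} yields $\big(D(A^\alpha),D(A^\beta)\big)_{\theta,2}=(H,D(A^N))_{(1-\theta)\alpha/N+\theta\beta/N,2}=D(A^{(1-\theta)\alpha+\theta\beta})$, where both exponents $\alpha/N,\beta/N$ lie in $(0,1)$ as the remark requires (for $\alpha=0$ the general statement reduces to the special one). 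I would present the direct spectral computation as the main argument and mention this route as the tidy way to use the paper's own tools.

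The main obstacle is the rigor of the first step rather than the computation. Making the multiplication-operator model precise for a possibly unbounded $A$ (a direct-integral / spectral-measure decomposition), verifying that the functional calculus genuinely sends $A^s$ to multiplication by $a^s$ with the stated domain, and confirming that $U$ transports the interpolation couple with equivalent norms, are the delicate points; the equivalence $K\approx K_2$ and the Tonelli and convergence bookkeeping in the second step are then routine. Throughout, the essential quantitative input is $a\ge\delta>0$: it is what makes $|a^sf|_{L^2}$ equivalent to the full graph norm and what guarantees finiteness of the scaling integral.
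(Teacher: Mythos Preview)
Your argument is correct. The spectral-theorem reduction to weighted $L^2$ and the explicit computation of the quadratic $K$-functional are the standard way to establish this identity, and your bookkeeping (the substitution $s=t\,a^{\beta-\alpha}$, the convergence range of $\int_0^\infty s^{1-2\theta}(1+s^2)^{-1}\,ds$, the admissibility of the pointwise minimiser via the AM--GM bound $|a^\beta h|\le (2t)^{-1}|a^\alpha f|$) all check out. The reiteration shortcut you offer at the end is also valid and is indeed the quickest way to pass from \eqref{eq:interp1} to \eqref{eq:interp2} using the paper's own Remark~\ref{remreiterationth}.

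As for comparison with the paper: there is nothing to compare. The paper does not prove Theorem~\ref{thmintspfractpow}; it simply refers the reader to \cite[Theorem~4.36]{Lu2}. Your write-up is essentially a self-contained version of the argument one finds in Lunardi's book (spectral representation plus Stein--Weiss-type computation of the $K$-functional for weighted $L^2$ couples), so you have supplied what the paper outsources. If anything, you might shorten the presentation by giving only the $\alpha=0$ computation and then invoking reiteration, since that keeps the exponent arithmetic minimal.
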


We say that $A$ is an m-accretive operator if

$$
\begin{cases}
\langle Ax,x\rangle \ge 0 & \forall x\in D(A) \quad (accretivity) \\
(\lambda I + A)D(A) = H & \text{for some } \lambda > 0 \quad (maximality)
\end{cases}
$$
Notice that, if the above maximality condition is satisfies for some $\lambda > 0$, then the same condition holds for every $\lambda > 0$. Moreover, we say that $A$ is m-dissipative if $-A$ is m-accretive.

We refer the reader to \cite[Section 4.3]{Lu2} for the proof of the next result.

\begin{prop}\label{propfractpow}
Let $(A,D(A))$ be an m-accretive operator on a Hilbert space $H$, with $A^{-1}$ bounded in $H$.
Then for every $\alpha,\, \beta\in\mathbb{R}$, $\beta> \alpha\ge 0$, $\theta\in (0,1)$, $A$ satisfies \eqref{eq:interp2} and \eqref{eq:interp1}. In particular,
\begin{equation}\label{eq:propfractpow}
D(A^\theta) = (H,D(A))_{\theta,2}\quad \forall\, 0 < \theta < 1\, .
\end{equation}
\end{prop}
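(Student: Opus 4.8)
The plan is to read this as the m-accretive analogue of Theorem~\ref{thmintspfractpow}: the conclusions \eqref{eq:interp2} and \eqref{eq:interp1} are exactly those of the self-adjoint case, but the spectral theorem on which that case rests is no longer available and must be replaced by the holomorphic functional calculus of m-accretive operators on a Hilbert space. I would first establish the base identity $(H,D(A))_{\theta,2}=D(A^{\theta})$ (which is \eqref{eq:propfractpow}, i.e. \eqref{eq:interp1} with $\beta=1$), and then propagate it along the whole fractional scale by reiteration. Throughout, the hypothesis that $A^{-1}$ is bounded guarantees that the graph norm of $D(A^{s})$ is equivalent to $|A^{s}\,\cdot\,|$, so each $D(A^{s})$ is a Hilbert space continuously embedded in $H$, and that the Balakrishnan representation of the fractional powers together with the resolvent bound $|(I+tA)^{-1}|\le 1$ (valid for every $t>0$ since $A$ is m-accretive) are at our disposal.

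For the base identity I would argue by the $K$-method. Given $x$ and a scale parameter $t$, the natural splitting is $x=tA(I+tA)^{-1}x+(I+tA)^{-1}x$, whose second summand lies in $D(A)$; the resolvent and moment bounds $|A^{1-\theta}(I+tA)^{-1}|\le c\,t^{-(1-\theta)}$ then give the pointwise estimate $K(t,x,H,D(A))\le c\,t^{\theta}|A^{\theta}x|$. Dually, the Balakrishnan formula writes $A^{\theta}x$ as a resolvent integral controlled by the same quantities, and the moment inequality $|A^{\theta}x|\le c\,|x|^{1-\theta}|Ax|^{\theta}$ shows that $D(A^{\theta})\in J_{\theta}(H,D(A))$, while the pointwise $K$-bound shows $D(A^{\theta})\in K_{\theta}(H,D(A))$. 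Together these only sandwich $D(A^{\theta})$ between $(H,D(A))_{\theta,1}$ and $(H,D(A))_{\theta,\infty}$.

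The hard part will be to sharpen this sandwich to equality at the Hilbert exponent $p=2$, that is, to prove $\int_{0}^{+\infty}|t^{-\theta}K(t,x,H,D(A))|^{2}\,\tfrac{dt}{t}\sim|A^{\theta}x|^{2}$. The crude pointwise bound $K(t,x)\le c\,t^{\theta}|A^{\theta}x|$ controls only the $(\theta,\infty)$-norm; what is genuinely needed is a quadratic (square-function) estimate of the form $\int_{0}^{+\infty}|A^{\theta}\psi(tA)x|^{2}\,\tfrac{dt}{t}\sim|A^{\theta}x|^{2}$ for a suitable $\psi$. For self-adjoint $A$ this is an elementary spectral computation (and is precisely how Theorem~\ref{thmintspfractpow} is obtained); for a merely m-accretive $A$ it is the nontrivial content of McIntosh's theorem, to the effect that an m-accretive operator on a Hilbert space admits a bounded holomorphic functional calculus and, equivalently, satisfies such quadratic estimates. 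The conceptually cleanest formulation of the same fact is that, since $H$ and $D(A)$ form a compatible couple of Hilbert spaces, one has $(H,D(A))_{\theta,2}=[H,D(A)]_{\theta}$, and the bounded imaginary powers supplied by the functional calculus give $[H,D(A)]_{\theta}=D(A^{\theta})$. I expect this square-function estimate to be the real crux; everything else is soft.

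Finally, the base identity propagates to the full statements by the Reiteration Theorem~\ref{th:reiteration}, or more directly by Remark~\ref{remreiterationth}. The same analysis applied in the scale $\{D(A^{s})\}_{s\ge0}$ (the functional calculus of $A$ characterizes every $D(A^{s})$, so no power of $A$ need be treated as a standalone sectorial operator) yields \eqref{eq:interp1} for all $\beta$. Granting this, \eqref{eq:interp2} follows by writing $D(A^{\alpha})=(H,D(A^{\beta}))_{\alpha/\beta,2}$ for $0<\alpha<\beta$ (i.e. \eqref{eq:interp1} at exponent $\alpha/\beta\in(0,1)$) and invoking the third identity of Remark~\ref{remreiterationth} with $D=D(A^{\beta})$ and $\theta_{0}=\alpha/\beta$:
\begin{align*}
\bigl(D(A^{\alpha}),D(A^{\beta})\bigr)_{\theta,2}
&=\bigl((H,D(A^{\beta}))_{\alpha/\beta,2},D(A^{\beta})\bigr)_{\theta,2}\\
&=(H,D(A^{\beta}))_{(1-\theta)\frac{\alpha}{\beta}+\theta,2}
=D\bigl(A^{(1-\theta)\alpha+\theta\beta}\bigr),
\end{align*}
which is \eqref{eq:interp2}; the case $\alpha=0$ is \eqref{eq:interp1} itself. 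This last step is routine bookkeeping once the $p=2$ equivalence of the previous paragraph is in hand, and the details can be referred to \cite{Lu2}.
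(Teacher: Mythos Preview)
The paper does not supply its own proof of Proposition~\ref{propfractpow}; it simply refers the reader to \cite[Section~4.3]{Lu2}. Your outline is therefore more detailed than anything in the paper, and it is correct: the key step is indeed the upgrade from the soft $J_\theta\cap K_\theta$ sandwich to the exact $p=2$ identification, and for a non-self-adjoint m-accretive operator this rests on bounded imaginary powers (equivalently, McIntosh's bounded $H^\infty$-calculus and the associated square-function estimates), after which $(H,D(A))_{\theta,2}=[H,D(A)]_\theta=D(A^\theta)$ and reiteration via Remark~\ref{remreiterationth} finishes. This is precisely the route taken in the cited section of \cite{Lu2}, so your sketch and the paper's reference point to the same argument; your final remark that the details can be deferred to \cite{Lu2} is exactly what the paper does.

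One small point worth tightening: in the propagation step you write that ``the same analysis applied in the scale $\{D(A^s)\}_{s\ge0}$ yields \eqref{eq:interp1} for all $\beta$''. For $\beta>1$ the operator $A^\beta$ need not itself be m-accretive, so you cannot simply rerun the base case with $A$ replaced by $A^\beta$. The clean way is the one you also mention: once $A$ has bounded imaginary powers, the complex-interpolation identity $[H,D(A^\beta)]_\theta=D(A^{\beta\theta})$ holds for every $\beta>0$ directly from the BIP characterization, and on a Hilbert couple this coincides with $(H,D(A^\beta))_{\theta,2}$. Your reiteration computation for \eqref{eq:interp2} is then exactly right.
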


\begin{cor}\label{rem:interp2}
If $A$ is the infinitesimal generator of a $\mathcal{C}_0$-semigroup of contractions on $H$, with $A^{-1}$ bounded in $H$, then $D(A^m) = (H,D(A^k))_{\theta,2}$ for every $k\in \mathbb{N}$, $\theta\in (0,1)$ such that $m = \theta k$ is an integer.
\end{cor}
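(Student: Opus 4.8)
The plan is to reduce the statement to Proposition~\ref{propfractpow} applied to the operator $-A$. First I would observe that, since $A$ generates a $C_0$-semigroup of contractions, the Lumer--Phillips theorem guarantees that $A$ is m-dissipative; equivalently, $-A$ is m-accretive. Moreover, the hypothesis that $A^{-1}$ is bounded gives that $(-A)^{-1}=-A^{-1}$ is bounded as well. Hence $-A$ satisfies exactly the assumptions of Proposition~\ref{propfractpow}.

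Next I would apply the identity \eqref{eq:interp1} of Proposition~\ref{propfractpow} to $-A$, taking $\beta=k$ (a positive integer, so that $\beta>\alpha=0$) and the given $\theta\in(0,1)$. This yields
$$
\big(H,D((-A)^k)\big)_{\theta,2}=D\big((-A)^{k\theta}\big).
$$
The remaining point is to translate the powers of $-A$ back into powers of $A$. Since both $k$ and $m=\theta k$ are integers, the operators $(-A)^k=(-1)^kA^k$ and $(-A)^m=(-1)^mA^m$ differ from $A^k$ and $A^m$ only by a sign, so their domains coincide: $D((-A)^k)=D(A^k)$ and $D((-A)^m)=D(A^m)$. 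Substituting these equalities into the displayed identity gives $(H,D(A^k))_{\theta,2}=D(A^m)$, which is the desired conclusion.

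The only genuinely delicate step is the passage from the abstract hypothesis ``generator of a contraction semigroup'' to the m-accretivity of $-A$ required by Proposition~\ref{propfractpow}; this is precisely the content of Lumer--Phillips, together with the observation that the resolvent set of $A$ contains $(0,+\infty)$, so that the maximality condition $(\lambda I-A)D(A)=H$ holds for $\lambda>0$. Everything else is routine bookkeeping, kept clean by the fact that $m=\theta k$ is assumed to be an integer, which removes any need to invoke the fractional-power calculus for non-integer exponents of $-A$ when identifying the relevant domains.
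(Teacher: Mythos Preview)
Your proof is correct and is exactly the argument the paper intends: Corollary~\ref{rem:interp2} is stated without proof as an immediate consequence of Proposition~\ref{propfractpow}, and the only point to make explicit is that Lumer--Phillips converts the contraction-semigroup hypothesis into m-accretivity of $-A$, after which \eqref{eq:interp1} with $\beta=k$ gives the result once one notes that integer powers of $A$ and $-A$ share the same domain. There is nothing to add.
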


\subsection{An abstract decay result}\label{se:abstract}

We recall an abstract result obtained in \cite{A} in a slightly different form, and in \cite[Theorem~2.1]{alcako02} in the current version.\\
Let $A:D(A)\subset H\to H$ be the infinitesimal generator of a $\mathcal{C}_0$-semigroup of bounded linear operators on $H$.

\begin{thm}\label{Theorem1.1}
Let $L:H\to [0,+\infty)$ 
be a continuous function such that, for some integer $K\ge 0$ and some constant $c\ge 0$,
\begin{equation} \int_{0}^{T}
L (e^{tA}x)dt\le c \sum_{k=0}^{K}L (A^kx)\qquad\forall\, T\ge 0\,,\;\forall\, x\in D(A^K)\,.
\label{1.4}
\end{equation}
Then, for any integer $n\ge 1$, any $x\in D(A^{nK})$ and any $0\le s\le T$
\begin{equation}\label{1.5}
\int_{s}^{T}L (e^{tA}x) \;\frac{(t-s)^{n-1}}{(n-1)!}\,dt
 \leq c^n(1+K)^{n-1}\sum_{k=0}^{nK} 
 L (e^{sA}A^kx)\, .
\end{equation}
If, in addition, $L(e^{tA}x)\le L(e^{sA}x)$ for any $x\in H$ and any $0\le s\le t$, then
\begin{equation}\label{1.6}
L (e^{tA}x) 
 \leq  c^n (1+K)^{n-1}\,\frac{n!}{t^{n}}
\sum_{k=0}^{nK} 
 L (A^k x)\qquad\forall t>0
\end{equation}
for any integer $n\ge 1$ and any $x\in D(A^{nK})$.
\end{thm}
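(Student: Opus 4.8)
The plan is to prove the three displayed inequalities in sequence, since \eqref{1.5} follows from \eqref{1.4} by induction on $n$, and \eqref{1.6} is an immediate consequence of \eqref{1.5} together with the monotonicity hypothesis. I would begin by establishing the base case of \eqref{1.5}: for $n=1$ the inequality reads $\int_s^T L(e^{tA}x)\,dt \le c\sum_{k=0}^{K}L(e^{sA}A^kx)$, which one obtains from \eqref{1.4} by applying that hypothesis to the shifted initial datum $e^{sA}x$ in place of $x$. The crucial observation here is that $A$ commutes with the semigroup, so that $A^k e^{sA}x = e^{sA}A^k x$, and a change of variables $t\mapsto t-s$ converts the integral over $[s,T]$ into an integral of the form $\int_0^{T-s}L(e^{\tau A}(e^{sA}x))\,d\tau$.

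For the inductive step, I would assume \eqref{1.5} holds for some $n\ge 1$ and prove it for $n+1$. The natural approach is to start from the left-hand side with exponent $n+1$, namely $\int_s^T L(e^{tA}x)\frac{(t-s)^n}{n!}\,dt$, and integrate the purely polynomial factor. Writing $\frac{(t-s)^n}{n!} = \int_s^t \frac{(\sigma-s)^{n-1}}{(n-1)!}\,d\sigma$ and exchanging the order of integration (Fubini, legitimate since $L\ge 0$) yields
\begin{equation*}
\int_s^T L(e^{tA}x)\frac{(t-s)^n}{n!}\,dt = \int_s^T\left(\int_\sigma^T L(e^{tA}x)\frac{(t-s)^{n-1}}{(n-1)!}\,dt\right)d\sigma.
\end{equation*}
The inner integral is not quite of the form covered by the inductive hypothesis because the weight is $(t-s)^{n-1}$ rather than $(t-\sigma)^{n-1}$; the key estimate is that $(t-s)^{n-1}\le (1+K)^{?}\dots$ must be handled, or more cleanly one bounds the inner integral by applying the inductive hypothesis at base point $\sigma$ after noting $(t-s)\ge(t-\sigma)$ only gives the wrong direction. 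I would therefore instead apply the $n=1$ case to the inner integral in the variable $t$ over $[\sigma,T]$, reducing it to $c\sum_{k=0}^K L(e^{\sigma A}A^k x)$ times the appropriate weight, then apply the inductive hypothesis in $\sigma$ to each of the $K+1$ resulting terms $\int_s^T L(e^{\sigma A}A^k x)\frac{(\sigma-s)^{n-1}}{(n-1)!}\,d\sigma$. Each such application produces $c^n(1+K)^{n-1}\sum_{j=0}^{nK}L(e^{sA}A^{k+j}x)$, and summing over $k=0,\dots,K$ gives indices ranging up to $(n+1)K$; collecting the constants yields the factor $c^{n+1}(1+K)^n$, exactly matching the claimed bound.

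Finally, for \eqref{1.6} I would specialize \eqref{1.5} to $s=0$ and use the monotonicity assumption $L(e^{tA}x)\le L(e^{sA}x)$ for $s\le t$, which makes $t\mapsto L(e^{tA}x)$ nonincreasing. Bounding the integrand from below by its value at the right endpoint, $L(e^{tA}x)\ge L(e^{TA}x)$ for $t\le T$, gives
\begin{equation*}
L(e^{TA}x)\int_0^T\frac{t^{n-1}}{(n-1)!}\,dt \le \int_0^T L(e^{tA}x)\frac{t^{n-1}}{(n-1)!}\,dt \le c^n(1+K)^{n-1}\sum_{k=0}^{nK}L(A^kx),
\end{equation*}
and since $\int_0^T t^{n-1}/(n-1)!\,dt = T^n/n!$, solving for $L(e^{TA}x)$ and renaming $T$ as $t$ produces \eqref{1.6}. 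The main obstacle I anticipate is the bookkeeping in the inductive step: tracking how the summation indices combine and verifying that the constant accumulates precisely as $c^{n+1}(1+K)^n$ rather than a larger power of $(1+K)$. The count of exactly $(K+1)$ terms at each stage, each contributing one factor of the constant from the $n=1$ case applied inside the nested integral, is what controls the growth and must be arranged carefully.
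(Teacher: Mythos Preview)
The paper does not actually prove Theorem~\ref{Theorem1.1}: it is stated in Section~\ref{se:abstract} as a result recalled from \cite{A} and \cite[Theorem~2.1]{alcako02}, with no proof given. So there is no in-paper argument to compare against; your proposal must be judged on its own.

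Your overall strategy is the standard one and is correct: establish (1.5) for $n=1$ by applying (1.4) to the shifted datum $e^{sA}x$, then induct by peeling off one integration via Fubini, and finally deduce (1.6) from (1.5) by monotonicity and the identity $\int_0^T t^{n-1}/(n-1)!\,dt=T^n/n!$. The base case and the passage from (1.5) to (1.6) are fine as written.

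There is, however, a slip in the Fubini step that causes the confusion you flag. After writing $\frac{(t-s)^n}{n!}=\int_s^t\frac{(\sigma-s)^{n-1}}{(n-1)!}\,d\sigma$ and exchanging the order of integration, the correct result is
\[
\int_s^T L(e^{tA}x)\,\frac{(t-s)^n}{n!}\,dt
=\int_s^T\frac{(\sigma-s)^{n-1}}{(n-1)!}\Bigl(\int_\sigma^T L(e^{tA}x)\,dt\Bigr)d\sigma,
\]
with the weight $(\sigma-s)^{n-1}$, not $(t-s)^{n-1}$ as you wrote. This weight is constant in $t$, so the inner integral is exactly $\int_\sigma^T L(e^{tA}x)\,dt$, to which the $n=1$ case applies directly; there is no obstruction about ``wrong weights'' at all. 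Once you make this correction, your subsequent outline---apply the $n=1$ bound to the inner integral, then the inductive hypothesis to each of the $K+1$ outer integrals $\int_s^T L(e^{\sigma A}A^kx)\frac{(\sigma-s)^{n-1}}{(n-1)!}\,d\sigma$---goes through cleanly. The bookkeeping you anticipate is straightforward: the double sum $\sum_{k=0}^K\sum_{j=0}^{nK}L(e^{sA}A^{k+j}x)$ is bounded by $(K+1)\sum_{m=0}^{(n+1)K}L(e^{sA}A^mx)$ since each index $m$ is hit at most $K+1$ times, yielding exactly $c^{n+1}(1+K)^n$.
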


\section{Main result}\label{se:main}

We are now ready to state and prove the polynomial decay of solutions to weakly coupled systems. In addition to the standing assumptions $(H1),(H2),(H3)$, we will assume that
\begin{equation}\label{eq:theta}
D(A_2)\subset D(A_1^{1/2})\qquad\mbox{and}\qquad|A_1^{1/2}u|\le c|A_2u|
\quad \forall u\in D(A_2)
\end{equation}
for some constant $c>0$. Condition \eqref{eq:theta} can be formulated in the following equivalent ways.

\begin{lem}\label{le:main}
Under assumption $(H1)$ the following properties are equivalent.
\begin{enumerate}
\item[(a)]
Assumption \eqref{eq:theta} holds.
\item[(b)] $A_1^{1/2}A_2^{-1}\in \mathcal L(H)$.
\item[(c)] For some constant $c>0$
\begin{equation}\label{eqnequiv}
|\langle A_1u,v\rangle|\le c|A_2v|\langle A_1u,u\rangle^{1/2}\qquad\forall u\in D(A_1)\,,\;\forall v\in D(A_2)\,.
\end{equation}
\end{enumerate}
\end{lem}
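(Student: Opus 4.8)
The plan is to prove the cyclic chain of implications $(a)\Rightarrow(b)\Rightarrow(c)\Rightarrow(a)$, exploiting throughout the fact that, by $(H1)$, the operator $A_2$ is self-adjoint and bounded below by $\omega_2>0$, so $A_2^{-1}\in\mathcal L(H)$ is well defined, self-adjoint and positive, with range $D(A_2)$. The key elementary remark I will use repeatedly is that $\langle A_1u,u\rangle=|A_1^{1/2}u|^2$ and $\langle A_2 v,v\rangle=|A_2^{1/2}v|^2$, so the inequalities in \eqref{eq:theta} and \eqref{eqnequiv} are really statements about the square roots $A_1^{1/2}$ and $A_2^{1/2}$.

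For $(a)\Rightarrow(b)$: assuming \eqref{eq:theta}, I first note that every $w\in H$ can be written as $w=A_2 v$ with $v=A_2^{-1}w\in D(A_2)$, so the inclusion $D(A_2)\subset D(A_1^{1/2})$ together with the estimate $|A_1^{1/2}v|\le c|A_2 v|$ reads $|A_1^{1/2}A_2^{-1}w|\le c|w|$ for all $w\in H$. Hence $A_1^{1/2}A_2^{-1}$ is everywhere defined on $H$ and bounded, which is exactly $(b)$. Conversely, for $(b)\Rightarrow(a)$, boundedness of $T:=A_1^{1/2}A_2^{-1}$ gives, for $v\in D(A_2)$, that $A_1^{1/2}v=T(A_2 v)$ is well defined—so $v\in D(A_1^{1/2})$, yielding the inclusion—and $|A_1^{1/2}v|\le \|T\|\,|A_2 v|$, which is \eqref{eq:theta}. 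Thus $(a)$ and $(b)$ are equivalent, with the optimal constant $c=\|A_1^{1/2}A_2^{-1}\|$.

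For the equivalence of $(c)$ with the others, I would pass through $(b)$. Given $u\in D(A_1)$ and $v\in D(A_2)$, I rewrite the left-hand side of \eqref{eqnequiv} by moving a half-power of $A_1$ across the inner product: since $A_1$ is self-adjoint and $u\in D(A_1)\subset D(A_1^{1/2})$, while the membership $v\in D(A_2)\subset D(A_1^{1/2})$ is guaranteed once $(a)$ holds, one has $\langle A_1 u,v\rangle=\langle A_1^{1/2}u,A_1^{1/2}v\rangle$. Applying Cauchy--Schwarz gives $|\langle A_1u,v\rangle|\le |A_1^{1/2}u|\,|A_1^{1/2}v|=\langle A_1u,u\rangle^{1/2}|A_1^{1/2}v|$, and then $(a)$ supplies $|A_1^{1/2}v|\le c|A_2 v|$, producing \eqref{eqnequiv}. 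For the reverse implication $(c)\Rightarrow(b)$, I would read \eqref{eqnequiv} as a bound on the bilinear form $(u,v)\mapsto\langle A_1^{1/2}u,A_1^{1/2}v\rangle$ and aim to extract boundedness of $A_1^{1/2}A_2^{-1}$; substituting $v=A_2^{-1}w$ and taking a supremum over $u$ (equivalently over $A_1^{1/2}u$ in the range of $A_1^{1/2}$) should identify $|A_1^{1/2}A_2^{-1}w|$ and bound it by $c|w|$.

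The main obstacle I anticipate is the care needed in the $(c)\Rightarrow(b)$ direction with domains and densities: to recover the norm $|A_1^{1/2}v|$ from \eqref{eqnequiv} one must take a supremum of $\langle A_1^{1/2}u,A_1^{1/2}v\rangle$ over $u$ ranging in a set whose $A_1^{1/2}$-images are dense enough in $H$ to retrieve the full norm, and one must confirm a priori that $v=A_2^{-1}w$ actually lies in $D(A_1^{1/2})$ before writing $\langle A_1u,v\rangle=\langle A_1^{1/2}u,A_1^{1/2}v\rangle$. I expect this to be handled by a density argument using that $D(A_1)$ is a core for $A_1^{1/2}$ and that $A_1^{1/2}$ has dense range (being self-adjoint and injective), but it is the step that requires the most attention rather than routine computation.
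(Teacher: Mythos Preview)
Your treatment of $(a)\Leftrightarrow(b)$ and $(a)\Rightarrow(c)$ is correct and matches the paper, which also dismisses these implications as straightforward. The only substantive step is $(c)\Rightarrow(a)$, and here your sketch contains the circularity you yourself flag: you cannot write $\langle A_1u,v\rangle=\langle A_1^{1/2}u,A_1^{1/2}v\rangle$ and then take a supremum to recover $|A_1^{1/2}v|$ without already knowing $v\in D(A_1^{1/2})$. Your suggested ``density argument'' is not quite the right framing; what actually closes the gap is the Riesz representation theorem (equivalently, the domain characterization of the self-adjoint operator $A_1^{1/2}$). The paper proceeds as follows: set $V_1=D(A_1^{1/2})$ with the inner product $\langle u,w\rangle_{V_1}=\langle A_1^{1/2}u,A_1^{1/2}w\rangle$; for fixed $v\in D(A_2)$ the linear functional $\phi_v(u)=\langle A_1u,v\rangle$ is well defined on $D(A_1)$ (no domain issue, since $A_1u\in H$ and $v\in H$) and, by $(c)$, satisfies $|\phi_v(u)|\le c|A_2v|\,\|u\|_{V_1}$; extend $\phi_v$ to $V_1$ by density and apply Riesz to obtain $w\in V_1$ with $\phi_v(u)=\langle A_1^{1/2}u,A_1^{1/2}w\rangle$; then $\langle A_1u,v-w\rangle=0$ for all $u\in D(A_1)$, and since $A_1$ is onto $H$ this forces $v=w\in D(A_1^{1/2})$ with $|A_1^{1/2}v|=\|w\|_{V_1}\le c|A_2v|$. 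This is the precise mechanism your ``supremum over $A_1^{1/2}u$'' was reaching for, but packaged so that membership $v\in D(A_1^{1/2})$ is a conclusion rather than an assumption.
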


\begin{proof}
The implications (a)$\Leftrightarrow$(b)$\Rightarrow$(c) being straightforward, let us proceed to show that (c)$\Rightarrow$(a). Consider the Hilbert space
$V_1=D(A_1^{1/2})$ with the scalar product 
\begin{equation*}
\langle u,v\rangle_{V_1} =\langle A_1^{1/2}u,A_1^{1/2}v\rangle
\end{equation*}
and recall that $D(A_1)$ is a dense subspace of $V_1$. Let $v\in D(A_2)$ and define the linear functional $\phi_v:D(A_1)\to\R$ by
\begin{equation*}
\phi_v(u)=\langle A_1u,v\rangle\qquad\forall u\in D(A_1)\,.
\end{equation*}
Owing to (c), $\phi_v$ can be extended to a bounded linear functional on $V_1$ (still denoted by $\phi_v$) satisfying $\|\phi_v\|\leq c|A_2v|$. Therefore, by the Riesz Theorem, there is a unique vector $w\in V_1$ such that
\begin{equation*}
\phi_v(u)=\langle A_1^{1/2}u,A_1^{1/2}w\rangle
\qquad\forall u\in V_1\,.
\end{equation*}
Hence, $\langle A_1u,(v-w)\rangle=0$ for all $u\in D(A_1)$, and so $v=w\in V_1$ since $A_1$ is invertible. Moreover, $|A_1^{1/2}v|=|w|_{V_1}\leq c|A_2v|$.
\end{proof}

The main result of this section is the following.

\begin{thm}\label{th:main}
Assume $(H1),(H2),(H3)$ and \eqref{eq:theta}. 
If $U_0\in D({\mathcal A}^{4})$, then the solution $U$ of problem \eqref{eq:2z}
satisfies 
\begin{equation}\label{eq:main0}
\int_{0}^{T}{\mathcal E}(U(t))dt
\leq c_1 \sum_{k=0}^{4}{\mathcal E}(U^{(k)}(0))
\qquad \forall\, T>0
\end{equation}
for some constant $c_1>0$.
\end{thm}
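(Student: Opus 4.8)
The plan is to prove \eqref{eq:main0} directly by the multiplier (energy) method, so that it serves as hypothesis \eqref{1.4} of Theorem~\ref{Theorem1.1} with $L=\mathcal{E}$ and $K=4$. First I would record the dissipation identity: differentiating $\mathcal{E}(U(t))$ along \eqref{eq:2z} and using the self-adjointness of $A_1,A_2$ together with the algebraic cancellation of the coupling terms, one finds
\begin{equation*}
\frac{d}{dt}\mathcal{E}(U(t))=-\langle Bu'(t),u'(t)\rangle\le -\beta|u'(t)|^2\le 0 .
\end{equation*}
Hence $\mathcal{E}$ is non-increasing and, integrating, $\int_0^T|u'|^2\,dt\le \beta^{-1}\mathcal{E}(U(0))$. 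Since $U_0\in D(\mathcal{A}^4)$ and $\mathcal{A}$ generates a $C_0$-semigroup, each derivative $U^{(k)}$ ($0\le k\le 3$) solves the same system \eqref{eq:2z}; applying the identity to $U^{(k)}$ yields the family of bounds $\int_0^T|u^{(k+1)}|^2\,dt\le \beta^{-1}\mathcal{E}(U^{(k)}(0))$ for $k=0,1,2,3$. These are the only \emph{dissipated} quantities available, and the entire difficulty is to recover the full energy from them.

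Next I would run the standard position multipliers. Pairing the first equation with $u$ and integrating by parts in $t$ gives
\begin{equation*}
\int_0^T|A_1^{1/2}u|^2\,dt=\Big[-\langle u',u\rangle\Big]_0^T+\int_0^T|u'|^2\,dt-\int_0^T\langle Bu',u\rangle\,dt-\alpha\int_0^T\langle v,u\rangle\,dt,
\end{equation*}
so $\int_0^T E_1(u,u')\,dt$ is controlled once the boundary terms, the friction term and the coupling term are handled: the boundary terms are dominated by $\mathcal{E}$ at $t=0,T$ (hence by $\mathcal{E}(U(0))$ by monotonicity), while $\langle Bu',u\rangle$ and $\alpha\langle v,u\rangle$ are absorbed by Young's inequality and the coercivity bound \eqref{2.4z}. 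Pairing the second equation with $v$ gives, analogously,
\begin{equation*}
\int_0^T|A_2^{1/2}v|^2\,dt=\Big[-\langle v',v\rangle\Big]_0^T+\int_0^T|v'|^2\,dt-\alpha\int_0^T\langle u,v\rangle\,dt,
\end{equation*}
which reduces the control of $\int_0^T E_2(v,v')\,dt$ to that of the undamped velocity $\int_0^T|v'|^2\,dt$.

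The heart of the proof is therefore the estimate of $\int_0^T|v'|^2\,dt$, which is where the indirect mechanism and the compatibility hypothesis \eqref{eq:theta} enter. Differentiating the first equation in time gives $\alpha v'=-(u'''+A_1u'+Bu'')$; pairing with $v'$, integrating over $(0,T)$, integrating the $u'''$--term by parts and substituting $v''=-A_2v-\alpha u$ from the second equation produces, after using the self-adjointness of $A_2$ and dividing by $\alpha\neq0$, a relation of the schematic form
\begin{equation*}
\alpha\int_0^T|v'|^2\,dt=\Big[\text{boundary terms}\Big]_0^T-\int_0^T\langle A_1^{1/2}u',A_1^{1/2}v'\rangle\,dt+\int_0^T\big(\cdots\big)\,dt,
\end{equation*}
where the remaining integrals involve $u''$, $A_2v$ and $u$, and $\langle Bu'',v'\rangle$ (the latter absorbed into the left-hand side by Young's inequality). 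The decisive cross term $\langle A_1^{1/2}u',A_1^{1/2}v'\rangle$ is treated through Lemma~\ref{le:main}: by part~(c), $|\langle A_1u',v'\rangle|\le c|A_2v'|\,\langle A_1u',u'\rangle^{1/2}$, and $A_2v'$ is in turn expressed via the differentiated second equation as $A_2v'=-v'''-\alpha u'$. This is exactly the step that transfers the dissipation from $u'$ to $v'$. Note, however, that $\langle u'',A_2v\rangle$ reintroduces $v''$ and the cross term reintroduces $v'''$, i.e. undamped velocities one order higher; the estimate must therefore be applied recursively to $U',U'',U'''$, and closes at order $4$. Bounding all the terms so produced by the dissipation estimates $\int_0^T|u^{(k+1)}|^2\,dt\lesssim\mathcal{E}(U^{(k)}(0))$ of the first paragraph is precisely what forces the sum on the right of \eqref{eq:main0} to run up to $k=4$ and the datum to lie in $D(\mathcal{A}^4)$.

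Finally I would assemble the three families of estimates with suitable positive weights, absorbing all lower-order terms by means of the coercivity constants $\omega_1,\omega_2,\beta$ and the lower bound $\nu(\alpha)>0$ of \eqref{2.6z}, and collecting the boundary contributions (each dominated by some $\mathcal{E}(U^{(k)}(0))$ by monotonicity). Since \eqref{2.6z} gives $\mathcal{E}(U)\le E_1(u,u')+E_2(v,v')$ up to a constant, the combination yields $\int_0^T\mathcal{E}(U)\,dt\le c_1\sum_{k=0}^{4}\mathcal{E}(U^{(k)}(0))$, which is \eqref{eq:main0}. The main obstacle is the third step: because the coupling is of order zero (\emph{weak}), the damping cannot be passed to $v'$ without differentiating the equations, and keeping the resulting cross terms absorbable is possible only under the compatibility condition \eqref{eq:theta}---that is, through the boundedness of $A_1^{1/2}A_2^{-1}$ and the estimate in Lemma~\ref{le:main}(c).
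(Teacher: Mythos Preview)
Your overall architecture is right---the dissipation identity, the position multipliers reducing everything to $\int_0^T|v'|^2\,dt$ (this is precisely \eqref{eq:decay}), and the recognition that the compatibility hypothesis must enter at this point all match the paper. But the core step, the estimate of $\int_0^T|v'|^2\,dt$, has a gap that prevents the argument from closing.

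You pair the differentiated first equation with $v'$ and bound the cross term via Lemma~\ref{le:main}(c): $|\langle A_1u',v'\rangle|\le c\,|A_2v'|\,|A_1^{1/2}u'|$. After substituting $A_2v'=-v'''-\alpha u'$, any use of Young's inequality leaves either a large-constant term $\int_0^T|A_1^{1/2}u'|^2\,dt$ or, if you put the small constant on it, a large-constant $\int_0^T|v'''|^2\,dt$ that merely shifts the same difficulty two orders up. The quantity $\int_0^T|A_1^{1/2}u'|^2\,dt$ is \emph{not} dissipated: it is part of $\int_0^T E_1(u',u'')\,dt$, which is only bounded by $T\cdot\mathcal{E}(U'(0))$ and cannot be controlled uniformly in $T$ by the energies at $t=0$. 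For the same reason the term $\langle u'',A_2v\rangle$ you flag is not absorbable. The recursion you describe therefore does not terminate: each application introduces the undamped velocity two orders higher, so nothing closes at order~$4$.

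The paper avoids this by using \emph{negative}-order multipliers. One first pairs the two equations with $A_1^{-1}v$ and $A_2^{-1}u$, respectively, and subtracts; the terms $\langle u,v\rangle$ cancel, and after two integrations by parts in time the compatibility condition surfaces as $\langle A_1^{-1/2}v,\,A_1^{1/2}A_2^{-1}u''\rangle$. Since $A_1^{1/2}A_2^{-1}\in\mathcal L(H)$ (Lemma~\ref{le:main}(b)), this is dominated by $|A_1^{-1/2}v|\,|u''|$, and $\int_0^T|u''|^2\,dt$ \emph{is} a dissipated quantity. This yields
\[
\int_0^T|A_1^{-1/2}v|^2\,dt\le C\int_0^T|A_2^{-1/2}u|^2\,dt+\frac{C}{\alpha^2}\big[\mathcal{E}(U(0))+\mathcal{E}(U'(0))\big].
\]
A second step, pairing the second equation with $A_2^{-1}v$ and feeding in the previous estimate applied to $U''$, gives $\int_0^T|v|^2\,dt\le C\alpha^2\int_0^T|u|^2\,dt+\frac{C}{\alpha^2}\sum_{k=1}^{3}\mathcal{E}(U^{(k)}(0))$; applying this to $v'$ and inserting into \eqref{eq:decay} finishes the proof. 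The moral: \eqref{eq:theta} must be exploited through the bounded operator $A_1^{1/2}A_2^{-1}$ acting on a dissipated quantity such as $u''$, not through inequality~(c) on $(u',v')$, where the undissipated factor $|A_1^{1/2}u'|$ ruins the estimate.
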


The proof of Theorem \ref{th:main} will be given in several steps. First, let us recall that, as showed in \cite[Lemma~3.3]{alcako02}, system \eqref{eq:2z} is dissipative. Indeed, under the only assumptions $(H1)$ and $(H2)$, the energy of the solution $U =(u,u',v,v')$ of problem \eqref{eq:2z} with $U_0\in D({\mathcal A})$ satisfies
\begin{equation}\label{eq:dissi}
\frac d{dt}{\mathcal E}(U(t))
=-|B^{1/2}u'(t)|^2
\qquad \forall t\geq 0.
\end{equation}
In particular, $t\mapsto {\mathcal E}(U(t))$ is nonincreasing on $[0,\infty)$.

\begin{cor}\label{co:main1}
Assume $(H1),(H2),(H3)$ and \eqref{eq:theta}.
\begin{itemize}
\item[(a)] If $U_0\in D({\mathcal A}^{4n})$ for some integer
$n\ge 1$, then the solution $U$ of problem \eqref{eq:2z}
satisfies 
\begin{equation}\label{eq:maind}
{\mathcal E}(U(t))
\le \frac{c_n}{t^n}\sum_{k=0}^{4n}{\mathcal E}(U^{(k)}(0))
\qquad \forall t>0
\end{equation}
for some constant $c_n>0$.
\item[(b)]  For every $U_0\in {\mathcal H}$ we have
$$
{\mathcal E}(U(t))\to 0\quad\hbox{as}\quad t\to +\infty.
$$ 
\end{itemize}
\end{cor}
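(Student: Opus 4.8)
The plan is to derive both statements directly from the abstract decay result, Theorem~\ref{Theorem1.1}, applied to the generator $\mathcal{A}$ with the energy functional. First I would set $L := \mathcal{E}$, $A := \mathcal{A}$ and $K := 4$, and check that the hypotheses of Theorem~\ref{Theorem1.1} are met. The functional $L$ is continuous and nonnegative: the identity $(U|U) = 2\mathcal{E}(U)$ gives $\mathcal{E}(U) = \tfrac12\|U\|_{\mathcal{H}}^2$, while \eqref{2.6z} ensures $\mathcal{E}(U) \ge 0$. Since $U(t) = e^{t\mathcal{A}}U_0$ solves \eqref{eq:2z}, differentiating the semigroup gives $U^{(k)}(0) = \mathcal{A}^k U_0$ for every $k$, so the integral inequality \eqref{eq:main0} of Theorem~\ref{th:main} is exactly hypothesis \eqref{1.4} with $K=4$ and constant $c = c_1$.

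Next I would verify the monotonicity hypothesis $L(e^{t\mathcal{A}}x) \le L(e^{s\mathcal{A}}x)$ for $0 \le s \le t$. For $x \in D(\mathcal{A})$ this is immediate from the dissipation identity \eqref{eq:dissi}, namely $\tfrac{d}{dt}\mathcal{E}(U(t)) = -|B^{1/2}u'(t)|^2 \le 0$; for arbitrary $x \in \mathcal{H}$ it extends by density of $D(\mathcal{A})$ in $\mathcal{H}$, using continuity of $\mathcal{E}$ and of the semigroup to pass to the limit in the inequality. With all hypotheses verified, estimate \eqref{1.6} of Theorem~\ref{Theorem1.1} gives, for every integer $n \ge 1$ and every $U_0 \in D(\mathcal{A}^{4n})$,
$$\mathcal{E}(U(t)) \le c_1^{\,n}\,5^{\,n-1}\,\frac{n!}{t^n}\sum_{k=0}^{4n}\mathcal{E}(\mathcal{A}^k U_0)\qquad\forall t>0,$$
which is precisely \eqref{eq:maind} with $c_n := c_1^{\,n}\,5^{\,n-1}\,n!$, after recalling $\mathcal{A}^k U_0 = U^{(k)}(0)$. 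This settles part~(a).

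For part~(b) I would argue by density. Since $\mathcal{A}$ is maximal dissipative (Lemma~\ref{leminvtdiopa}), the semigroup $e^{t\mathcal{A}}$ is a contraction on $\mathcal{H}$, and $D(\mathcal{A}^4)$ is dense in $\mathcal{H}$. Given $U_0 \in \mathcal{H}$ and $\varepsilon > 0$, pick $\widetilde U_0 \in D(\mathcal{A}^4)$ with $\|U_0 - \widetilde U_0\|_{\mathcal{H}} < \varepsilon$. The contraction property and the triangle inequality yield
$$\|e^{t\mathcal{A}}U_0\|_{\mathcal{H}} \le \|U_0 - \widetilde U_0\|_{\mathcal{H}} + \|e^{t\mathcal{A}}\widetilde U_0\|_{\mathcal{H}} < \varepsilon + \sqrt{2\,\mathcal{E}(e^{t\mathcal{A}}\widetilde U_0)}\,.$$
By part~(a) with $n = 1$ the last term tends to $0$ as $t \to +\infty$, so $\limsup_{t\to\infty}\|e^{t\mathcal{A}}U_0\|_{\mathcal{H}} \le \varepsilon$; letting $\varepsilon \to 0$ gives $\|U(t)\|_{\mathcal{H}} \to 0$, equivalently $\mathcal{E}(U(t)) \to 0$.

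Because the statement reduces to invoking Theorem~\ref{th:main} and Theorem~\ref{Theorem1.1}, there is no real difficulty here. The only points demanding slight care are the extension of the monotonicity of $t \mapsto \mathcal{E}(U(t))$ from smooth data to all of $\mathcal{H}$ (needed to apply \eqref{1.6}) and the approximation step in part~(b); both are handled by the density and continuity arguments indicated above.
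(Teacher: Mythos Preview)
Your proposal is correct and follows essentially the same approach as the paper: part~(a) is obtained by feeding the integral inequality of Theorem~\ref{th:main} and the dissipation identity~\eqref{eq:dissi} into the abstract decay result Theorem~\ref{Theorem1.1}, and part~(b) by a density-plus-contraction argument. The only cosmetic difference is that for~(b) the paper approximates by a sequence $U_0^n\in D(\mathcal{A}^{4n})$, whereas you (more economically) approximate from $D(\mathcal{A}^4)$; both work.
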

\begin{proof}
Statement $(a)$ follows by combining the dissipation relation \eqref{eq:dissi}, Theorem~\ref{th:main}, and Theorem~\ref{Theorem1.1}. To prove part $(b)$, we fix $U_0\in \h$ and consider a sequence $(U_0^n)_{n\in\mathbb{N}}$ such that $U_0^n\in D(\A^{4n})$ for every $n\ge 1$ and $U_0^n\to U_0$ in $\h$ for $n\to +\infty$. We set
$U^n(t)=e^{t{\mathcal A}}U^n_0$ and $U(t)=e^{t{\mathcal A}}U_0$ for $t \ge 0$. Then, by linearity and the contraction property of  $(e^{t{\mathcal A}})_{t \ge 0}$, we have
$$
||U^n(t) - U(t)|| \le ||U_0^n - U_0|| \,, \quad\forall \ t \ge 0\,, n \in
\mathbb{N} \,.
$$
\noindent Therefore, recalling the definition of  ${\mathcal E}$, we deduce that
${\mathcal E}(U^n(.))$ converges to
${\mathcal E}(U(.))$ as $n\to +\infty$, uniformly on $[0,\infty)$. Since, for any fixed $n \in \mathbb{N}$, ${\mathcal E}(U^n(t))$ converges to $0$ as $t\to \infty$, we easily obtain the conclusion.
\end{proof}
We now proceed with the proof of Theorem~\ref{th:main}. Hereafter, $C$ will denote a generic positive constant, independent of $\a$. To begin with, let us recall that, thanks to \cite[Lemma~3.4]{alcako02}, the solution of \eqref{eq:2z} with $U_0\in D({\mathcal A})$ verifies
\begin{equation}\label{eq:decay}
\int_{0}^{T}{\mathcal E}(U(t))dt
 \leq
\int_{0}^{T}|v'(t)|^2dt +C\,{\mathcal E}(U(0))
\end{equation}
for some constant $C\ge 0$  and every $T\ge 0$.
Hence, the main technical point of the proof is to bound the right-hand side of \eqref{eq:decay} by the total energy of $U$ (and a finite number of its derivatives) at 0.

\begin{lem}
Let $U=(u,u',v,v')$ be the solution of problem \eqref{eq:2z} with $U_0\in D(\mathcal A)$. Then
\begin{equation}\label{eq:est1}
\int_0^T|A_1^{-1/2}v|^2dt\leq C\int_0^T|A_2^{-1/2}u|^2dt+ {C\over\a^2}\Big[{\mathcal E}(U(0))+{\mathcal E}(U'(0))\Big]\, .
\end{equation}
\end{lem}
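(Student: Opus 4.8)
The plan is to generate the weak norm $\int_0^T|A_1^{-1/2}v|^2$ by testing the \emph{damped} equation against a weak multiplier, and then to trade each remaining quantity either for the weak norm $\int_0^T|A_2^{-1/2}u|^2$ (via the \emph{undamped} equation) or for time integrals of $u'$ and $u''$, which are the only integrals the dissipation relation \eqref{eq:dissi} allows us to control. First I would multiply the first equation of \eqref{eq:1z} by $A_1^{-1}v$ in $H$ and integrate on $[0,T]$. Since $\langle A_1u,A_1^{-1}v\rangle=\langle u,v\rangle$ and $\langle\a v,A_1^{-1}v\rangle=\a|A_1^{-1/2}v|^2$, this gives
\begin{equation*}
\a\int_0^T|A_1^{-1/2}v|^2\,dt=-\int_0^T\langle u'',A_1^{-1}v\rangle\,dt-\int_0^T\langle u,v\rangle\,dt-\int_0^T\langle Bu',A_1^{-1}v\rangle\,dt .
\end{equation*}
The single power of $\a$ on the left is exactly what will produce the factor $1/\a^2$ after the final division.

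The decisive term is the coupling integral $\int_0^T\langle u,v\rangle$. Here I would invoke the \emph{second} equation, written as $v=-A_2^{-1}v''-\a A_2^{-1}u$, to obtain $-\int_0^T\langle u,v\rangle=\int_0^T\langle A_2^{-1}u,v''\rangle+\a\int_0^T|A_2^{-1/2}u|^2$, whose last summand is precisely the target term (with constant one, hence independent of $\a$). Every other contribution carries a second time derivative, $u''$ or $v''$, and I would integrate these by parts in $t$. The boundary terms at $0$ and $T$ are estimated by $\mathcal{E}(U(0))$ and $\mathcal{E}(U'(0))$, using that the traces of $u''$, $v''$, $u'$, $v'$ sit at the level of $\mathcal{E}(U'(0))$ and that $t\mapsto\mathcal{E}(U(t))$ and $t\mapsto\mathcal{E}(U'(t))$ are nonincreasing --- the latter because $U'$ solves the same system, so \eqref{eq:dissi} applies to it as well. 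The interior integrals produced are then reduced, through $\a v=-(u''+A_1u+Bu')$ and its derivative $\a v'=-(u'''+A_1u'+Bu'')$, to integrals of $|u'|^2$ and $|u''|^2$, which the coercivity $B\ge\beta I$ bounds by $\int_0^T|u'|^2\le\beta^{-1}\mathcal{E}(U(0))$ and $\int_0^T|u''|^2\le\beta^{-1}\mathcal{E}(U'(0))$. Throughout, assumption \eqref{eq:theta} --- equivalently the boundedness of $A_1^{1/2}A_2^{-1}$ from Lemma~\ref{le:main} --- is what keeps the mixed operators $A_1^{-1/2}A_2^{-1}$ and $A_1^{1/2}A_2^{-1}$ bounded, so that the stray weak norms of $u$ collapse onto $\int_0^T|A_2^{-1/2}u|^2$.

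The hard part is that the integrations by parts leave an \emph{undamped} velocity remainder of the form $\int_0^T\langle u',(A_1^{-1}-A_2^{-1})v'\rangle$, and implicitly a potential integral $\int_0^T|A_1^{1/2}u|^2$; neither is dominated by the right-hand side on its own, since $v'$ and the potential energy of $u$ are not directly dissipated. The crux of the argument is that these are not bounded in isolation but are routed back to controlled quantities: substituting the differentiated first equation for $v'$ turns $(A_1^{-1}-A_2^{-1})v'$ into combinations of $u'''$, $A_1u'$ and $u''$, after which one further integration by parts in $t$, together with \eqref{eq:theta}, converts every surviving term into boundary contributions plus $\int_0^T|u'|^2$ and $\int_0^T|u''|^2$. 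This is the delicate bookkeeping step, and it is here that the two successive divisions by $\a$ (first to solve for $v$, then for $v'$) generate the weight $C/\a^2$ on $\mathcal{E}(U(0))+\mathcal{E}(U'(0))$. Finally, dividing the whole identity by $\a$ and using $\a^2<\o_1\o_2$ to keep the constant in front of $\int_0^T|A_2^{-1/2}u|^2$ independent of $\a$ yields \eqref{eq:est1}.
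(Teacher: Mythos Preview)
Your opening is exactly the paper's: test the first equation with $A_1^{-1}v$, then use the second equation (equivalently, subtract $\langle v''+A_2v+\a u,A_2^{-1}u\rangle$) to replace $\int\langle u,v\rangle$ by $\a\int|A_2^{-1/2}u|^2+\int\langle v'',A_2^{-1}u\rangle$. Where your plan diverges, and breaks, is in the treatment of the remaining cross terms $\int\langle u'',A_1^{-1}v\rangle$ and $\int\langle v'',A_2^{-1}u\rangle$. Integrating by parts once leaves $\int_0^T\langle u',(A_1^{-1}-A_2^{-1})v'\rangle\,dt$, and your substitution $\a v'=-(u'''+A_1u'+Bu'')$ then produces the term
\[
\frac{1}{\a}\int_0^T\langle u',A_2^{-1}A_1u'\rangle\,dt
=\frac{1}{\a}\int_0^T\langle u',(A_2^{-1}A_1^{1/2})A_1^{1/2}u'\rangle\,dt.
\]
Condition~\eqref{eq:theta} only gives that $A_2^{-1}A_1^{1/2}=(A_1^{1/2}A_2^{-1})^*$ is bounded, so the best pointwise bound is $C|u'|\,|A_1^{1/2}u'|$. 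But $\int_0^T|A_1^{1/2}u'|^2\,dt$ is \emph{not} controlled by $\mathcal{E}(U(0))+\mathcal{E}(U'(0))$ uniformly in $T$; the dissipation identity bounds $\int|u'|^2$ and $\int|u''|^2$, nothing stronger. No further integration by parts in $t$ helps, since $A_2^{-1}A_1$ is not symmetric and the integrand is not a total derivative. This is a genuine gap.

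The paper's remedy is short and avoids the substitution for $v'$ altogether: integrate by parts \emph{twice} in $\int_0^T\langle v'',A_2^{-1}u\rangle\,dt$ to obtain $\int_0^T\langle v,A_2^{-1}u''\rangle\,dt$ plus boundary terms $\big[\langle v',A_2^{-1}u\rangle-\langle v,A_2^{-1}u'\rangle\big]_0^T$, and leave $\int_0^T\langle u'',A_1^{-1}v\rangle\,dt$ untouched. Then write
\[
\langle v,A_2^{-1}u''\rangle=\langle A_1^{-1/2}v,\,A_1^{1/2}A_2^{-1}u''\rangle,
\qquad
\langle u'',A_1^{-1}v\rangle=\langle A_1^{-1/2}u'',A_1^{-1/2}v\rangle,
\]
use the boundedness of $A_1^{1/2}A_2^{-1}$, and apply Young's inequality with weight $\a/4$ on $|A_1^{-1/2}v|^2$ so that it can be absorbed on the left. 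All interior integrals are then controlled by $\int|u'|^2+\int|u''|^2$, hence by $\mathcal{E}(U(0))+\mathcal{E}(U'(0))$ via~\eqref{eq:dissi} applied to $U$ and $U'$; the term $A_1u'$ never enters.
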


\begin{proof}
Rewrite \eqref{eq:2z} as system \eqref{eq:1z} to obtain
\begin{multline*}
\int_0^T\langle u''+A_1u+Bu'+\a v,A_1^{-1}v\rangle dt
-\int_0^T\langle v''+A_2v+\a u,A_2^{-1}u\rangle dt=0\, .
\end{multline*}
Hence, by straightforward computations,
\begin{multline*}
\a\int_0^T|A_1^{-1/2}v|^2dt\leq\a\int_0^T|A_2^{-1/2}u|^2dt
\\
-\int_0^T\langle Bu',A_1^{-1}v\rangle dt+\int_0^T\big[\langle v'',A_2^{-1}u\rangle-
\langle u'',A_1^{-1}v\rangle\big]dt\,.
\end{multline*}
Integration by parts transforms the last inequality into
\begin{multline}\label{eq:lem1}
\a\int_0^T|A_1^{-1/2}v|^2dt\leq\a\int_0^T|A_2^{-1/2}u|^2dt
-\int_0^T\langle A_1^{-1/2}Bu',A_1^{-1/2}v\rangle dt
\\
+\int_0^T\big[\langle  A_1^{-1/2}v, A_1^{1/2}A_2^{-1}u''\rangle-
\langle  A_1^{-1/2}u'',A_1^{-1/2}v\rangle\big]dt
\\+
 \Big[\langle v',A_2^{-1}u\rangle-\langle v,A_2^{-1}u'\rangle\Big]_0^T\,.
\end{multline}
We now proceed to bound the right-hand side of \eqref{eq:lem1}. We have
\begin{equation*}
\left|\int_0^T\langle A_1^{-1/2}Bu',A_1^{-1/2}v\rangle dt\right|\leq
{\a\over 4}\int_0^T|A_1^{-1/2}v|^2dt+ {C\over\a} \int_0^T |B^{1/2}u'|^2dt\,.
\end{equation*}
Similarly, thanks to assumption \eqref{eq:theta} and the fact that $B$ is positive definite, 
\begin{equation*}
\left|\int_0^T\langle  A_1^{-1/2}v, A_1^{1/2}A_2^{-1}u''\rangle dt\right|\leq
{\a\over 4}\int_0^T|A_1^{-1/2}v|^2dt+ {C\over\a} \int_0^T |B^{1/2}u''|^2dt\, .
\end{equation*}
Also,
\begin{equation*}
\left|\int_0^T\langle  A_1^{-1/2}u'',A_1^{-1/2}v\rangle dt\right|\leq
{\a\over 4}\int_0^T|A_1^{-1/2}v|^2dt+ {C\over\a} \int_0^T |B^{1/2}u''|^2dt\,.
\end{equation*}
Finally, observe that the last term in \eqref{eq:lem1} can be bounded as follows
\begin{equation*}
\left| \Big[\langle v',A_2^{-1}u\rangle-\langle v,A_2^{-1}u'\rangle\Big]_0^T\right|
\leq C{\mathcal E}(U(0))\,.
\end{equation*}
Combining the above estimates with \eqref{eq:lem1}, we obtain
\begin{multline*}
\int_0^T|A_1^{-1/2}v|^2dt\leq C\int_0^T|A_2^{-1/2}u|^2dt
+{C\over\a}{\mathcal E}(U(0))
\\
+{C\over\a^2} \int_0^T \big[|B^{1/2}u'|^2+|B^{1/2}u''|^2\big]dt\, .
\end{multline*}
The conclusion follows from the above inequality and the dissipation identity \eqref{eq:dissi}
applied to $u$ and $u'$.
\end{proof}

\begin{lem}
Let $U=(u,u',v,v')$ be the solution of problem \eqref{eq:2z} with $U_0\in D(\mathcal A)$. Then
\begin{equation}\label{eq:est2}
\int_0^T|v|^2dt\leq C\a^2\int_0^T|u|^2dt+ {C\over\a^2}\sum_{k=1}^3{\mathcal E}(U^{(k)}(0))\, .
\end{equation}
\end{lem}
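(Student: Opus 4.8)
The plan is to extract $|v|^2$ directly from the second equation in \eqref{eq:1z} and then estimate the two resulting terms. Since $A_2v=-(v''+\a u)$, applying $A_2^{-1}$ gives $v=-A_2^{-1}v''-\a A_2^{-1}u$; taking the scalar product with $v$ and integrating yields the identity
\begin{equation*}
\int_0^T|v|^2\,dt=-\int_0^T\langle A_2^{-1}v'',v\rangle\,dt-\a\int_0^T\langle A_2^{-1}u,v\rangle\,dt\,.
\end{equation*}
The coupling term is harmless: using $|A_2^{-1}u|\le\o_2^{-1}|u|$ together with Young's inequality, one gets $\a\int_0^T|\langle A_2^{-1}u,v\rangle|\,dt\le\frac14\int_0^T|v|^2\,dt+C\a^2\int_0^T|u|^2\,dt$, which already produces the first term on the right-hand side of \eqref{eq:est2}, the copy of $\frac14\int|v|^2$ being absorbed at the end.

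The main difficulty is the term carrying $v''$. The naive move—an integration by parts in time—would convert it into $\int_0^T|A_2^{-1/2}v'|^2\,dt$, which \emph{cannot} be controlled by the data: the dissipation relation \eqref{eq:dissi} only tames time derivatives of the damped component $u$, not of $v$, and in fact $\int_0^T|v'|^2\,dt$ may grow linearly in $T$. I therefore keep the term as $\langle A_2^{-1}v'',v\rangle$ and estimate it in space alone, by Cauchy--Schwarz and Young: $\int_0^T|\langle A_2^{-1}v'',v\rangle|\,dt\le\frac14\int_0^T|v|^2\,dt+C\int_0^T|A_2^{-1}v''|^2\,dt$. The crux is then that $\int_0^T|A_2^{-1}v''|^2\,dt$ is controlled. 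To see this I would exploit Lemma~\ref{le:main}(b), namely $A_1^{1/2}A_2^{-1}\in\mathcal L(H)$: passing to the adjoint shows $A_2^{-1}A_1^{1/2}$ is bounded, so $|A_2^{-1}A_1^{1/2}y|\le c|y|$ for $y\in D(A_1^{1/2})$, and the choice $y=A_1^{-1/2}w$ gives the pointwise inequality $|A_2^{-1}w|\le c|A_1^{-1/2}w|$ for all $w\in H$. Hence $\int_0^T|A_2^{-1}v''|^2\,dt\le c^2\int_0^T|A_1^{-1/2}v''|^2\,dt$, trading the unwanted $A_2^{-1}$-norm of $v''$ for an $A_1^{-1/2}$-norm that the previous lemma can handle.

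To close the estimate I would observe that $U''=(u'',u''',v'',v''')$ solves the same system \eqref{eq:2z} with initial datum $\mathcal{A}^2U_0$ (so that $U_0\in D(\mathcal{A}^3)$ makes everything finite, the general case following by density), and apply inequality \eqref{eq:est1} to $U''$: $\int_0^T|A_1^{-1/2}v''|^2\,dt\le C\int_0^T|A_2^{-1/2}u''|^2\,dt+\frac{C}{\a^2}\big[\mathcal{E}(U''(0))+\mathcal{E}(U'''(0))\big]$. The leftover integral is bounded via \eqref{eq:dissi} applied to $u'$: since $|A_2^{-1/2}u''|^2\le\o_2^{-1}|u''|^2$ and $\beta\int_0^T|u''|^2\,dt\le\int_0^T|B^{1/2}u''|^2\,dt=\mathcal{E}(U'(0))-\mathcal{E}(U'(T))\le\mathcal{E}(U'(0))$, we get $\int_0^T|A_2^{-1/2}u''|^2\,dt\le C\,\mathcal{E}(U'(0))$, which is precisely the source of the $k=1$ term in \eqref{eq:est2}. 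Collecting the three estimates and absorbing the two copies of $\frac14\int_0^T|v|^2\,dt$ into the left-hand side gives the claim. The only point requiring care is the bookkeeping of the powers of $\a$—using $\a^2<\o_1\o_2$ to absorb the bare $\mathcal{E}(U'(0))$ into the $\a^{-2}$-weighted terms—so that the final constant has exactly the stated form.
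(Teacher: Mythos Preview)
Your proof is correct and follows essentially the same route as the paper: the starting identity, the treatment of the coupling term, the application of \eqref{eq:est1} to $U''$, and the use of the dissipation relation for $u'$ are all identical. The only cosmetic difference lies in how the compatibility condition \eqref{eq:theta} is invoked on the term $\langle A_2^{-1}v'',v\rangle$: the paper inserts $A_1^{-1/2}A_1^{1/2}$ on the $v$-side, writing $\langle A_1^{-1/2}v'',A_1^{1/2}A_2^{-1}v\rangle$ and using $A_1^{1/2}A_2^{-1}\in\mathcal L(H)$ directly, whereas you first bound $|A_2^{-1}v''|$ and then pass to $|A_1^{-1/2}v''|$ via the adjoint inequality $|A_2^{-1}w|\le c|A_1^{-1/2}w|$; both lead to the same estimate $\frac14\int_0^T|v|^2\,dt+C\int_0^T|A_1^{-1/2}v''|^2\,dt$.
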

\begin{proof}
Since $\langle v''+A_2v+\a u,A_2^{-1}v\rangle=0$, integrating over $[0,T]$ we have
\begin{equation}\label{eq:2.4}
\int_0^T|v|^2dt=-\a \int_0^T\langle v, A_2^{-1}u\rangle dt
-\int_0^T\langle  v'', A_2^{-1}v\rangle dt\,.
\end{equation}
The last term in the above identity can be bounded using assumption \eqref{eq:theta} and Lemma~\ref{le:main} as follows
\begin{multline}\label{eq:2.5}
\left|\int_0^T\langle  v'', A_2^{-1}v\rangle dt\right|
=\left|\int_0^T\langle  A_1^{-1/2}v'', A_1^{1/2}A_2^{-1}v\rangle dt\right|
\\
\le {1\over 4}\int_0^T|v|^2dt
+C\int_0^T|A_1^{-1/2}v''|^2dt\, .
\end{multline}
Now, applying \eqref{eq:est1} to $v''$ and \eqref{eq:dissi} to $u'$, we obtain
\begin{multline}\label{eq:2.6}
\int_0^T|A_1^{-1/2}v''|^2dt\leq C \int_0^T|A_1^{-1/2}u''|^2dt+
{C\over\a^2}\big[{\mathcal E}(U''(0))+{\mathcal E}(U'''(0))\big]
\\
\leq C{\mathcal E}(U'(0))+ {C\over\a^2}\big[{\mathcal E}(U''(0))+{\mathcal E}(U'''(0))\big]\, .
\end{multline}
On the other hand,
\begin{equation}\label{eq:2.7}
\left|\a \int_0^T\langle v, A_2^{-1}u\rangle dt\right|
\leq {1\over 4}\int_0^T|v|^2dt + C\a^2\int_0^T|u|^2dt\, .
\end{equation}
The conclusion follows combining \eqref{eq:2.4},\dots,\eqref{eq:2.7}. 
\end{proof}
Let us now complete the proof of Theorem~\ref{th:main}.

\begin{proof}[Proof of Theorem~\ref{th:main}]
To prove \eqref{eq:main0} it suffices to apply \eqref{eq:est2} to $v'$ and use the resulting estimate to bound the right-hand side of \eqref{eq:decay}.  Since $B$ is positive definite, the conclusion follows by the dissipation identity \eqref{eq:dissi}.
\end{proof}

\begin{rem}
$(i)$ Similar results can be obtained for systems of equations coupled with different coefficients, such as
\begin{equation}\label{eq:temp}
\begin{cases}
u''(t)+A_1u(t)+Bu'(t)+\a_1 v(t)=0
\\
v''(t)+A_2v(t)+\a_2 u(t)=0 \, .
\end{cases}
\end{equation}
In this case, $(H3)$ should be replaced with

\begin{enumerate}
\item[(H3')] $\a_1$, $\a_2$ are two real numbers such that $0< \a_1\a_2 < \o_1\o_2$.
\end{enumerate}
Let us explain how to adapt our approach to the case of $\a_1 \neq \a_2$, when $\a_1$, $\a_2 > 0$. The total energy is defined by
$$
\mathcal{E}(U) := \a_2 E_1(u,p) + \a_1 E_2(v,q) + \a_1 \a_2 \langle u,v \rangle\, ,
$$
where $E_1$ and $E_2$ are the energies of the two components, defined in \eqref{eq:enercompsz}. Moreover, for each $U\in\mathcal{H}$,
$$
\mathcal{E}(U) \ge \nu(\a_1,\a_2) \Big[\a_2 E_1(u,p) + \a_1 E_2(v,q)\Big]\, ,
$$
with $\nu(\a_1,\a_2) = 1 - (\a_1 \a_2)^{1/2}(\o_1 \o_2)^{-1/2} > 0$. Finally, for each $U_0\in D(\mathcal{A})$, the solution $U(t) = (u(t), p(t),v(t),q(t))$ of the first order evolution equation associated with system \eqref{eq:temp} satisfies
\begin{equation}
\frac d{dt}{\mathcal E}(U(t))
=-\a_2|B^{1/2}u'(t)|^2
\qquad \forall t\geq 0.
\end{equation}
In particular, $t\mapsto {\mathcal E}(U(t))$ is nonincreasing on $[0,\infty)$. From this point, reasoning as in the above proof, the reader can easily derive the conclusion of Theorem \ref{th:main}.

$(ii)$ Another interesting situation occurs when $\a_1 = 0$, that is, when the first equation of system \eqref{eq:temp} is damped, whereas the second component is undamped and weakly coupled with the first one. In this case there is no hope to stabilize the full system by one single feedback. Indeed, let $A_1 = A_2 =: A$  and consider the sequence of positive eigenvalues $(\o_k)_{k\ge 1}$ of $A$, satisfying $\o_k\to +\infty$, with associated eigenspaces $(Z_k)_{k\ge 1}$. Moreover, let $B = 2\beta I$, with $0 < \beta < \sqrt{\o_1}$, and $\lambda_k = \sqrt{\o_k - \beta^2}$. Then, the equation
\begin{equation}\label{eqrmku}
u''(t)+Au(t)+2\beta u'(t) =0
\end{equation}
with initial conditions
$$
u(0) = u^0 = \sum_{k\ge 1}u^0_k\, , \quad u'(0) = u^1 = \sum_{k\ge 1} u^1_k\, ,
$$
where $u^i_k\in Z_k$ for every $k\ge 1$, ($i= 1,\,2$), admits the solution
\begin{equation*}
u(t) = e^{-\beta t}\sum_{k\ge 1} \left[u^0_k \cos(\lambda_k t) + \frac{u^1_k + \beta u^0_k}{\lambda_k}\sin(\lambda_k t)\right]\, .
\end{equation*}
In particular, choosing $u^0\in Z_1$ and $u^1 \in Z_1$, we have that $u(t)$ lies in $Z_1$ for every $t\ge 0$.
On the other hand, the solution to
\begin{equation}\label{eqtempsuv}
v''(t)+Av(t)+\a u(t)=0
\end{equation}
is coupled with \eqref{eqrmku} only in the component in $Z_1$, while it is conservative in $Z_1^\perp$. More precisely, writing $v(t) = v_1(t) + v_2(t)\in Z_1 + Z_1^\perp$, equation \eqref{eqtempsuv} implies that
\begin{equation}\label{eq:temprmk}
\begin{cases}
v_1''(t)+ \o_1 v_1(t)+\a u(t)=0
\\
v_2''(t)+Av_2(t) = 0\, .
\end{cases}
\end{equation}
Therefore, taking $v(0) = v^0\notin Z_1$ and $v'(0) = v^1 \notin Z_1$, 
$$
E(v_2(t),v_2'(t)) = \frac{1}{2}\left(|v_2'(t)|^2 + \langle A v_2(t),v_2(t)\rangle \right) = E(v(0),v'(0))>0$$
for all $t\ge 0$.
So,  system \eqref{eq:temp} is not stabilizable.
\end{rem}

\section{Results with data in interpolation spaces}\label{se:intres}

When the initial data belong to an interpolation space between $\mathcal{H}$ and the domain of a power of $\mathcal{A}$ we can improve Corollary \ref{co:main1} as follows.

\begin{thm}\label{thm:intresult}
Assume $(H1),(H2),(H3)$ and \eqref{eq:theta}. If $U_0\in (\mathcal{H},D(\mathcal{A}^{4n}))_{\theta,2}$ for some $n\ge 1$ and $0<\theta < 1$, then the solution $U$ of problem \eqref{eq:2z}
satisfies 
\begin{equation}\label{eq:maind2}
\|U(t)\|_{\mathcal{H}}^2
\le \frac{c_{n,\theta}}{t^{n\theta}}\|U_0\|_{(\mathcal{H},D(\mathcal{A}^{4n}))_{\theta,2}}^2
\qquad \forall t>0
\end{equation}
for some constant $c_{n,\theta}>0$.
\end{thm}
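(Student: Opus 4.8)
The plan is to view the solution map $U_0\mapsto U(t)=e^{t\mathcal{A}}U_0$, for each fixed $t>0$, as a bounded linear operator and to interpolate two a priori estimates for it: a trivial one on $\mathcal{H}$ and the polynomial decay of Corollary~\ref{co:main1} on $D(\mathcal{A}^{4n})$. First I would rewrite these two estimates as operator-norm bounds. Recall from the preliminaries that $\|V\|_{\mathcal{H}}^2=(V|V)=2\mathcal{E}(V)$ for every $V\in\mathcal{H}$, and that the semigroup structure gives $U^{(k)}(0)=\mathcal{A}^kU_0$. Equipping $D(\mathcal{A}^{4n})$ with the (full) graph norm $\|U_0\|_{D(\mathcal{A}^{4n})}^2=\sum_{k=0}^{4n}\|\mathcal{A}^kU_0\|_{\mathcal{H}}^2$, Corollary~\ref{co:main1}(a) then reads, for $U_0\in D(\mathcal{A}^{4n})$,
\[
\|e^{t\mathcal{A}}U_0\|_{\mathcal{H}}^2 = 2\mathcal{E}(U(t)) \le \frac{2c_n}{t^n}\sum_{k=0}^{4n}\mathcal{E}(\mathcal{A}^kU_0) = \frac{c_n}{t^n}\sum_{k=0}^{4n}\|\mathcal{A}^kU_0\|_{\mathcal{H}}^2 = \frac{c_n}{t^n}\|U_0\|_{D(\mathcal{A}^{4n})}^2 ,
\]
so that $\|e^{t\mathcal{A}}\|_{\mathcal{L}(D(\mathcal{A}^{4n});\mathcal{H})}\le \sqrt{c_n}\,t^{-n/2}$. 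On the other hand, since $(e^{t\mathcal{A}})_{t\ge0}$ is a semigroup of contractions on $\mathcal{H}$, we have $\|e^{t\mathcal{A}}\|_{\mathcal{L}(\mathcal{H})}\le 1$.

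Next I would apply the interpolation estimate of Theorem~\ref{prop:intreswes} to $T=e^{t\mathcal{A}}$ with the choice $X_1=\mathcal{H}$, $D_1=D(\mathcal{A}^{4n})$ and $X_2=D_2=\mathcal{H}$. Since $e^{t\mathcal{A}}\in\mathcal{L}(\mathcal{H})\cap\mathcal{L}(D(\mathcal{A}^{4n});\mathcal{H})$, the theorem yields
\[
\|e^{t\mathcal{A}}\|_{\mathcal{L}\big((\mathcal{H},D(\mathcal{A}^{4n}))_{\theta,2};(\mathcal{H},\mathcal{H})_{\theta,2}\big)} \le \|e^{t\mathcal{A}}\|_{\mathcal{L}(\mathcal{H})}^{1-\theta}\,\|e^{t\mathcal{A}}\|_{\mathcal{L}(D(\mathcal{A}^{4n});\mathcal{H})}^{\theta} \le c_n^{\theta/2}\,t^{-n\theta/2}.
\]
To read this back as an $\mathcal{H}$-bound I would identify the target space: a direct computation of the $K$-functional $K(t,x,\mathcal{H},\mathcal{H})=\min\{1,t\}\,|x|_{\mathcal{H}}$ shows, for $0<\theta<1$, that $(\mathcal{H},\mathcal{H})_{\theta,2}=\mathcal{H}$ with $\|\cdot\|_{(\mathcal{H},\mathcal{H})_{\theta,2}}=C_\theta\,\|\cdot\|_{\mathcal{H}}$ for an explicit constant $C_\theta>0$ depending only on $\theta$.

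Combining the two steps, for every $U_0\in(\mathcal{H},D(\mathcal{A}^{4n}))_{\theta,2}$ I obtain
\[
\|U(t)\|_{\mathcal{H}} = C_\theta^{-1}\|e^{t\mathcal{A}}U_0\|_{(\mathcal{H},\mathcal{H})_{\theta,2}} \le C_\theta^{-1}c_n^{\theta/2}\,t^{-n\theta/2}\,\|U_0\|_{(\mathcal{H},D(\mathcal{A}^{4n}))_{\theta,2}},
\]
and squaring gives \eqref{eq:maind2} with $c_{n,\theta}=C_\theta^{-2}c_n^{\theta}$. I do not expect a serious conceptual obstacle here: once the two endpoint bounds are in hand the argument is the abstract interpolation machinery already assembled in Section~\ref{se:preliminaries}. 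The only points requiring genuine care are the bookkeeping identity $\mathcal{E}(U^{(k)}(0))=\tfrac12\|\mathcal{A}^kU_0\|_{\mathcal{H}}^2$ together with the choice of the full graph norm on $D(\mathcal{A}^{4n})$ (so that Corollary~\ref{co:main1} becomes a clean operator-norm estimate, any other equivalent graph norm changing only the final constant), and the decisive choice $X_2=D_2=\mathcal{H}$, which makes the interpolated target collapse to $\mathcal{H}$ itself rather than to some intermediate space. All constants produced depend only on $n$ and $\theta$, as claimed.
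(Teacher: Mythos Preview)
Your proposal is correct and follows exactly the approach indicated in the paper, which merely states that the result follows by applying the interpolation results of Section~\ref{se:preliminaries} to the operator $\Lambda_t(U_0)=e^{t\mathcal{A}}U_0$. You have simply spelled out the details the paper leaves implicit: the two endpoint bounds (contraction on $\mathcal{H}$, polynomial decay on $D(\mathcal{A}^{4n})$ via Corollary~\ref{co:main1}), the application of Theorem~\ref{prop:intreswes} with $X_2=D_2=\mathcal{H}$, and the identification $(\mathcal{H},\mathcal{H})_{\theta,2}=\mathcal{H}$.
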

\begin{proof}
The proof easily follows from the interpolation results recalled in Section 2 applied to the operator $\Lambda_t:\mathcal{H}\to \mathcal{H}$ defined by
\begin{displaymath}
\Lambda_t(U_0) = e^{t{\mathcal A}}U_0 \in\mathcal{H}
\end{displaymath}
for each $U_0\in \mathcal{H}$.
\end{proof}
Although $(\mathcal{H},D(\mathcal{A}^{4n}))_{\theta,2}$ is usually difficult to identify explicitly, we can single out important special cases where such an identification is possible. We need a preliminary result.

\begin{lem}\label{leminvtdiopa}
The operator $\mathcal{A}: D(\mathcal{A})\to \h$ is invertible, with $\mathcal{A}^{-1}$ bounded. Moreover, $\mathcal{A}$ is m-dissipative (thus, $\mathcal{A}$ generates a $\mathcal{C}_0$-semigroup of contractions on $\h$).
\end{lem}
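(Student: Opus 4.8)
The plan is to verify three facts in turn: that $\mathcal{A}$ is dissipative, that it is onto (so that $0\in\rho(\mathcal{A})$ with bounded inverse), and that $\lambda I-\mathcal{A}$ is onto for every $\lambda>0$. The Lumer--Phillips theorem, together with the density of $D(\mathcal{A})=D(A_1)\times D(A_1^{1/2})\times D(A_2)\times D(A_2^{1/2})$ in $\h$, then yields m-dissipativity and the generation of a $\mathcal{C}_0$-semigroup of contractions.

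Dissipativity is a direct computation in the scalar product $(\cdot|\cdot)$ of $\h$. For $U=(u,p,v,q)\in D(\mathcal{A})$ I would expand $(\mathcal{A}U|U)$ and use the self-adjointness of $A_1,A_2$, so that $\langle A_1^{1/2}p,A_1^{1/2}u\rangle=\langle A_1u,p\rangle$ and likewise for $A_2$. The two stiffness contributions then cancel against $-\langle A_1u,p\rangle$ and $-\langle A_2v,q\rangle$, while the coupling terms $\pm\a\langle p,v\rangle$, $\pm\a\langle q,u\rangle$ cancel in the real Hilbert space. What survives is $(\mathcal{A}U|U)=-\langle Bp,p\rangle\le-\beta|p|^2\le0$, which is exactly the dissipation identity \eqref{eq:dissi} with $p=u'$.

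For the range conditions I would treat all $\lambda\ge0$ simultaneously. Given $F=(f_1,f_2,f_3,f_4)\in\h$, the equation $(\lambda I-\mathcal{A})U=F$ forces $p=\lambda u-f_1$ and $q=\lambda v-f_3$, and reduces to the coupled elliptic system
\begin{equation*}
A_1u+\lambda^2u+\lambda Bu+\a v=g_1,\qquad \a u+A_2v+\lambda^2v=g_2,
\end{equation*}
with $g_1=f_2+\lambda f_1+Bf_1$ and $g_2=f_4+\lambda f_3$. I would solve this weakly on $V:=D(A_1^{1/2})\times D(A_2^{1/2})$, equipped with $\|(u,v)\|_V^2=|A_1^{1/2}u|^2+|A_2^{1/2}v|^2$, by Lax--Milgram applied to the symmetric bounded bilinear form
\begin{multline*}
a_\lambda\big((u,v),(\phi,\psi)\big)=\langle A_1^{1/2}u,A_1^{1/2}\phi\rangle+\langle A_2^{1/2}v,A_2^{1/2}\psi\rangle+\a\langle v,\phi\rangle\\
+\a\langle u,\psi\rangle+\lambda^2\langle u,\phi\rangle+\lambda^2\langle v,\psi\rangle+\lambda\langle Bu,\phi\rangle.
\end{multline*}
Coercivity is the crucial point, and it is precisely the estimate behind \eqref{2.6z}: since $2|\a|\,|u|\,|v|\le\frac{|\a|}{\sqrt{\o_1\o_2}}\big(|A_1^{1/2}u|^2+|A_2^{1/2}v|^2\big)$ by $(H1)$, and the terms $\lambda^2(|u|^2+|v|^2)+\lambda\langle Bu,u\rangle$ are nonnegative by $(H2)$, one gets $a_\lambda((u,v),(u,v))\ge\nu(\a)\|(u,v)\|_V^2$ with $\nu(\a)$ independent of $\lambda\ge0$. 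The functional $(\phi,\psi)\mapsto\langle g_1,\phi\rangle+\langle g_2,\psi\rangle$ is bounded on $V$ because $V\hookrightarrow H\times H$, so Lax--Milgram gives a unique weak solution $(u,v)\in V$.

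The step requiring the most care is upgrading this weak solution to a strong one. Testing with $\psi=0$ gives $\langle A_1^{1/2}u,A_1^{1/2}\phi\rangle=\langle g_1-\a v-\lambda^2u-\lambda Bu,\phi\rangle$ for all $\phi\in D(A_1^{1/2})$, which by the characterization of $D(A_1)$ forces $u\in D(A_1)$ with $A_1u=g_1-\a v-\lambda^2u-\lambda Bu$, and symmetrically $v\in D(A_2)$. Then $U=(u,\lambda u-f_1,v,\lambda v-f_3)\in D(\mathcal{A})$ solves $(\lambda I-\mathcal{A})U=F$. Taking $\lambda=0$ shows $\mathcal{A}$ is onto, hence (being injective by coercivity) invertible; moreover $\nu(\a)\|(u,v)\|_V^2\le a_0((u,v),(u,v))=\langle g_1,u\rangle+\langle g_2,v\rangle\le(|g_1|+|g_2|)\|(u,v)\|_V$, which together with \eqref{2.4z} to control $|f_1|,|f_3|$ by $\|F\|_{\h}$ yields $\|\mathcal{A}^{-1}F\|_{\h}\le C\|F\|_{\h}$. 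Taking any $\lambda>0$ proves the range condition, so that, combined with dissipativity, $\mathcal{A}$ is m-dissipative and generates a contraction semigroup.
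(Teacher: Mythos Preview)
Your proof is correct. The dissipativity computation, the reduction of the resolvent equation to a coupled elliptic system, the Lax--Milgram argument on $V=D(A_1^{1/2})\times D(A_2^{1/2})$, and the regularity upgrade are all sound; the coercivity constant $\nu(\a)$ is indeed uniform in $\lambda\ge0$, which is what makes the unified treatment work.

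The paper's proof differs in how it establishes surjectivity of $\mathcal{A}$. Rather than Lax--Milgram, the authors solve \eqref{eq:laxmilg} explicitly: they eliminate $v$ via $v=A_2^{-1}(g-\a u)$, arrive at $(I-\a^2A_1^{-1}A_2^{-1})u=A_1^{-1}(f-\a A_2^{-1}g)$, and invert the operator $I-\a^2A_1^{-1}A_2^{-1}$ by a Neumann series, using $(H3)$ to guarantee $\|\a^2A_1^{-1}A_2^{-1}\|<1$. For the maximality step they simply assert that the range of $\lambda I-\mathcal{A}$ equals $\h$ for some $\lambda>0$, without giving details. Your variational approach is less explicit but more self-contained: one argument handles $\lambda=0$ and $\lambda>0$ simultaneously, and it fills in the step the paper omits. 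The paper's approach, on the other hand, yields a closed formula for $\mathcal{A}^{-1}$, which could be useful if one needed to analyze it further.
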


\begin{proof}
For any $U = (u,p,v,q)$, $\widehat{U} = (\hat{u},\hat{p},\hat{v},\hat{q})\in \mathcal{H}$, the identity $\mathcal{A}U = \widehat{U}$ is equivalent to
$$
p  = \hat{u}\, ,\quad -A_1u - Bp - \a v = \hat{p}\, ,\quad q = \hat{v}\, ,\quad -A_2 v - \a u = \hat{q}\, .
$$
Hence, $p  = \hat{u} \in D(A_1^{1/2})$, $q = \hat{v}\in D(A_2^{1/2})$. So, in order to compute $\mathcal{A}^{-1}$ it suffices to solve the system
\begin{equation}\label{eq:laxmilg}
\begin{cases}
A_1u +\a v = f
\\
A_2v +\a u = g \, ,
\end{cases}
\end{equation}
for suitably chosen $f,\, g\in H$. Since $I-\a^2A_1^{-1}A_2^{-1}$ is invertible thanks to $(H3)$, it is easy to check that \eqref{eq:laxmilg} admits the solution
\begin{equation*}
\begin{cases}
\bar{u} = \left(I-\a^2A_1^{-1}A_2^{-1}\right)^{-1}A_1^{-1}(f - \a A_2^{-1}g) \in D(A_1)
\\
\bar{v} = A_2^{-1}(g - \a \bar{u})\in D(A_2) \, .
\end{cases}
\end{equation*}
Thus, $\mathcal{A}$ is invertible, and $\mathcal{A}^{-1}$ is bounded.
Moreover, $\mathcal{A}$ is dissipative, since
$$
(\mathcal{A} U| U) \le -\langle Bp,p\rangle_H \le -\beta |p|_H^2 \le 0\quad \forall\, U\in D(\mathcal{A})\, .
$$
In addition, it is easy to check that there exists $\lambda > 0$ such that the range of $\lambda I - \mathcal{A}$ equals $\h$. Thus, by the Lumer-Phillips Theorem (see, e.g., \cite[Theorem~4.3]{Pa}), $\mathcal{A}$ generates a $\mathcal{C}_0$-semigroup of contractions on $\h$.
\end{proof}

Applying Corollary \ref{rem:interp2}, we obtain the following result.

\begin{cor}\label{lem:intpows}
If $\theta k = m$, for some $0 < \theta < 1$ and $k,\,m\in\mathbb{N}$, then
\begin{equation}\label{intpowscra}
D(\mathcal{A}^m) = (\mathcal{H},D(\mathcal{A}^k))_{\theta,2}\, .
\end{equation}
\end{cor}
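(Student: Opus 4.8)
The plan is to obtain \eqref{intpowscra} as a direct application of Corollary~\ref{rem:interp2}, which was tailored precisely for this situation. Corollary~\ref{rem:interp2} asserts that if $A$ is the infinitesimal generator of a $\mathcal{C}_0$-semigroup of contractions on a Hilbert space, with $A^{-1}$ bounded, then $D(A^m) = (H,D(A^k))_{\theta,2}$ whenever $m = \theta k$ is an integer with $0 < \theta < 1$. So the entire content of the proof reduces to verifying that the operator $\mathcal{A}$ satisfies these two hypotheses on the Hilbert space $\mathcal{H}$, and then invoking the corollary with $H$ replaced by $\mathcal{H}$ and $A$ replaced by $\mathcal{A}$.

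Fortunately, both hypotheses have already been established in the immediately preceding Lemma~\ref{leminvtdiopa}. There it is shown that $\mathcal{A}^{-1}$ is bounded (via the explicit solution of the system \eqref{eq:laxmilg}, using that $I - \a^2 A_1^{-1}A_2^{-1}$ is invertible by $(H3)$), and that $\mathcal{A}$ is m-dissipative, hence generates a $\mathcal{C}_0$-semigroup of contractions on $\mathcal{H}$ by the Lumer--Phillips Theorem. Thus both standing assumptions of Corollary~\ref{rem:interp2} are in place.

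First I would recall that $\mathcal{H}$ is a Hilbert space under the scalar product $(\,\cdot\,|\,\cdot\,)$ introduced earlier, so that the Hilbert-space framework of Corollary~\ref{rem:interp2} applies verbatim. Then I would simply cite Lemma~\ref{leminvtdiopa} to supply the two required properties of $\mathcal{A}$, namely boundedness of $\mathcal{A}^{-1}$ and generation of a contraction semigroup. With these in hand, applying Corollary~\ref{rem:interp2} to $\mathcal{A}$ on $\mathcal{H}$, with the given $k,m\in\mathbb{N}$ and $\theta\in(0,1)$ satisfying $\theta k = m$, yields exactly the identity \eqref{intpowscra}. I do not anticipate any genuine obstacle here: the proof is essentially a one-line verification that the hypotheses of the abstract corollary are met. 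The only point requiring a moment's care is to note that Corollary~\ref{rem:interp2} is stated for an abstract Hilbert space $H$, and one must confirm that the specific space $\mathcal{H}$ and operator $\mathcal{A}$ legitimately play those roles — but this is immediate given Lemma~\ref{leminvtdiopa}.

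\begin{proof}
Recall that $\mathcal{H}$, equipped with the scalar product $(\,\cdot\,|\,\cdot\,)$, is a Hilbert space. By Lemma~\ref{leminvtdiopa}, the operator $\mathcal{A}$ has bounded inverse $\mathcal{A}^{-1}$ and generates a $\mathcal{C}_0$-semigroup of contractions on $\mathcal{H}$. Thus $\mathcal{A}$ satisfies the hypotheses of Corollary~\ref{rem:interp2} on the Hilbert space $\mathcal{H}$. Applying that corollary with $\mathcal{A}$ in place of $A$ and $\mathcal{H}$ in place of $H$, for the given $k,\,m\in\mathbb{N}$ and $0 < \theta < 1$ with $m = \theta k$, we obtain \eqref{intpowscra}.
\end{proof}
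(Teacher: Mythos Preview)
Your proposal is correct and matches the paper's approach exactly: the paper simply states ``Applying Corollary~\ref{rem:interp2}, we obtain the following result,'' and your proof supplies precisely the verification (via Lemma~\ref{leminvtdiopa}) that the hypotheses of Corollary~\ref{rem:interp2} are met.
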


\begin{rem}\label{rmkcnst}
In particular, let us take $k=4n\;(n\ge 1)$ and $\theta_j = \frac{j}{4n}$ for $j=1, \dots, 4n - 1$.  Then, \eqref{intpowscra}
yields
\begin{equation}\label{eq:wuppertal}
(\mathcal{H},D(\mathcal{A}^{4n}))_{\theta_j,2} = D(\mathcal{A}^{j})
\quad (j=1, \dots, 4n - 1)\,.
\end{equation}
Thus, applying Theorem \ref{thm:intresult} to the above values of $\theta_j$, one can show that, if $U_0\in D(\mathcal{A}^{j})$, then the associated solution $U(t)$ of problem \eqref{eq:2z} satisfies
$$
\|U(t)\|_{\mathcal{H}}^2
\le \frac{c_{n,j}}{t^{j/4}}\|U_0\|_{D(\mathcal{A}^{j})}^2
\qquad \forall t>0
$$
for some constant $c_{n,j}>0$. Moreover, we claim that $c_{n,j}$ can be chosen independent of $n$. Indeed, since $j\neq 4n$,  one can take the smallest positive $n_j$ such that $j< 4n_j$, and use \eqref{eq:wuppertal}  with $\theta_j = j/(4n_j)$ to conclude that $c_{n_j,j} = c_j$. As already mentioned in the introduction, this result can be compared with the one in~\cite[Proposition 3.1] {BEPS},  which was obtained by a different method.
\end{rem}

\begin{cor}\label{th:main*}
Assume $(H1),(H2),(H3)$ and \eqref{eq:theta}.
\begin{itemize}
\item[i)] If $U_0\in D({\mathcal A}^{n})$ for some $n\ge 1$, then the solution of \eqref{eq:2z} satisfies
\begin{equation}\label{eq:maind*}
{\mathcal E}(U(t))
\le \frac{c_n}{t^{n/4}}\sum_{k=0}^{n}{\mathcal E}(U^{(k)}(0))
\qquad \forall t>0
\end{equation}
for some constant $c_n>0$.
\item[ii)] If $U_0\in (\mathcal{H},D(\mathcal{A}^n))_{\theta,2}$ for some $n\ge 1$ where $0<\theta < 1$, then the solution of \eqref{eq:2z} satisfies
\begin{equation}\label{eq:maind2*}
\|U(t)\|_{\mathcal{H}}^2
\le \frac{c_{n,\theta}}{t^{n\theta/4}}\|U_0\|_{(\mathcal{H},D(\mathcal{A}^n))_{\theta,2}}^2
\qquad \forall t>0
\end{equation}
for some constant $c_{n,\theta}>0$.
\item[iii)] If $U_0\in D((-\mathcal{A})^\theta)$ for some $0<\theta < 1$, then the solution of problem \eqref{eq:2z}
satisfies
\begin{equation}\label{eq:interm}
\|U(t)\|_{\mathcal{H}}^2
\le \frac{c_{\theta}}{t^{\theta/4}}\|U_0\|_{D((-\mathcal{A})^\theta)}^2
\qquad \forall t>0
\end{equation}
for some constant $c_{\theta}>0$.
\end{itemize}
\end{cor}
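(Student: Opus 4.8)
The plan is to read all three statements as decay estimates for the single operator family $\Lambda_t := e^{t\mathcal{A}}$, and to obtain them by interpolating $\Lambda_t$ between the trivial contraction bound on $\mathcal{H}$ and the smoothing bounds on domains of powers of $\mathcal{A}$ already available from Corollary~\ref{co:main1} and Theorem~\ref{thm:intresult}. Throughout I will use two elementary dictionaries: first, $2\mathcal{E}(U)=\|U\|_{\mathcal{H}}^2$ (since $(U|U)=2\mathcal{E}(U)$); second, $U^{(k)}(0)=\mathcal{A}^k U_0$, so that $\sum_{k=0}^{n}\mathcal{E}(U^{(k)}(0))=\frac12\sum_{k=0}^{n}\|\mathcal{A}^k U_0\|_{\mathcal{H}}^2$, which we may take as (an equivalent realization of) the graph norm $\|U_0\|_{D(\mathcal{A}^n)}^2$. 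These turn each of i)--iii) into an operator-norm estimate for $\Lambda_t$.

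For part i), I would simply invoke Remark~\ref{rmkcnst}: choosing the least integer $n_j$ with $n<4n_j$ and applying Theorem~\ref{thm:intresult} with $\theta=n/(4n_j)$, together with the identification $(\mathcal{H},D(\mathcal{A}^{4n_j}))_{n/(4n_j),2}=D(\mathcal{A}^n)$ from Corollary~\ref{lem:intpows}, yields $\|U(t)\|_{\mathcal{H}}^2\le c_n\, t^{-n/4}\|U_0\|_{D(\mathcal{A}^n)}^2$, with $c_n$ independent of the auxiliary $n_j$. Rewriting both sides through the dictionaries above gives \eqref{eq:maind*}. The only point to verify is the exponent: Theorem~\ref{thm:intresult} with base $D(\mathcal{A}^{4n_j})$ produces the rate $t^{-n_j\theta}=t^{-n/4}$, which reflects the loss of one power of $t$ per four derivatives.

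Part ii) I would obtain by a direct interpolation of $\Lambda_t$. On one hand $\Lambda_t$ is a contraction, so $\|\Lambda_t\|_{\mathcal{L}(\mathcal{H})}\le 1$; on the other hand part i), recast as an operator bound, gives $\|\Lambda_t\|_{\mathcal{L}(D(\mathcal{A}^n);\mathcal{H})}\le c_n^{1/2}t^{-n/8}$. Applying Theorem~\ref{prop:intreswes} with $X_1=\mathcal{H}$, $D_1=D(\mathcal{A}^n)$ and $X_2=D_2=\mathcal{H}$, and using $(\mathcal{H},\mathcal{H})_{\theta,2}=\mathcal{H}$, I get $\|\Lambda_t\|_{\mathcal{L}((\mathcal{H},D(\mathcal{A}^n))_{\theta,2};\mathcal{H})}\le c_n^{\theta/2}t^{-n\theta/8}$; squaring yields \eqref{eq:maind2*}.

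Finally, part iii) is the special case $n=1$ of part ii), once $D((-\mathcal{A})^\theta)$ is identified with an interpolation space. By Lemma~\ref{leminvtdiopa} the operator $-\mathcal{A}$ is m-accretive with bounded inverse, so Proposition~\ref{propfractpow} (or Corollary~\ref{rem:interp2}) applies and gives $D((-\mathcal{A})^\theta)=(\mathcal{H},D(\mathcal{A}))_{\theta,2}$ with equivalent norms. Substituting this identification into \eqref{eq:maind2*} for $n=1$ yields \eqref{eq:interm}. I expect the only genuinely delicate step to be this last identification, as it is the one place where the abstract fractional-power machinery is invoked and where one must check the hypotheses of Proposition~\ref{propfractpow} (supplied precisely by Lemma~\ref{leminvtdiopa}); everything else is bookkeeping with the contraction bound and the equivalences between $\mathcal{E}$, the $\mathcal{H}$-norm, and the graph norms of powers of $\mathcal{A}$.
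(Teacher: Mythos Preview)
Your proposal is correct and follows essentially the same route as the paper's proof. Part~i) is obtained from Remark~\ref{rmkcnst} via the identification in Corollary~\ref{lem:intpows}; part~ii) follows by interpolating the semigroup between the contraction bound on $\mathcal{H}$ and the bound from part~i) on $D(\mathcal{A}^n)$, which is precisely what the paper means by ``following the proof of Theorem~\ref{thm:intresult}''; and part~iii) is reduced to part~ii) with $n=1$ after invoking Lemma~\ref{leminvtdiopa} and Proposition~\ref{propfractpow} to identify $D((-\mathcal{A})^\theta)=(\mathcal{H},D(\mathcal{A}))_{\theta,2}$, exactly as the paper does.
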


\begin{proof}
Points $i)$ and $ii)$ derive from Corollary \ref{co:main1} and following the proof of Theorem \ref{thm:intresult}, thanks to Remark \ref{rmkcnst}. In order to prove point $iii)$, first we deduce from Lemma~\ref{leminvtdiopa} that $-\mathcal{A}$ is invertible with bounded inverse. Moreover, it is m-accretive on $\mathcal{H}$, hence \eqref{eq:propfractpow} yields
$$
(\h,D(\mathcal{A}))_{\theta,2} = (\h,D(-\mathcal{A}))_{\theta,2} = D((-\mathcal{A})^\theta)
$$
for every $0< \theta < 1$. The conclusion follows applying  $ii)$ with
$n=1$.
\end{proof}

Under further assumptions, the norm in $(\h,D(\mathcal{A}))_{\theta,2}$ can be given a more explicit form.
For this purpose, for each $k\ge 0$ consider the space
$$
\mathcal{H}_k = D(A_1^{(k+1)/2})\times D(A_1^{k/2})\times D(A_2^{(k+1)/2})\times D(A_2^{k/2})\, .
$$
We recall the following result (see \cite[Lemma~3.1]{alcako02}).

\begin{lem}\label{domain}
Assume $(H1)$, and $(H2)$. Let $n\ge 1$ be such that
\begin{eqnarray}\label{domain1}
& BD(A_1^{(k+1)/2})\subset D(A_1^{k/2})&
\\
\label{domain2}
& D(A_1^{(k/2)+1})\subset D(A_2^{k/2})&
\\
\label{domain3}
& D(A_2^{(k/2)+1})\subset D(A_1^{k/2})&
\end{eqnarray}
for every integer $k$ satisfying $0< k\le n-1$. (no assumption is made if $n=1$). Then ${\mathcal H}_k\subset
D({\mathcal A}^k)$ for every $0\le k\le n$.
\end{lem}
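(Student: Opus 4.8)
The plan is to argue by induction on $k$, exploiting the recursive description $D(\mathcal{A}^{k+1})=\{U\in D(\mathcal{A}):\mathcal{A} U\in D(\mathcal{A}^k)\}$. The cases $k=0$ and $k=1$ require no hypotheses at all, which is consistent with the claim that nothing is assumed when $n=1$: indeed $\mathcal{H}_0=\mathcal{H}=D(\mathcal{A}^0)$, and $\mathcal{H}_1=D(A_1)\times D(A_1^{1/2})\times D(A_2)\times D(A_2^{1/2})$ coincides with $D(\mathcal{A})$ by the very definition of the domain of $\mathcal{A}$. For the inductive step I would assume $\mathcal{H}_k\subset D(\mathcal{A}^k)$ for some $1\le k\le n-1$ and prove $\mathcal{H}_{k+1}\subset D(\mathcal{A}^{k+1})$. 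By the recursive description it then suffices to show that every $U=(u,p,v,q)\in\mathcal{H}_{k+1}$ lies in $D(\mathcal{A})$ and that $\mathcal{A} U\in\mathcal{H}_k$, since the induction hypothesis places such an $\mathcal{A} U$ in $D(\mathcal{A}^k)$.

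Membership $U\in D(\mathcal{A})$ is immediate from the nesting $D(A_i^s)\subset D(A_i^t)$ for $s\ge t\ge 0$ (a consequence of the self-adjoint functional calculus), since for $k\ge 1$ the exponents $(k+2)/2$ and $(k+1)/2$ defining $\mathcal{H}_{k+1}$ dominate the exponents $1$ and $1/2$ defining $D(\mathcal{A})$. The heart of the step is to track how each operation in $\mathcal{A} U=(p,-A_1u-Bp-\a v,q,-A_2v-\a u)$ shifts fractional exponents, and to test the four components against the four factors of $\mathcal{H}_k$. The first and third components, $p$ and $q$, already belong to $D(A_1^{(k+1)/2})$ and $D(A_2^{(k+1)/2})$ directly from $U\in\mathcal{H}_{k+1}$. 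For the second component I would note that $A_1u\in D(A_1^{k/2})$ because $u\in D(A_1^{(k+2)/2})$ and $A_1$ maps $D(A_1^{s+1})$ into $D(A_1^s)$; then \eqref{domain1} gives $Bp\in D(A_1^{k/2})$ from $p\in D(A_1^{(k+1)/2})$, and \eqref{domain3} transfers $v\in D(A_2^{(k/2)+1})$ into $D(A_1^{k/2})$, so the whole second component lies in $D(A_1^{k/2})$. Symmetrically, for the fourth component $A_2v\in D(A_2^{k/2})$, while \eqref{domain2} sends $u\in D(A_1^{(k/2)+1})$ into $D(A_2^{k/2})$, yielding membership in $D(A_2^{k/2})$. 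This proves $\mathcal{A} U\in\mathcal{H}_k$ and closes the induction.

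I do not expect a genuine obstacle here: the argument is essentially exponent bookkeeping. The one point that needs care is the alignment of the index ranges. Each of the three inclusions \eqref{domain1}--\eqref{domain3} is invoked precisely at the value $k$ of the current step, and since the step runs over $1\le k\le n-1$ this matches exactly the range $0<k\le n-1$ for which the hypotheses are assumed. The two coupling inclusions \eqref{domain2} and \eqref{domain3} account for the off-diagonal terms $\a u$ and $\a v$, whereas \eqref{domain1} accounts for the damping term $Bp$; the diagonal terms $A_1u$ and $A_2v$ need no hypothesis, being controlled by the power-lowering property of $A_i$. The remaining subtlety is simply to keep the half-integer exponents straight throughout.
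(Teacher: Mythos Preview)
Your induction argument is correct and is exactly the natural approach: verify the base cases $k=0,1$ trivially, then for $1\le k\le n-1$ check that $\mathcal{A}$ maps $\mathcal{H}_{k+1}$ into $\mathcal{H}_k$ by examining the four components, invoking \eqref{domain1}--\eqref{domain3} at index $k$ to handle the terms $Bp$, $\alpha u$, $\alpha v$. The exponent bookkeeping is accurate and the index range $0<k\le n-1$ lines up precisely with the hypotheses.

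Note, however, that the present paper does not actually give a proof of this lemma: it is quoted from \cite[Lemma~3.1]{alcako02} and stated here without argument. Your proof is essentially the one found in that reference, so there is nothing to contrast.
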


In \cite{alcako02}, it is also shown  that ${\mathcal H}_k = D({\mathcal A}^k)$ for every $0\le k\le n$, provided \eqref{domain2} and \eqref{domain3} are replaced by the stronger assumptions
$$
\begin{array}{c}
 D(A_1^{(k+1)/2})\subset D(A_2^{k/2})
\\
 D(A_2^{(k+1)/2})\subset D(A_1^{k/2})
\end{array}
\qquad\mbox{for every}\quad 0< k\le n-1\,.
$$

Let $0 < \theta < 1$ and $k\ge 1$ be fixed. As a direct consequence of Theorem~\ref{prop:intreswes}, choosing appropriate spaces and
operator $T$, one can show that, if ${\mathcal H}_k$ is contained in $D({\mathcal A}^k)$, then $(\mathcal{H},\mathcal{H}_k)_{\theta,2}$ is contained in $(\mathcal{H},D(\mathcal{A}^k))_{\theta,2}$. Moreover, $(\mathcal{H},\mathcal{H}_k)_{\theta,2}$ equals
$$
\begin{array}{ccl}
\mathcal{H}_{k,\theta} & := & (D(A_1^{1/2}),D(A_1^{(k+1)/2}))_{\theta,2}\times (H,D(A_1^{k/2}))_{\theta,2} \\
 &  & \times (D(A_2^{1/2}),D(A_2^{(k+1)/2}))_{\theta,2}\times (H,D(A_2^{k/2}))_{\theta,2}\, .
\end{array}
$$
Notice that, since $A_i$ is self-adjoint and \eqref{eq:positivity} holds for $i = 1,\,2$, applying Theorem \ref{thmintspfractpow} we have, for every $0\le \alpha < \beta\;(i = 1,\,2)$,
$$
(D(A_i^\alpha),D(A_i^\beta))_{\theta,2} = D(A_i^{(1-\theta)\alpha + \theta\beta})\,.
$$
Therefore, $\mathcal{H}_{k,\theta}$ equals $D(A_1^{\frac{1}{2} + \frac{k}{2}\theta}) \times D(A_1^{\frac{k}{2}\theta}) \times D(A_2^{\frac{1}{2} + \frac{k}{2}\theta}) \times D(A_2^{\frac{k}{2}\theta})\, .$

Observing that, for initial data in $\mathcal{H}_{n,\theta}$, we can bound (above and below) the norm of $U_0$ by the norms of its components, we have the following.

\begin{cor}\label{th:resintspbetbound}
Assume $(H1),(H2),(H3)$ and \eqref{eq:theta}.

$1)$ If ${\mathcal H}_n\subset D({\mathcal A}^n)$ for some $n\ge 2$, then for each $U_0\in \mathcal{H}_n$ the solution $U$ of problem \eqref{eq:2z}
satisfies
\begin{equation}
\|U(t)\|_{\mathcal{H}}^2
\le \frac{c_{n}}{t^{n/4}}\|U_0\|_{\mathcal{H}_{n}}^2
\qquad \forall t>0
\end{equation}
for some constant $c_{n}>0$, where
$$
\|U_0\|_{\mathcal{H}_{n}}^2 = |u^0|_{D(A_1^{(n+1)/2})}^2 + |u^1|_{D(A_1^{n/2})}^2 + |v^0|_{D(A_2^{(n+1)/2})}^2 + |v^1|_{D(A_2^{n/2})}^2\, .
$$

$2)$ Let $n\ge 1$ and $0<\theta < 1$ be fixed. If ${\mathcal H}_n\subset D({\mathcal A}^n)$, then for every $U_0\in \mathcal{H}_{n,\theta}$ the solution $U$ of \eqref{eq:2z}
satisfies
\begin{equation}
\|U(t)\|_{\mathcal{H}}^2
\le \frac{c_{n,\theta}}{t^{n\theta/4}}\|U_0\|_{\mathcal{H}_{n,\theta}}^2
\qquad \forall t>0
\end{equation}
for some constant $c_{n,\theta}>0$, with
$$
\|U_0\|_{\mathcal{H}_{n,\theta}}^2 \asymp |u^0|_{D(A_1^{(1+n\theta)/2})}^2 + |u^1|_{D(A_1^{n\theta/2})}^2 + |v^0|_{D(A_2^{(1 + n\theta)/2})}^2 + |v^1|_{D(A_2^{n\theta/2})}^2\, ,
$$
where $\asymp$ stands for the equivalence between norms.
\end{cor}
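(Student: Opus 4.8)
The plan is to derive both statements by transporting the abstract decay rates of Corollary~\ref{th:main*} from the domains of powers of $\mathcal{A}$ (respectively, from the interpolation spaces $(\mathcal{H},D(\mathcal{A}^n))_{\theta,2}$) onto the concrete product spaces $\mathcal{H}_n$ and $\mathcal{H}_{n,\theta}$, using the identifications already recorded in the discussion preceding the statement. Two ingredients make this possible. First, the identity $\|U\|_{\mathcal{H}}^2=(U|U)=2\mathcal{E}(U)$, combined with $U^{(k)}(0)=\mathcal{A}^kU_0$, converts the energy estimates of Corollary~\ref{th:main*} into norm estimates. Second, the standing hypothesis $\mathcal{H}_n\subset D(\mathcal{A}^n)$: since $\mathcal{H}_n$ and $D(\mathcal{A}^n)$ are both Banach spaces continuously embedded in $\mathcal{H}$, the closed graph theorem upgrades the set-theoretic inclusion to a \emph{continuous} embedding $\mathcal{H}_n\hookrightarrow D(\mathcal{A}^n)$, i.e.\ $\|U_0\|_{D(\mathcal{A}^n)}\le C\|U_0\|_{\mathcal{H}_n}$.

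For part $1)$, I would apply Corollary~\ref{th:main*}~$i)$ with the integer $n$ to $U_0\in\mathcal{H}_n\subset D(\mathcal{A}^n)$. Rewriting the left-hand side as $\mathcal{E}(U(t))=\tfrac12\|U(t)\|_{\mathcal{H}}^2$ and the right-hand side as $\sum_{k=0}^n\mathcal{E}(U^{(k)}(0))=\tfrac12\sum_{k=0}^n\|\mathcal{A}^kU_0\|_{\mathcal{H}}^2=\tfrac12\|U_0\|_{D(\mathcal{A}^n)}^2$ turns the estimate into $\|U(t)\|_{\mathcal{H}}^2\le c_n\,t^{-n/4}\|U_0\|_{D(\mathcal{A}^n)}^2$. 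Absorbing the embedding constant $C$ into $c_n$ then gives the bound in terms of $\|U_0\|_{\mathcal{H}_n}^2$, whose explicit form is just the graph norm on the product $\mathcal{H}_n=D(A_1^{(n+1)/2})\times D(A_1^{n/2})\times D(A_2^{(n+1)/2})\times D(A_2^{n/2})$. (The restriction $n\ge2$ merely excludes the trivial case $n=1$, where $\mathcal{H}_1=D(\mathcal{A})$ and the claim reduces to Corollary~\ref{th:main*}~$i)$.)

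For part $2)$, I would instead invoke Corollary~\ref{th:main*}~$ii)$, which yields the rate $t^{-n\theta/4}$ for data in $(\mathcal{H},D(\mathcal{A}^n))_{\theta,2}$, and then replace this abstract space by $\mathcal{H}_{n,\theta}$. Applying Theorem~\ref{prop:intreswes} with $T$ the identity, $X_1=X_2=\mathcal{H}$, $D_1=\mathcal{H}_n$, $D_2=D(\mathcal{A}^n)$, the continuous inclusion $\mathcal{H}_n\hookrightarrow D(\mathcal{A}^n)$ lifts to $(\mathcal{H},\mathcal{H}_n)_{\theta,2}\hookrightarrow(\mathcal{H},D(\mathcal{A}^n))_{\theta,2}$. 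Together with the identification $(\mathcal{H},\mathcal{H}_n)_{\theta,2}=\mathcal{H}_{n,\theta}$ established before the statement (interpolation distributing over the finite product, each factor computed via Theorem~\ref{thmintspfractpow}), this gives $\|U_0\|_{(\mathcal{H},D(\mathcal{A}^n))_{\theta,2}}\le C\|U_0\|_{\mathcal{H}_{n,\theta}}$, and the estimate follows by substitution. The equivalence $\asymp$ for $\|U_0\|_{\mathcal{H}_{n,\theta}}^2$ is precisely the equivalence between the norm of a finite-product interpolation space and the sum of the component norms, with exponents $\tfrac{1+n\theta}{2}$ and $\tfrac{n\theta}{2}$ read off from the explicit form of $\mathcal{H}_{n,\theta}$.

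Because the interpolation identifications and Lemma~\ref{domain} are already in place, the argument is essentially bookkeeping. The only genuinely delicate point is ensuring that the inclusion $\mathcal{H}_n\subset D(\mathcal{A}^n)$ is continuous, so that the constants appearing in both parts are finite; I expect this to be the main obstacle and would settle it by the closed graph theorem, verifying that the inclusion has closed graph since convergence in either $\mathcal{H}_n$ or $D(\mathcal{A}^n)$ forces convergence in $\mathcal{H}$, where limits are unique.
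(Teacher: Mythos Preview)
Your proposal is correct and follows essentially the same route as the paper: the corollary is stated immediately after the paragraph that records the inclusion $(\mathcal{H},\mathcal{H}_n)_{\theta,2}\hookrightarrow(\mathcal{H},D(\mathcal{A}^n))_{\theta,2}$ (via Theorem~\ref{prop:intreswes} with $T$ the identity) and the identification $(\mathcal{H},\mathcal{H}_n)_{\theta,2}=\mathcal{H}_{n,\theta}$ through Theorem~\ref{thmintspfractpow}, and the paper treats it as an immediate consequence of these facts combined with Corollary~\ref{th:main*}. Your explicit appeal to the closed graph theorem to justify the continuity of $\mathcal{H}_n\hookrightarrow D(\mathcal{A}^n)$ fills in a detail the paper leaves implicit, but the overall argument is the same.
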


\section{Applications to PDEs}\label{se:applications}

In this section we describe some examples of systems of partial differential equations that can be studied by the results of this paper, but fail to satisfy the compatibility condition \eqref{theta2a}. We will hereafter denote by $\Omega$ a bounded domain in $\R^N$ with a sufficiently smooth boundary $\Gamma$. For $i=1,\dots,N$ we will denote by $\partial_i$ the partial derivative with respect to $x_i$ and by $\partial_t$ the derivative with respect to the time variable. We will also use the notation $H^k(\Omega),H^k_0(\Omega)$ for the usual Sobolev spaces with norm
$$
\|u\|_{k,\Omega}=\Big[\int_\Omega
\sum_{|p|\le k}
|D^p u|^2dx\Big]^{\frac 12}
\,,
$$
where we have set $D^p=\partial_1^{p_1}\cdots\partial_N^{p_N}$ for any multi-index
$p=(p_1,\dots,p_N)$.
Finally, we will denote by $C_\Omega>0$ the largest constant such that Poincar\'e's inequality 
\begin{equation}\label{poincare}
 C_\Omega\|u\|^2_{0,\Omega}\le \|\nabla u\|^2_{0,\Omega}
\end{equation}
holds true for any $u\in H^1_0(\Omega)$. To avoid too many notation, we denote in the same way the constant $C_\Omega$ such that

\begin{equation}\label{robinpoincare}
 C_\Omega\|u\|^2_{0,\Omega}\le \|\nabla u\|^2_{0,\Omega}+ \|u\|^2_{0, \Gamma}
\,,
\end{equation}
\noindent for all $u \in H^1{\Omega}$. In the following examples we take
\begin{equation*}\label{cond:set}
H = L^2(\Omega)\, ,\ B = \beta I\, .
\end{equation*}

\begin{example}\label{ex1}
Let $\beta,\,\lambda > 0,\,\a\in\mathbb{R}$, and consider the problem
\begin{equation}\label{sistex1}
\begin{array}{ll}
\begin{cases}
\partial^2_t u - \Delta u + \beta\partial_t u + \lambda u + \a v = 0 \\
\partial^2_t v -\Delta v + \a u = 0
\end{cases}
& \text{in}\ \Omega\times (0,+\infty)
\end{array}
\end{equation}
with boundary conditions
\begin{eqnarray}\label{bcex1}
\frac{\partial u}{\partial\nu}(\cdot,t) = 0\ \text{on}\ \Gamma\, ,\qquad v(\cdot,t) = 0\ \text{on}\ \Gamma\quad\forall t>0
\end{eqnarray}
and initial conditions
\begin{equation}\label{eq:initcond}
\begin{array}{ll}
\begin{cases}
u(x,0) = u^0(x) & u'(x,0) = u^1(x) \\
v(x,0) = v^0(x) & v'(x,0) = v^1(x)
\end{cases}
& x\in \Omega\, .
\end{array}
\end{equation}
The above system can be rewritten in abstract form taking
\begin{equation}
\begin{array}{c}
\displaystyle D(A_1) = \left\{u\in H^2(\Omega) : \frac{\partial u}{\partial\nu} = 0 \ \text{on}\ \Gamma\right\}\, ,\quad A_1 u = -\Delta u + \lambda u \, , \\
\displaystyle D(A_2) = H^2(\Omega)\cap H^1_0(\Omega)\, ,\quad A_2 v = -\Delta v\, .
\end{array}
\end{equation}
Notice that, in order to verify assumption $(H3)$, we shall choose $\a$ such that $0< |\a| < \left(C_\Omega(C_\Omega + \lambda)\right)^{1/2}$. Then,
\begin{multline*}
\left|\left< A_1 u,v\right>\right| = \displaystyle\left|\int_{\Omega}\nabla u \nabla v\, dx +  \lambda \int_{\Omega} u v\, dx\right| \\
 \le \displaystyle\left(\int_\Omega|\nabla u|^2\,dx\right)^{1/2} \left(\int_\Omega|\nabla v|^2\,dx\right)^{1/2} + \lambda\left(\int_\Omega u^2dx\right)^{1/2}\left(\int_\Omega v^2dx\right)^{1/2} \\
 \le \displaystyle c \left< A_1 u,u\right>^{1/2} |A_2 v|\, ,
\end{multline*}
where we have used the coercivity of $A_2$ and the well-known inequality
$$
\displaystyle\int_\Omega v^2+|\nabla v|^2\,dx \le c \int_\Omega|\Delta v|^2\,dx\quad \forall\, v\in H^2(\Omega)\cap H^1_0(\Omega)\, .
$$
Since condition \eqref{eqnequiv} is fulfilled, we get the following conclusions.

$i)$ If $(u^0,u^1,v^0,v^1) \in D(A_1)\times D(A_1^{1/2})\times D(A_2)\times D(A_2^{1/2})$, then the solution $U$ of problem \eqref{sistex1}-\eqref{bcex1}-\eqref{eq:initcond} satisfies 
\begin{equation}\label{eq:maind3}
E_1(u(t),u'(t)) + E_2(v(t),v'(t))
\le \frac{c}{t^{1/4}} \|U_0\|_{D({\mathcal A})}^2\qquad \forall t>0
\end{equation}
for some constant $c>0$. Moreover, there exists $c_1 > 0$ such that
$$
\|U_0\|_{D({\mathcal A})}^2 \le c_1\left(\|u^0\|_{2,\Omega}^2 + \|u^1\|^2_{1,\Omega} + \|v^0\|^2_{2,\Omega} + \|v^1\|^2_{1,\Omega}\right)\, .
$$

$ii)$ By point $ii)$ of Corollary \ref{th:main*}, if $U_0\in (\mathcal{H},D(\mathcal{A}^n))_{\theta,2}$ for some $0<\theta < 1$, $n\ge 1$, then the solution of \eqref{sistex1}-\eqref{bcex1}-\eqref{eq:initcond} satisfies 
\begin{equation}
E_1(u(t),u'(t)) + E_2(v(t),v'(t))
\le \frac{c_{n,\theta}}{t^{n\theta/4}}\|U_0\|_{(\mathcal{H},D(\mathcal{A}^n))_{\theta,2}}^2
\end{equation}
for every $t>0$ and some constant $c_{n,\theta} >0$. Moreover, point $iii)$ of Corollary~\ref{th:main*} ensures that, if $U_0\in D((-\mathcal{A})^\theta)$ for some $0<\theta < 1$, then
\begin{equation}
E_1(u(t),u'(t)) + E_2(v(t),v'(t))
\le \frac{c_{\theta}}{t^{\theta/4}}\|U_0\|_{D((-\mathcal{A})^\theta)}^2
\qquad \forall t>0
\end{equation}
for some constant $c_{\theta} >0$.

Of interest is the case when an operator fulfills different boundary conditions on proper subsets of $\Gamma$. For instance, let $\Gamma_0$ be an open subset of $\Gamma$ (with respect to the topology of $\Gamma$) and set $\Gamma_1=\Gamma \backslash \Gamma_0$. We assume that $\overline{\Gamma_0}\cap \overline{\Gamma_1} =\emptyset$. Consider the system \eqref{sistex1} with boundary conditions
\begin{equation}\label{bcex6}
\begin{array}{rl}
\displaystyle u(\cdot,t)  = 0\ \text{on}\ \Gamma_0\, ,&\ \displaystyle\frac{\partial u}{\partial\nu}(\cdot,t) = 0 \ \text{on } \Gamma\setminus \Gamma_0 \\
\displaystyle v(\cdot,t) & = 0\ \text{on}\ \Gamma
\end{array}
\qquad \forall t>0
\end{equation}
and initial conditions \eqref{eq:initcond}. Let us set
\begin{equation*}
\begin{array}{c}
\displaystyle 
D(A_1) = \left\{u\in H^2(\Omega): u  = 0\ \text{on}\ \Gamma_0\, ,\ \frac{\partial u}{\partial\nu} = 0 \ \text{on } \Gamma\setminus \Gamma_0\right\}\, , \\
A_1 u = -\Delta u\, .
\end{array}
\end{equation*}
Then, $\left|\left< A_1 u,v\right>\right| \le c \left< A_1 u,u\right>^{1/2} |A_2 v|$.
So, for $0< |\a| < \left(C_\Omega(C_\Omega + \lambda)\right)^{1/2}$, condition \eqref{eq:theta} is fulfilled, and the same conclusions $i)-ii)$ hold for problem \eqref{sistex1}-\eqref{bcex6}-\eqref{eq:initcond}.
\end{example}

\begin{example}\label{ex2}
Another interesting situation occurs while coupling two equa\-tions of different orders. Let $\beta, \lambda > 0,\,\a\in\mathbb{R}$, and consider the system
\begin{equation}\label{sistex2}
\begin{array}{ll}
\begin{cases}
\partial^2_t u + \Delta^2 u + \lambda u + \beta\partial_t u + \a v = 0 \\
\partial^2_t v -\Delta v + \a u = 0
\end{cases}
& \text{in}\ \Omega\times (0,+\infty)
\end{array}
\end{equation}
with boundary conditions
\begin{eqnarray}\label{bcex2}
\Delta u(\cdot,t) = 0 = \frac{\partial \Delta u}{\partial\nu}(\cdot,t)\ \text{on}\ \Gamma\, ,\qquad v(\cdot,t) = 0\ \text{on}\ \Gamma\quad \forall t>0
\end{eqnarray}
and initial conditions \eqref{eq:initcond}. Define
\begin{equation*}\label{operatorsex2}
\begin{array}{c}
\displaystyle D(A_1) = \left\{u\in H^4(\Omega) : \Delta u = 0 = \frac{\partial \Delta u}{\partial\nu}\ \text{on}\ \Gamma\right\}\, ,\quad A_1 u = \Delta^2 u + \lambda u\, , \\
\displaystyle D(A_2) = H^2(\Omega) \cap H^1_0(\Omega)\, ,\quad A_2 v = -\Delta v\, .
\end{array}
\end{equation*}
Suppose $0< |\a| < \lambda^{1/2} C_\Omega^{1/2}$, as required by $(H3)$. Observing that, for any $u\in D(A_1)$ and $v\in D(A_2)$,
\begin{equation*}
\left|\left< A_1 u,v\right>\right| = \displaystyle\left|\int_{\Omega}\Delta u \Delta v\, dx + 
\lambda\int_{\Omega} u v\, dx \right| 
 \le \displaystyle c \left< A_1 u,u\right>^{1/2} |A_2 v|\, ,
\end{equation*}
we conclude that condition \eqref{eqnequiv} is fulfilled. So, for every $U_0 \in D({\mathcal A})$, the solution $U$ of problem \eqref{sistex2}-\eqref{bcex2}-\eqref{eq:initcond} satisfies
\begin{equation}
E_1(u(t),u'(t)) + E_2(v(t),v'(t))
\le \frac{c}{t^{1/4}} \|U_0\|_{D({\mathcal A})}^2\qquad \forall t>0
\end{equation}
for some constant $c>0$. Moreover, there exists $c_1 > 0$ such that
$$
\|U_0\|_{D({\mathcal A})}^2 \le c_1\left(\|u^0\|_{4,\Omega}^2 + \|u^1\|^2_{2,\Omega} + \|v^0\|^2_{2,\Omega} + \|v^1\|^2_{1,\Omega}\right)\, .
$$
Note that we give in Example~\ref{xe1} another set of boundary conditions for the same symbols for the operators. It is interesting to see that both examples are treated for different classes of compatibility conditions, namely the present example satisfies the compatibility condition \eqref{eq:thetab}, whereas the example~\eqref{xe1} satisfies the compatibility condition \eqref{theta2a}. 
\end{example}

\begin{example}\label{ex3}
Let $\beta > 0$, $\a\in\R$, and consider the problem
\begin{equation}\label{eq:ex3}
\begin{array}{ll}
\begin{cases}
\partial^2_t u - \Delta u + \beta \partial_t u + \alpha v = 0 \\
\partial^2_t v -\Delta v + \alpha u = 0
\end{cases}
& \text{in}\ \Omega\times (0,+\infty)
\end{array}
\end{equation}
with boundary conditions
\begin{equation}\label{bcex3}
\begin{array}{rl}
\displaystyle\left(\frac{\partial u}{\partial\nu} + u\right)(\cdot,t)  &= 0\ \text{on}\ \Gamma \\
v(\cdot,t) & = 0\ \text{on}\ \Gamma
\end{array}
\qquad \forall t>0
\end{equation}
and initial conditions \eqref{eq:initcond}. Let us define
\begin{equation}\label{opex3}
\!\!\!\!\begin{array}{l}
\displaystyle D(A_1) = \left\{u\in H^2(\Omega): \frac{\partial u}{\partial\nu} + u = 0 \ \text{on}\ \Gamma\right\}\, ,\ A_1 u = -\Delta u\, , \\
\displaystyle D(A_2) = H^2(\Omega)\cap H^1_0(\Omega)\, ,\ A_2 v = -\Delta v\, ,
\end{array}
\end{equation}
and assume $0< |\a| < C_\Omega$. Observe that
\begin{multline*}
\left|\left< A_1 u,v\right>\right| = \displaystyle\left|\int_{\Omega}\nabla u \nabla v\, dx \right| \\
\le \displaystyle\left(\int_\Omega|\nabla u|^2\,dx\right)^{1/2} \left(\int_\Omega|\nabla v|^2\,dx\right)^{1/2} \le \displaystyle c \left< A_1 u,u\right>^{1/2} |A_2 v|\, ,
\end{multline*}
since
\begin{eqnarray*}
\left< A_1 u,u\right> = \int_\Omega|\nabla u|^2\,dx + \int_\Gamma |u|^2\,dS\, ,\qquad
\displaystyle \int_\Omega|\nabla v|^2\,dx \le c \int_\Omega|\Delta v|^2\,dx\, .
\end{eqnarray*}
Thus, condition \eqref{eq:theta} is fulfilled. So, the energy of the solution of problem \eqref{eq:ex3}-\eqref{bcex3}-\eqref{eq:initcond} satisfies
\begin{equation}
E_1(u(t),u'(t)) + E_2(v(t),v'(t))
\le \frac{c}{t^{1/4}} \|U_0\|_{D({\mathcal A})}^2\qquad \forall t>0
\end{equation}
for some constant $c>0$. Moreover, there exists $c_1 > 0$ such that
$$
\|U_0\|_{D({\mathcal A})}^2 \le c_1\left(|A_1 u^0|^2 + |A_1^{1/2}u^1|^2 + |A_2 v^0|^2 + |A_2^{1/2}v^1|^2\right)\, .
$$
\end{example}
Our next result show that the operators in Example \ref{ex3} do not fulfill the compatibility condition \eqref{theta2a}.

\begin{prop}\label{compcondns}
Let $A_1$, $A_2$ be defined as in \eqref{opex3}. Then for every $k\in \mathbb{N}$, $k\ge 2$, $D(A_2^{k/2})$ is not included in $D(A_1)$.
\end{prop}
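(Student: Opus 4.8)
The plan is to exploit the incompatibility of the boundary conditions defining $A_1$ and $A_2$. Observe first that, since $A_2$ is positive and self-adjoint, its fractional powers have nested domains, so $D(A_2^{k/2})\subset D(A_2^{1/2})=H^1_0(\Omega)$ for every $k\ge 2$; in particular every $w\in D(A_2^{k/2})$ satisfies the Dirichlet condition $w=0$ on $\Gamma$. If, in addition, $w\in D(A_1)$, then the Robin condition $\frac{\partial w}{\partial\nu}+w=0$ on $\Gamma$ forces $\frac{\partial w}{\partial\nu}=-w=0$ on $\Gamma$ as well. Hence, to disprove the inclusion it suffices to produce a single element of $D(A_2^{k/2})$ whose normal derivative does not vanish identically on $\Gamma$.

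Such an element is provided by spectral theory. Since $D(A_2)=H^2(\Omega)\cap H^1_0(\Omega)$ embeds compactly into $H$, the operator $A_2$ has compact resolvent and therefore a complete orthonormal system of eigenfunctions $(e_j)_{j\ge 1}$ with eigenvalues $\mu_j>0$. For each $j$ and every $s\ge 0$ one has $A_2^s e_j=\mu_j^s e_j\in H$, whence $e_j\in D(A_2^s)$; in particular the first eigenfunction $e_1$ lies in $D(A_2^{k/2})$ for all $k\ge 2$. By elliptic regularity and the smoothness of $\Gamma$, $e_1\in C^\infty(\overline{\Omega})$, so that its trace and normal derivative on $\Gamma$ are classically defined.

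First I would normalize $e_1$ to be strictly positive in $\Omega$; it then solves $-\Delta e_1=\mu_1 e_1>0$ with $e_1=0$ on $\Gamma$, so $e_1$ attains its minimum over $\overline{\Omega}$ precisely on $\Gamma$. Hopf's boundary-point lemma then yields $\frac{\partial e_1}{\partial\nu}<0$ at every point of $\Gamma$. Consequently $\frac{\partial e_1}{\partial\nu}+e_1=\frac{\partial e_1}{\partial\nu}\ne 0$ on $\Gamma$, so $e_1$ fails the Robin condition and $e_1\notin D(A_1)$. Since $e_1\in D(A_2^{k/2})\setminus D(A_1)$ for every $k\ge 2$, the desired non-inclusion follows, indeed with one witness valid for all $k$ at once.

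I expect the only genuine difficulty to be the non-vanishing of the normal derivative, which is exactly the content of Hopf's lemma applied to the positive first eigenfunction; the remaining ingredients, namely the nestedness of the domains $D(A_2^s)$ and the $C^\infty$ regularity up to $\Gamma$, are routine consequences of the spectral theorem and of elliptic regularity on a smooth domain. Should one wish to bypass the sign information, the same non-vanishing can instead be deduced from the unique continuation principle, which rules out a nonzero Dirichlet eigenfunction with simultaneously vanishing trace and normal derivative.
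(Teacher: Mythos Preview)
Your proof is correct and takes a genuinely different route from the paper's. The paper argues by contradiction for each fixed $k$: it constructs $v_0\in D(A_2^k)$ by solving the cascade $(-\Delta)^k v_0=1$ with iterated Dirichlet data, assumes $v_0\in D(A_1)$ (so $\partial v_0/\partial\nu=0$ on $\Gamma$), and then uses repeated integration by parts together with an induction on the auxiliary functions $v_i=(-\Delta)^i v_0$ to force one of them to vanish identically, contradicting $-\Delta v_{k-1}=1$. Your argument is considerably shorter and more conceptual: you exhibit a single witness, the first Dirichlet eigenfunction $e_1$, that lies in every $D(A_2^{k/2})$ simultaneously, and you rule out membership in $D(A_1)$ via Hopf's boundary-point lemma (or, alternatively, unique continuation). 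The advantage of your approach is its brevity and the fact that one element works for all $k$ at once; the paper's approach, by contrast, stays entirely within elementary integration-by-parts identities and avoids invoking the strong maximum principle or unique continuation, which makes it more self-contained but also more laborious.
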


\begin{proof}
Since $D(A_2^k)\subset D(A_2^{k/2})$ for every $k\in \mathbb{N}$, it is sufficient to prove that $D(A_2^k)$ is not included in $D(A_1)$ for every $k\in \mathbb{N}$, $k\ge 1$. For this purpose, let us fix $k\in \mathbb{N}$, $k\ge 1$, and consider the problem
\begin{equation}
\begin{cases}
(-\Delta)^k v_0 = 1 \\
v_0 = 0 = \Delta v_0 = \dots = \Delta^{k-1}v_0 \qquad \text{on } \Gamma\, .
\end{cases}
\end{equation}
Define the sequence $v_1,\, v_2,\,\dots, v_{k-1}$ by
\begin{equation}\label{eq:lapdom1}
\begin{array}{lll}
\begin{cases}
-\Delta v_0 = v_1 \\
v_{0_{\mid\Gamma}} = 0
\end{cases}
&
\dots\quad 
\begin{cases}
-\Delta v_{k-2} = v_{k-1} \\
v_{k-2_{\mid\Gamma}} = 0
\end{cases}
&
\begin{cases}
-\Delta v_{k-1} = 1 \\
v_{k-1_{\mid\Gamma}} = 0 \, .
\end{cases}
\end{array}
\end{equation}
We will argue by contradiction, assuming $D(A_2^{k})\subset D(A_1)$. Since $v_0$ belongs to $D(A_2)\cap D(A_1)$, we have $v_{0_{\mid\Gamma}} = 0 = \frac{\partial v_0}{\partial \nu}_{\mid\Gamma}$. Moreover, from the first system in \eqref{eq:lapdom1}, it follows that
\[
\int_\Omega v_1 dx = \int_\Omega (-\Delta v_0) dx = - \int_\Gamma \frac{\partial v_0}{\partial \nu} dS = 0\, .
\]
Hence, $\displaystyle\int_\Omega v_1 dx = 0$. Let us prove by induction that
\begin{equation}\label{eqcond1}
\int_\Omega \nabla v_{k-i}\nabla v_i dx = 0\, \quad\forall\, i = 1,\, 2,\dots , k-1\, .
\end{equation}
For $i = 1$ we have
$$
\int_\Omega \nabla v_{k-1}\nabla v_1 dx = \int_\Omega (-\Delta v_{k-1}) v_1 dx = \int_\Omega v_1 dx = 0\, ,
$$
since $v_{k-1_{\mid\Gamma}} = 0 = v_{1_{\mid\Gamma}}$. Now, let $i>1$ and suppose
$$
\int_\Omega \nabla v_{k-i}\nabla v_i dx = 0\, .
$$
Then,
$$
\begin{array}{rl}
0 = & \displaystyle\int_\Omega v_{k-i}(-\Delta v_i) dx = \int_\Omega v_{k-i}v_{i+1} dx \\
 = & \displaystyle\int_\Omega (-\Delta v_{k-i-1}) v_{i+1} dx = \int_\Omega \nabla v_{k-(i+1)}\nabla v_{i+1} dx \, .
\end{array}
$$
Thus, \eqref{eqcond1} holds for $i + 1$. Moreover, from \eqref{eqcond1} follows that
\begin{equation}\label{eqcond2}
\int_\Omega v_{k-i} v_{i+1} dx = 0\quad \forall\, i = 1,\, 2,\dots , k-1\, ,
\end{equation}
since
$$
\int_\Omega v_{k-i}v_{i+1} dx = \int_\Omega v_{k-i} (-\Delta v_{i}) dx = \int_\Omega \nabla v_{k-i}\nabla v_{i} dx = 0 \, .
$$
Now, let $k$ be even, say $k=2p$, $p\in\mathbb{N}^*$. Then, by \eqref{eqcond1} with $i=p$ we obtain
$$
\int_\Omega |\nabla v_p|^2 dx = 0\, ,\ \textrm{whence } v_p = 0\, .
$$
So, by a cascade effect,
$$
v_{p+1} = -\Delta v_p = 0\Rightarrow v_{p+2} = -\Delta v_{p+1} = 0 \Rightarrow \dots \Rightarrow v_{k-1} = -\Delta v_{k-2} = 0\, .
$$
Since $-\Delta v_{k-1} = 1$, we get a contradiction. If, on the contrary, $k$ is odd, i.e. $k = 2p+1$, then, applying \eqref{eqcond2} with $i=p$, we conclude that
$$
\int_\Omega |v_{p+1}|^2 dx = 0\, ,\ \textrm{whence } v_{p+1} = 0\, .
$$
Finally, we have that $v_{p+1} = v_{p+2} = \dots = v_{k-1} = 0$. Since $-\Delta v_{k-1} = 1$, we get a contradiction again. Therefore, $D(A_2^{k})$ is not included in $D(A_1)$.
\end{proof}

\begin{example}\label{ex4}
Given $\beta > 0$, $\a\in\R$, let us now consider the undamped Petrowsky equation coupled with the damped wave equation,
\begin{equation}\label{sistex4}
\begin{array}{ll}
\begin{cases}
\partial^2_t u - \Delta u + \beta\partial_t u + \alpha v = 0 \\
\partial^2_t v + \Delta^2 v + \alpha u = 0
\end{cases}
& \text{in}\ \Omega\times (0,+\infty)
\end{array}
\end{equation}
with Robin boundary conditions
\begin{equation}\label{bcex4u}
\displaystyle\left(\frac{\partial u}{\partial\nu} + u\right)(\cdot,t) = 0\ \text{on}\ \Gamma \qquad \forall t>0
\end{equation}
on $u$ and either
\begin{equation}\label{bcex4v1}
\displaystyle v(\cdot,t) = \Delta v(\cdot,t) = 0\ \text{on}\ \Gamma \qquad \forall t>0
\end{equation}
or
\begin{equation}\label{bcex4v2}
\displaystyle v(\cdot,t) = \frac{\partial v}{\partial\nu}(\cdot,t) = 0\ \text{on}\ \Gamma \qquad \forall t>0
\end{equation}
on $v$, with initial conditions \eqref{eq:initcond}. Define
\begin{equation*}\label{opex4}
\begin{array}{c}
\displaystyle D(A_1) = \left\{u\in H^2(\Omega): \frac{\partial u}{\partial\nu} + u = 0 \ \text{on}\ \Gamma\right\}\, ,\quad A_1 u = -\Delta u\, , \\
\displaystyle D(A_2) = \left\{v\in H^4(\Omega): v = \Delta v = 0 \ \text{on}\ \Gamma\right\}\, ,\quad A_2 v = \Delta^2 v
\end{array}
\end{equation*}
(with boundary conditions \eqref{bcex4v1} on $v$), or
\begin{equation*}
\displaystyle \tilde{D}(A_2) = \left\{v\in H^4(\Omega): v = \frac{\partial v}{\partial\nu} = 0 \ \text{on}\ \Gamma\right\}\, ,\quad A_2 v = \Delta^2 v
\end{equation*}
(with boundary conditions \eqref{bcex4v2} on $v$). Once again, we have
\begin{multline*}
\left|\left< A_1 u,v\right>\right| = \displaystyle\left|\int_{\Omega}\nabla u \nabla v\, dx \right| \\
\le \displaystyle\left(\int_\Omega|\nabla u|^2\,dx\right)^{1/2} \left(\int_\Omega|\nabla v|^2\,dx\right)^{1/2} \le \displaystyle c \left< A_1 u,u\right>^{1/2} |A_2 v|\, .
\end{multline*}
Thus, condition \eqref{eq:theta} is fulfilled and, for $0 < |\a| < C_\Omega^{3/2}$, the polynomial decay of the energy of solution to \eqref{sistex4}-\eqref{bcex4u}-\eqref{bcex4v1}-\eqref{eq:initcond} and \eqref{sistex4}-\eqref{bcex4u}-\eqref{bcex4v2}-\eqref{eq:initcond} follows as in Example \ref{ex1}.
\end{example}

\section{Improvement of previous results}\label{impr}

In this section we apply interpolation theory to extend the polynomial stability result of \cite{alcako02} to a larger class of initial data. We will denote by $j\ge 2$ the integer for which \eqref{theta2a} is satisfied.
As is shown in \cite[Theorem 4.2]{alcako02}, under assumptions $(H1),(H2),(H3)$ and \eqref{theta2a}, if $U_0\in D({\mathcal A}^{nj})$ for some integer $n\ge 1$, the solution $U$ of problem \eqref{eq:2z} satisfies 
\begin{equation}\label{3.2ter}
{\mathcal E}(U(t))
\le \frac{c_n}{t^n}\sum_{k=0}^{nj}{\mathcal E}(U^{(k)}(0))
\qquad \forall t>0
\end{equation}
for some constant $c_n>0$. We recall that assumption \eqref{theta2a} covers many si\-tua\-tions of interest for applications to systems of evolution equations. Indeed (see \cite[Section 5]{alcako02} for further details), this is the case for
\begin{itemize}
\item[i)] $(A_1,D(A_1)) =(A_2,D(A_2))$, where \eqref{theta2a} is fulfilled with $j=2$;
\item[ii)] $D(A_1) = D(A_2)$, with $j = 2$;
\item[iii)] $(A_2,D(A_2)) = (A_1^2,D(A_1^2))$, again with $j=2$;
\item[iv)] $(A_1,D(A_1)) = (A_2^2,D(A_2^2))$, with $j=4$.
\end{itemize}
The following result completes the analysis of \cite{alcako02}, taking the initial data in suitable interpolation spaces.

\begin{thm}\label{intresack}
Assume $(H1),(H2),(H3)$ and \eqref{theta2a}, and let $0<\theta < 1$, $n\ge 1$.
Then for every $U_0$ in $(\mathcal{H},D(\mathcal{A}^{nj}))_{\theta,2}$, the solution $U$ of \eqref{eq:2z}
satisfies 
\begin{equation}\label{eq:mainterdack}
\|U(t)\|_{\mathcal{H}}^2
\le \frac{c_{n,\theta}}{t^{n\theta}}\|U_0\|_{(\mathcal{H},D(\mathcal{A}^{nj}))_{\theta,2}}^2
\qquad \forall t>0
\end{equation}
for some constant $c_{n,\theta}>0$.
\end{thm}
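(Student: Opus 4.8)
The plan is to argue exactly as in the proof of Theorem~\ref{thm:intresult}, the only change being that the polynomial decay of Corollary~\ref{co:main1} is replaced by the decay estimate \eqref{3.2ter}, which is its analogue under the compatibility condition \eqref{theta2a}. For each fixed $t>0$ I would introduce the solution operator $\Lambda_t:\mathcal{H}\to\mathcal{H}$ given by $\Lambda_t U_0 = e^{t\mathcal{A}}U_0$, and then extract the two endpoint operator-norm bounds that feed the interpolation Theorem~\ref{prop:intreswes}.

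First I would record the trivial endpoint: by Lemma~\ref{leminvtdiopa}, $e^{t\mathcal{A}}$ is a contraction semigroup on $\mathcal{H}$, so $\|\Lambda_t\|_{\mathcal{L}(\mathcal{H})}\le 1$. Next I would view $\Lambda_t$ as a map $D(\mathcal{A}^{nj})\to\mathcal{H}$, where $D(\mathcal{A}^{nj})$ carries its graph norm, equivalent to $\sum_{k=0}^{nj}\|\mathcal{A}^k\cdot\|_{\mathcal{H}}^2$. Recalling that $U^{(k)}(0)=\mathcal{A}^k U_0$ and using the identity $(U|U)=2\mathcal{E}(U)$, the estimate \eqref{3.2ter} becomes
\[
\|\Lambda_t U_0\|_{\mathcal{H}}^2 = 2\mathcal{E}(U(t))\le \frac{2c_n}{t^n}\sum_{k=0}^{nj}\mathcal{E}(\mathcal{A}^k U_0)\le \frac{C}{t^n}\,\|U_0\|_{D(\mathcal{A}^{nj})}^2,
\]
which gives $\|\Lambda_t\|_{\mathcal{L}(D(\mathcal{A}^{nj});\mathcal{H})}\le C^{1/2}t^{-n/2}$.

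Then I would apply Theorem~\ref{prop:intreswes} with $X_1=\mathcal{H}$, $D_1=D(\mathcal{A}^{nj})$ and $X_2=D_2=\mathcal{H}$, using that interpolating $\mathcal{H}$ with itself returns $(\mathcal{H},\mathcal{H})_{\theta,2}=\mathcal{H}$. This yields $\Lambda_t\in\mathcal{L}\big((\mathcal{H},D(\mathcal{A}^{nj}))_{\theta,2};\mathcal{H}\big)$ together with the bound
\[
\|\Lambda_t\|_{\mathcal{L}((\mathcal{H},D(\mathcal{A}^{nj}))_{\theta,2};\mathcal{H})}\le \|\Lambda_t\|_{\mathcal{L}(\mathcal{H})}^{1-\theta}\,\|\Lambda_t\|_{\mathcal{L}(D(\mathcal{A}^{nj});\mathcal{H})}^{\theta}\le \frac{C^{\theta/2}}{t^{n\theta/2}}.
\]
Applying this to $U_0$ and squaring produces precisely \eqref{eq:mainterdack} with $c_{n,\theta}=C^{\theta}$.

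Since this is a direct transcription of the interpolation machinery already used for Theorem~\ref{thm:intresult}, I do not expect a genuine obstacle. The only points deserving mild care are the equivalence between the graph norm of $\mathcal{A}^{nj}$ and $\sum_{k=0}^{nj}\|\mathcal{A}^k\cdot\|_{\mathcal{H}}^2$ (valid because $\mathcal{A}$ is closed with bounded inverse, by Lemma~\ref{leminvtdiopa}), and the bookkeeping converting the energy $\mathcal{E}$ into the Hilbert norm through $(U|U)=2\mathcal{E}(U)$; neither is substantive.
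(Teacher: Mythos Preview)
Your proposal is correct and follows precisely the approach the paper intends: Theorem~\ref{intresack} is stated without a separate proof because it is obtained by repeating verbatim the interpolation argument of Theorem~\ref{thm:intresult} (apply Theorem~\ref{prop:intreswes} to $\Lambda_t=e^{t\mathcal{A}}$) with the decay input \eqref{3.2ter} in place of Corollary~\ref{co:main1}. Your bookkeeping on the graph norm and the relation $(U|U)=2\mathcal{E}(U)$ is accurate, and the endpoint bounds and the resulting exponent $n\theta$ are exactly as required.
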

Reasoning as in Remark \ref{rmkcnst}, one can derive estimate \eqref{3.2ter} also for $U_0\in D(\mathcal{A}^k)$, for every $k=1, \dots, nj - 1$, with decay rate $k/j$.

\begin{cor}\label{th:mainack*}
Assume $(H1),(H2),(H3)$ and $(\ref{theta2a})$.
\begin{itemize}
\item[i)] If $U_0\in D({\mathcal A}^{n})$ for some $n\ge 1$, then the solution of $(\ref{eq:2z})$ satisfies
\begin{equation}\label{eq:maindack*}
\|U(t)\|_\mathcal{H}^2
\le \frac{c_n}{t^{n/j}}\|U_0\|_{D(\mathcal{A}^n)}^2
\qquad \forall t>0
\end{equation}
for some constant $c_n>0$.
\item[ii)] If $U_0\in (\mathcal{H},D(\mathcal{A}^n))_{\theta,2}$ for some $n\ge 1$ and $0<\theta < 1$, then the solution of $(\ref{eq:2z})$ satisfies 
\begin{equation}\label{eq:maind2ack*}
\|U(t)\|_{\mathcal{H}}^2
\le \frac{c_{n,\theta}}{t^{n\theta/j}}\|U_0\|_{(\mathcal{H},D(\mathcal{A}^n))_{\theta,2}}^2
\qquad \forall t>0
\end{equation}
for some constant $c_{n,\theta}>0$.
\item[iii)] If $U_0\in D((-\mathcal{A})^\theta)$ for some $0<\theta < 1$, then the solution of problem \eqref{eq:2z}
satisfies
\begin{equation}\label{eq:intermimp}
\|U(t)\|_{\mathcal{H}}^2
\le \frac{c_{\theta}}{t^{\theta/j}}\|U_0\|_{D((-\mathcal{A})^\theta)}^2
\qquad \forall t>0
\end{equation}
for some constant $c_{\theta}>0$.
\end{itemize}
\end{cor}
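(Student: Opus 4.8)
The plan is to mirror the proof of Corollary~\ref{th:main*}, systematically replacing the exponent $4$ by $j$ and using \eqref{3.2ter} and Theorem~\ref{intresack} in the roles played there by Corollary~\ref{co:main1} and Theorem~\ref{thm:intresult}. As in the proof of Theorem~\ref{thm:intresult}, the whole argument is phrased through the operator $\Lambda_t:\mathcal{H}\to\mathcal{H}$, $\Lambda_t(U_0)=e^{t\mathcal{A}}U_0$, whose boundedness on the relevant interpolation spaces is read off from Theorem~\ref{prop:intreswes}.

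For part $i)$, first note that when $j\mid n$, say $n=mj$, estimate \eqref{3.2ter} applied with $m$ in place of $n$ gives directly $\mathcal{E}(U(t))\le c_m t^{-m}\sum_{k=0}^{n}\mathcal{E}(U^{(k)}(0))$, which is \eqref{eq:maindack*} once we recall that $\|U(t)\|_{\mathcal{H}}^2=2\mathcal{E}(U(t))$ and that $\|U_0\|_{D(\mathcal{A}^n)}^2$ is equivalent to $\sum_{k=0}^{n}\mathcal{E}(\mathcal{A}^k U_0)$. For general $n$ (not a multiple of $j$) I would argue as in Remark~\ref{rmkcnst}: choose the smallest integer $N\ge 1$ with $Nj>n$ and set $\theta=n/(Nj)\in(0,1)$, so that Corollary~\ref{lem:intpows} yields $(\mathcal{H},D(\mathcal{A}^{Nj}))_{\theta,2}=D(\mathcal{A}^{Nj\theta})=D(\mathcal{A}^{n})$, with equivalence of norms. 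Applying Theorem~\ref{intresack} with $N$ in place of $n$ and with this $\theta$, the decay exponent becomes $N\theta=n/j$, which is exactly \eqref{eq:maindack*}; the minimal choice of $N$ makes the resulting constant depend only on $n$ (and on the fixed $j$).

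Part $ii)$ then follows from $i)$ by one further interpolation, exactly as Theorem~\ref{intresack} was deduced from \eqref{3.2ter}. Rephrasing $i)$ as an operator bound, $\|\Lambda_t\|_{\mathcal{L}(D(\mathcal{A}^n);\mathcal{H})}\le C t^{-n/(2j)}$, and combining it with the contractivity estimate $\|\Lambda_t\|_{\mathcal{L}(\mathcal{H})}\le 1$ coming from Lemma~\ref{leminvtdiopa}, Theorem~\ref{prop:intreswes} (with $X_1=\mathcal{H}$, $D_1=D(\mathcal{A}^n)$, $X_2=D_2=\mathcal{H}$, and noting $(\mathcal{H},\mathcal{H})_{\theta,2}=\mathcal{H}$) gives
\[
\|\Lambda_t\|_{\mathcal{L}((\mathcal{H},D(\mathcal{A}^n))_{\theta,2};\mathcal{H})}\le \|\Lambda_t\|_{\mathcal{L}(\mathcal{H})}^{1-\theta}\,\|\Lambda_t\|_{\mathcal{L}(D(\mathcal{A}^n);\mathcal{H})}^{\theta}\le C^{\theta}\,t^{-n\theta/(2j)},
\]
which squares to \eqref{eq:maind2ack*}. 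For part $iii)$, Lemma~\ref{leminvtdiopa} guarantees that $-\mathcal{A}$ is invertible with bounded inverse and m-accretive, so Proposition~\ref{propfractpow} (via \eqref{eq:propfractpow}) gives $D((-\mathcal{A})^\theta)=(\mathcal{H},D(-\mathcal{A}))_{\theta,2}=(\mathcal{H},D(\mathcal{A}))_{\theta,2}$; applying part $ii)$ with $n=1$ then yields \eqref{eq:intermimp}.

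The only genuinely delicate points are bookkeeping ones: verifying that the interpolation identity of Corollary~\ref{lem:intpows} holds with \emph{equivalent} norms (so that $\|\cdot\|_{(\mathcal{H},D(\mathcal{A}^{Nj}))_{\theta,2}}$ may be replaced by $\|\cdot\|_{D(\mathcal{A}^n)}$ in part $i)$), and ensuring, through the minimal choice of $N$, that the constant in \eqref{eq:maindack*} does not blow up with the auxiliary parameter $N$---precisely the mechanism already exploited in Remark~\ref{rmkcnst}. I do not expect any substantial analytic obstacle, since the essential decay information is entirely contained in \eqref{3.2ter} and Theorem~\ref{intresack}, and the corollary merely redistributes it across spaces of intermediate smoothness.
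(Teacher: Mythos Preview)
Your proposal is correct and follows exactly the approach indicated in the paper: the paper gives no separate proof of this corollary, noting just before it that ``reasoning as in Remark~\ref{rmkcnst}'' yields the decay rate $k/j$ for $U_0\in D(\mathcal{A}^k)$, and implicitly expects the reader to mirror the proof of Corollary~\ref{th:main*} with $j$ in place of $4$. Your write-up spells out those steps---the use of Corollary~\ref{lem:intpows} for part~$i)$, interpolation via Theorem~\ref{prop:intreswes} for part~$ii)$, and Proposition~\ref{propfractpow} for part~$iii)$---in precisely the way the paper intends.
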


In particular, the previous fractional decay rates can be achieved for initial data in $\mathcal{H}_{n}$ or in $\mathcal{H}_{n,\theta}$, whenever $\mathcal{H}_{n}\subset D({\mathcal A}^n)$, as in Corollary \ref{th:resintspbetbound}. This happens, for instance, if any of the following conditions is satisfied:
\begin{itemize}
\item[i)] $(A_1,D(A_1)) =(A_2,D(A_2))$;
\item[ii)] $D(A_1) = D(A_2)$;
\item[iii)] $(A_2,D(A_2)) = (A_1^2,D(A_1^2))$.
\end{itemize}

Let us apply Corollary \ref{th:mainack*} to two examples from \cite{alcako02}.

\begin{example}
Given $\beta>0,\,\kappa > 0,\,\a\in\mathbb{R}$, let us study the problem
\begin{equation}\label{WW}
\left\{\begin{array}{l}
\partial_t ^2u-\Delta u+\beta\partial_t u+\kappa u + \a v = 0 
\\
\partial_t ^2v -\Delta  v+ \kappa v  + \a  u =0 
\end{array}\right.
\qquad\mbox{in}\qquad\Omega\times (0,+\infty)
\end{equation}
with boundary conditions
\begin{equation}\label{DBC}
u(\cdot,t)=0=v(\cdot,t)\qquad \mbox{on}\quad\Gamma\quad\forall t>0
\end{equation}
and initial conditions
\begin{equation}\label{ic2}
\left\{\begin{array}{ll}
\quad u(x,0)=u^0(x)\,,&\quad u'(x,0)=u^1(x)
\\
\quad v(x,0)=v^0(x)\,,&\quad v'(x,0)=v^1(x)
\end{array}\right.
\qquad x\in\Omega\,.
\end{equation}
Let $H=L^2(\Omega)$, $B=\beta I$, and $A_1 = A_2 = A$ be defined by 
$$
D(A)=H^2(\Omega)\cap H^1_0(\Omega)\,,\qquad
Au=-\Delta u+\kappa u\quad\forall u\in D(A)\,.
$$
Notice that \eqref{theta2a} is fulfilled with $j=2$, and condition $0<|\a|<C_\Omega+\kappa=:\omega$ is required in order to fulfill $(H3)$.

As showed in \cite[Example 6.1]{alcako02}, if $u^0,v^0\in H^2(\Omega)\cap H^1_0(\Omega)$ and $u^1,v^1\in H^1_0(\Omega)$, then
\begin{displaymath}
\begin{array}{l}
\displaystyle \int_\Omega
\Big(|\partial_t u|^2+|\nabla u|^2+|\partial_t v|^2+|\nabla v|^2\Big)dx \\
\qquad\qquad \displaystyle \le \frac{c}{t}
\Big(\|u^0\|^2_{2,\Omega}+ \|u^1\|^2_{1,\Omega}+\|v^0\|^2_{2,\Omega}+ \|v^1\|^2_{1,\Omega} \Big)
\qquad\forall t>0\,.
\end{array}
\end{displaymath}
Moreover, if $u^0,v^0\in H^{n+1}(\Omega)$ and $u^1,v^1\in H^n(\Omega)$ are such that
$$
u^0=\dots =\Delta^{[\frac{n}{2}]}u^0=0=v^0=\dots =\Delta^{[\frac{n}{2}]}v^0\quad \mbox{on}\quad\Gamma,
$$
$$
u^1=\dots =\Delta^{[\frac{n-1}{2}]}u^1=v^1=\dots =\Delta^{[\frac{n-1}{2}]}v^1=0\quad\mbox{on}\quad\Gamma,
$$
then 
\begin{displaymath}
\begin{array}{l}
\displaystyle\int_\Omega\Big(|\partial_t u|^2+|\nabla u|^2+|\partial_t v|^2+|\nabla v|^2\Big)dx \\
\qquad \qquad \le \displaystyle
\frac{c_n }{t^n}
\Big(\|u^0\|^2_{n+1,\Omega}+ \|u^1\|^2_{n,\Omega}+\|v^0\|^2_{n+1,\Omega}+ \|v^1\|^2_{n,\Omega} \Big)
\qquad\forall t>0\,.
\end{array}
\end{displaymath}

Furthermore, applying Corollary \ref{th:mainack*}, we conclude that if $U_0$ belongs to $\mathcal{H}_{n,\theta} = (\mathcal{H},D(\mathcal{A}^n))_{\theta,2}$ for some $0 < \theta < 1$, $n\ge 1$, then the solution to \eqref{WW}-\eqref{DBC}-\eqref{ic2} satisfies
\begin{equation}
\displaystyle\int_\Omega\Big(|\partial_t u|^2+|\nabla u|^2+|\partial_t v|^2+|\nabla v|^2\Big)dx \le \displaystyle
\frac{c_{n,\theta}}{t^{n\theta/2}}\|U_0\|_{\mathcal{H}_{n,\theta}}^2
\qquad \forall t>0
\end{equation}
for some constant $c_{n,\theta}>0$, with
$$
\|U_0\|_{\mathcal{H}_{n,\theta}}^2 \asymp |u^0|_{D(A_1^{\frac{1}{2} + \frac{n}{2}\theta})}^2 + |u^1|_{D(A_1^{\frac{n}{2}\theta})}^2 + |v^0|_{D(A_2^{\frac{1}{2} + \frac{n}{2}\theta})}^2 + |v^1|_{D(A_2^{\frac{n}{2}\theta})}^2\, .
$$
\end{example}

\begin{example}\label{xe1}
Taking $\beta > 0,\, 0 < |\a| < C_\Omega^{3/2}$, and the same operators $A_1$ and $A_2$ as in Example~\ref{ex2}, but with different boundary conditions, we can consider the system 
\begin{equation}\label{WW2}
\left\{\begin{array}{l}
\partial_t ^2u+\Delta^2 u+\beta\partial_t u+\alpha v=0 
\\
\partial_t ^2v -\Delta  v+ \alpha u =0 
\end{array}\right.
\qquad\mbox{in}\qquad\Omega\times (0,+\infty)
\end{equation}
with boundary conditions
\begin{equation}\label{DBC2}
v(\cdot,t)=u(\cdot,t)=\Delta u(\cdot,t)=0\qquad \mbox{on}\quad\Gamma\quad\forall t>0
\end{equation}
and initial conditions as in \eqref{ic2}. 
Let us set $H=L^2(\Omega)$, $B=\beta I$, and 
\begin{equation*}
\begin{array}{c}
\displaystyle D(A_1) = \left\{u\in H^4(\Omega) : \Delta u = 0 = u\ \text{on}\ \Gamma\right\}\, ,\quad A_1 u = \Delta^2 u\, , \\
\displaystyle D(A_2) = H^2(\Omega)\cap H^1_0(\Omega)\, ,\quad A_2 v = -\Delta v\, .
\end{array}
\end{equation*}
In this case, since $A_1=A_2^2$, condition (\ref{theta2a}) holds
with
$j=4$. Consequently, as is shown in \cite[Example 6.4]{alcako02}, for initial condition $U_0\in D({\mathcal A}^4)$
$$
\int_\Omega
\Big(|\partial_t u|^2+|\Delta u|^2+|\partial_t v|^2+|\nabla v|^2\Big)dx
\le\frac{C}{t} \|U_0\|_{D({\mathcal A}^4)}^2
\qquad \forall t>0\, ,
$$
for some constant $C>0$. By point $i)$ of Corollary \ref{th:mainack*}, we can generalize this result to initial data $U_0\in D({\mathcal A}^{n})$ for some $n\ge 1$. Indeed, in this case the solution to \eqref{WW2}-\eqref{DBC2}-\eqref{ic2} satisfies
$$
\int_\Omega
\Big(|\partial_t u|^2+|\Delta u|^2+|\partial_t v|^2+|\nabla v|^2\Big)dx
\le \frac{c_n}{t^{n/4}} \|U_0\|_{D({\mathcal A}^n)}^2
\qquad \forall t>0\, ,
$$
for some constant $c_n>0$. Moreover, thanks to point $ii)$ of Corollary~\ref{th:mainack*}, if $U_0\in (\mathcal{H},D(\mathcal{A}^n))_{\theta,2}$ for some $n\ge 1$ and $0<\theta < 1$, then
$$
\int_\Omega
\Big(|\partial_t u|^2+|\Delta u|^2+|\partial_t v|^2+|\nabla v|^2\Big)dx
\le \frac{c_{n,\theta}}{t^{n\theta/4}}\|U_0\|_{(\mathcal{H},D(\mathcal{A}^n))_{\theta,2}}^2
\qquad \forall t>0
$$
for some constant $c_{n,\theta}>0$. Furthermore, thanks to point $iii)$ of Corollary~\ref{th:mainack*}, if $U_0$ belongs to $\mathcal{H}_{1,\theta} = D((-\mathcal{A})^\theta)$ for some $0 < \theta < 1$, then the solution to \eqref{WW2}-\eqref{DBC2}-\eqref{ic2}
satisfies
\begin{equation}
\int_\Omega
\Big(|\partial_t u|^2+|\Delta u|^2+|\partial_t v|^2+|\nabla v|^2\Big)dx
\le \frac{c_{\theta}}{t^{\theta/4}}\|U_0\|_{D((-\mathcal{A})^\theta)}^2
\qquad \forall t>0
\end{equation}
for some constant $c_{\theta}>0$, with
$$
\|U_0\|_{D((-\mathcal{A})^\theta)}^2 \asymp |u^0|_{D(A_1^{\frac{1}{2} + \frac{1}{2}\theta})}^2 + |u^1|_{D(A_1^{\frac{1}{2}\theta})}^2 + |v^0|_{D(A_2^{\frac{1}{2} + \frac{1}{2}\theta})}^2 + |v^1|_{D(A_2^{\frac{1}{2}\theta})}^2\, .
$$
\end{example}

\section*{Acknowledgments} We are grateful to the referees for their valuable comments and suggestions.


\medskip
Received xxxx 20xx; revised xxxx 20xx.
\medskip


\begin{thebibliography}{99}

\bibitem{A} (MR1696198)
		\newblock F. Alabau,
		\newblock \emph{Stabilisation fronti\`ere indirecte de syst\`emes faiblement coupl\'es},
		\newblock C. R. Acad. Sci. Paris S\'er. I Math. \textbf{328} (1999), 1015--1020.

\bibitem{A2} (MR1920269)
		\newblock F. Alabau-Boussouira,
		\newblock \emph{Indirect boundary stabilization of weakly coupled hyperbolic systems},
		\newblock SIAM J. Control Optim. \textbf{41} (2002), no. 2, 511--541.

\bibitem{A1} (MR2374204)
		\newblock F. Alabau-Boussouira,
		\newblock \emph{Asymptotic behavior for Timoshenko beams subject to a single nonlinear feedback control},
		\newblock NoDEA Nonlinear Differential Equations Appl. \textbf{14} (2007), no. 5-6, 643--669.
		
\bibitem{alcako02} (MR1914654)
		\newblock F. Alabau, P. Cannarsa and V. Komornik,
		\newblock \emph{Indirect internal stabilization of weakly coupled evolution equations},
		\newblock J. evol. equ. \textbf{2} (2002), 127--150.

\bibitem{alleau11} 
		\newblock F. Alabau-Boussouira and M. L\'eautaud,
		\newblock \emph{Indirect stabilization of locally coupled wave-type systems},
		\newblock ESAIM COCV. In press.

\bibitem{BA} (MR1825871)
		\newblock F. Ammar Khodja and A. Bader,
		\newblock \emph{Stabilizability of systems of one-dimensional wave equations by one internal or boundary control force},
		\newblock SIAM J. Control Optim. \textbf{39} (2001), no. 6, 1833--1851.
		
\bibitem{AMR} (MR2001030)
		\newblock F. Ammar-Khodja, A. Benabdallah, J.E. Mu�oz Rivera and R. Racke,
		\newblock \emph{Energy decay for Timoshenko systems of memory type},
		\newblock J. Differential Equations \textbf{194} (2003), no. 1, 82--115.

\bibitem{AT} (MR2434971)
		\newblock G. Avalos and R. Triggiani,
		\newblock \emph{Uniform stabilization of a coupled PDE system arising in fluid-structure interaction with boundary dissipation at the interface},
		\newblock Discrete Contin. Dyn. Syst. \textbf{22} (2008), no. 4, 817--833.
		
\bibitem{ALT} (MR2588546)
		\newblock G. Avalos, I. Lasiecka and R. Triggiani,
		\newblock \emph{Beyond lack of compactness and lack of stability of a coupled parabolic-hyperbolic fluid-structure system},
		\newblock Optimal control of coupled systems of partial differential equations, 1�33, Internat. Ser. Numer. Math., 158, Birkh�user Verlag, Basel, 2009.
		
\bibitem{BEPS} (MR2269247)
		\newblock A. Batkai, K.J. Engel, J.Pr\"uss and R. Schnaubelt,
		\newblock \emph{Polynomial stability of operator semigroups},
		\newblock Math. Nachr. \textbf{279} (2006), 1425--1440.
		
\bibitem{BD} (MR2460938)
		\newblock C. J. K. Batty and T. Duyckaerts,
		\newblock \emph{Non-uniform stability for bounded semi-groups on Banach spaces},
		\newblock J. Evol. Equ. \textbf{8} (2008), no. 4, 765--780.
		
\bibitem{BPDM} (MR2273323)
		\newblock A. Bensoussan, G. Da Prato, M. C. Delfour and S. K. Mitter,
		\newblock ``Representation and Control of Infinite Dimensional Systems,"
    \newblock 2$^{nd}$ edition, Birkh\"auser Boston, 2007.
				
\bibitem{Be1} (MR2190130)
		\newblock A. Beyrath,
		\newblock \emph{Indirect linear locally distributed damping of coupled systems},
		\newblock Bol. Soc. Parana. Mat. \textbf{(3) 22} (2004), no. 2, 17--34.

\bibitem{Be2} (MR1859235)
		\newblock A. Beyrath,
		\newblock \emph{Indirect internal observability stabilization of coupled systems with locally distributed damping},
		\newblock C. R. Acad. Sci. Paris S�r. I Math. \textbf{333} (2001), no. 5, 451--456.

\bibitem{BT} (MR2606945)
		\newblock A. Borichev and Y. Tomilov,
		\newblock \emph{Optimal polynomial decay of functions and operator semigroups},
		\newblock Math. Ann. \textbf{347} (2010), no. 2, 455--478.

\bibitem{BO} (MR2375750)
		\newblock M. Boulakia and A. Osses,
		\newblock \emph{Local null controllability of a two-dimensional fluid-structure interaction problem},
		\newblock ESAIM Control Optim. Calc. Var. \textbf{14} (2008), pp. 1--42.

\bibitem{burq} (MR1618254)
		\newblock N. Burq,
		\newblock \emph{D\'ecroissance de l'\'energie locale de l'\'equation des ondes pour le probl\`eme ext\'erieur et absence de r\'esonance au voisinage du r\'eel},
		\newblock Acta Math. \textbf{180} (1998), 1--29.
		
\bibitem{CG} (MR2558423)
		\newblock J.M. Coron and S. Guerrero,
		\newblock \emph{Local null controllability of the two-dimensional Navier-Stokes system in the torus with a control force having a vanishing component},
		\newblock J. Math. Pures Appl. \textbf{(9) 92} (2009), no. 5, 528--545.

\bibitem{DZ} (MR2169126)
		\newblock R. D\'ager and E. Zuazua,
		\newblock ``Wave propagation, observation and control in $1-d$ flexible multi-structures,"
    \newblock Math�matiques \& Applications (Berlin) [Mathematics \& Applications], 50. Springer-Verlag, Berlin, 2006.

\bibitem{EN} (MR1721989)
		\newblock K.J. Engel and R. Nagel,
		\newblock ``One-Parameter Semigroups for Linear Evolution Equations,"
    \newblock Springer-Verlag, New York, 2000.

\bibitem{Ka} (MR1429548)
		\newblock B. Kapitonov,
		\newblock \emph{Stabilization and simultaneous boundary controllability for a pair of Maxwell�s equations},
		\newblock Mat. Apl. Comput. \textbf{15} (1996), no. 3, 213--225.
				
\bibitem{IT} (MR2317341)
		\newblock O. Imanuvilov and T. Takahashi,
		\newblock \emph{Exact controllability of a fluid-rigid body system},
		\newblock J. Math. Pures Appl. \textbf{87} (2007), pp. 408--437.

\bibitem{lebeauamorties} (MR1385677)
		\newblock G. Lebeau,
		\newblock \emph{Equation des ondes amorties},
		\newblock  in ``Algebraic and Geometric methods in mathematical physics" (Kaciveli, 1993). Math. Phys. Stud. 19, Kluwer Acad. Publ. Dordrecht (1996), 73--109.
		
\bibitem{LR} (MR2272158)
		\newblock P. Loreti and B. Rao,
		\newblock \emph{Optimal energy decay rate for partially damped systems by spectral compensation},
		\newblock SIAM J. Control Optim. \textbf{45} (2006), no. 5, 1612--1632.

\bibitem{Lu} (MR1329547)
		\newblock A. Lunardi,
		\newblock ``Analytic semigroups and optimal regularity in parabolic problems,"
    \newblock Birkh\"auser Verlag, Basel 1995.

\bibitem{Lu2} (MR2523200)
		\newblock A. Lunardi,
		\newblock ``Interpolation theory,"
    \newblock 2$^{nd}$ edition, Edizioni della Normale, Pisa, 2009.
    
\bibitem{Pa} (MR0710486)
		\newblock A. Pazy,
		\newblock ``Semigroups of Linear Operators and Applications to Partial Differential Equations,"
    \newblock Springer-Verlag, New York, 1983.

\bibitem{RV} (MR2593609)
		\newblock J.P. Raymond and M. Vanninathan,
		\newblock \emph{Null controllability in a fluid-solid structure model},
		\newblock J. Differential Equations \textbf{248} (2010), no. 7, 1826--1865.
		
\bibitem{Ru} (MR1209323)
		\newblock D. Russell,
		\newblock \emph{A general framework for the study of indirect damping mechanisms in elastic systems},
		\newblock J. Math. Anal. Appl. \textbf{173} (1993) 339--354.

\bibitem{Trie} (MR1328645)
		\newblock H. Triebel,
		\newblock ``Interpolation theory, function spaces, differential operators,"
    \newblock 2$^{nd}$ edition, Johann Ambrosius Barth, Heidelberg, 1995.

\bibitem{Youssef} 
		\newblock W. Youssef,
    \newblock  ``Contr\^ole et stabilisation de syst\`eme \'elastiques coupl\'es,"
    \newblock  Ph.D thesis, University Paul Verlaine-Metz, 2009.
		
\bibitem{ZZ} (MR2308404)
		\newblock X. Zhang and E. Zuazua,
		\newblock \emph{Asymptotic behavior of a hyperbolic-parabolic coupled system arising in fluid-structure interaction},
		\newblock in "Free boundary problems", Internat. Ser. Numer. Math., 154, Birkh\"auser, Basel, 2007, 445--455.
		
\end{thebibliography}
\end{document}